\documentclass[12pt,reqno]{amsart}
\usepackage[a4paper, textwidth=15cm, textheight=24cm, left=3.5cm, top=3cm, footskip=8mm]{geometry}

\usepackage{graphicx,cite}
\usepackage{mathrsfs,amssymb,amsmath}
%
\newcommand{\norm}[1]{\left\Vert#1\right\Vert}
\newcommand{\abs}[1]{\left\vert#1\right\vert}

\newcommand{\R}{\mathbb R}
\newcommand{\D}{\partial}
\newcommand{\eps}{\varepsilon}

\newcommand{\dv}{\mathrm{div}\,}
\newcommand{\B}{B}
\newcommand{\vv}{v}

\newcommand{\dm}{{\mathscr{D}_t}}
\newcommand{\idm}{{\mathscr{D}}}
\newcommand{\nb}{\nabla}
\newcommand{\bnb}{\overline{\nb}}
\newcommand{\tr}{\mathrm{tr}\,}
\newcommand{\curl}{\mathrm{curl}\,}
\newcommand{\dist}{\mathrm{dist}\,}
\newcommand{\ls}{\leqslant}
\newcommand{\gs}{\geqslant}
\newcommand{\N}{\mathcal{N}}

\newcommand{\vol}{\mathrm{Vol}\,}
\newcommand{\sgn}{\mathrm{sgn}}
\newcommand{\p}{P }
\newcommand{\bv}{\varsigma}
\newcommand{\no}{\nonumber}
\newcommand{\K}{\mathcal{K}}
\newcommand{\E}{\mathcal{E}}
\newcommand{\T}{\mathcal{T}}

\usepackage{ifpdf}
\ifpdf
\usepackage[CJKbookmarks=true,
         hyperindex=true,
         pdfstartview=FitH,
         bookmarksnumbered=true,
         bookmarksopen=true,
         colorlinks=true,
         citecolor=blue,
         linkcolor=blue,
         urlcolor=blue,
         pdfborder=001,
         pdfauthor={Chengchun Hao},
         pdftitle={},
         pdfkeywords={},
         ]{hyperref}
\else
\usepackage[
         hypertex,
         hyperindex,
         linkcolor=blue,
         unicode,
         citecolor=blue%
         ]{hyperref}
\fi

\allowdisplaybreaks
\numberwithin{equation}{section}
\vfuzz2pt 
\hfuzz2pt 
\usepackage{amsthm}
\newtheorem{theorem}{Theorem}[section]

\newtheorem{lemma}[theorem]{Lemma}

\theoremstyle{definition}
\newtheorem{definition}[theorem]{Definition}
\theoremstyle{remark}
\newtheorem{remark}[theorem]{Remark}


\begin{document}

\title[A priori estimates for free boundary incompressible MHD]{A Priori Estimates for\\ Free Boundary Problem of Incompressible\\ Inviscid Magnetohydrodynamic Flows}%

\author{Chengchun Hao}%
\address{Institute of Mathematics,
 Academy of Mathematics \& Systems Science,
 and Hua Loo-Keng Key Laboratory of Mathematics,
  Chinese Academy of Sciences,
   Beijing 100190, China}
\email{hcc@amss.ac.cn}
\thanks{Hao's work was partially supported by the National Natural Science Foundation of China (Grant No. 11171327) and the Youth Innovation Promotion Association, Chinese Academy of Sciences.}
\author{T. Luo}
\address{
Math. Sci. Center,  Tsinghua University,
  and Morningside Center, CAS,
   Beijing, China}
\email{tluo@math.tsinghua.edu.cn}
\date{July 12, 2012}


\begin{abstract}
   In the present paper, we prove the a priori estimates of Sobolev norms for a free boundary problem of the incompressible inviscid MHD equations in all physical spatial dimensions $n=2$ and $3$ by adopting a geometrical point of view used in \cite{CL00}, and estimating quantities such as the second fundamental form and the velocity of the free surface. We identify the well-posedness condition that the outer normal derivative of  the total pressure including the fluid  and magnetic pressures is negative on the free boundary, which is similar to the physical condition (Taylor sign condition) for the incompressible Euler equations of fluids.
\end{abstract}

\keywords{Incompressible inviscid magnetohydrodynamic flows, free boundary, a priori estimates}

\maketitle

\tableofcontents
\section{Introduction}

In the present paper, we consider the following incompressible inviscid magnetohydrodynamics (MHD) equations
\begin{subequations}\label{mhd1}
  \begin{align}
    &\vv_t+\vv\cdot\D \vv+\D p=\frac{1}{4\pi}\big(\B\cdot\D \B-\frac{1}{2}\D |\B|^2\big),  \quad \text{ in } \idm, \label{mhd1.1}\\
    &\B_t+\vv\cdot \D \B=\B\cdot \D\vv, \quad \text{ in } \idm \label{mhd1.2}\\
    &\dv \vv=0,  \quad \dv \B=0 , \quad \text{ in } \idm ,\label{mhd1.3}\end{align}
\end{subequations}
describing the motion  of conducting fluids in an electromagnetic field, where the velocity field of the fluids $\vv=(v_1,\cdots,v_n)$ ,  the magnetic field $\B=(B_1,\cdots,B_n)$  and the domain
$\idm\subset [0,  T]\times \R^n$ are the unknowns to be determined. Here $n\in\{2,3\}$ is the spatial dimension, $p$ is the fluid pressure, $\D=(\D_1,\cdots,\D_n)$ and $\dv$ are the usual gradient operator and spatial divergence. Given a simply connected bounded domain $\idm_0\subset \R^n$ and the initial data $\vv_0$ and $\B_0$ satisfying the constraints $ \dv \vv_0=0$ and $\dv \B_0=0$, we want to find a set $\idm\subset [0, T]\times \R^n$ and the vector fields $\vv$ and $\B$ solving \eqref{mhd1} and satisfying the initial conditions:
\begin{equation}\label{initialcondition}
\{x: (0, x)\in \idm\}=\idm_0, \quad  (\vv, \B)=(\vv_0, \B_0)  \text{ on } \{0\}\times \idm_0.
\end{equation}
Let $\idm_t=\{x\in\R^n: (t, x)\in \idm\}$, we also require the following boundary conditions on the free boundary $ \D\dm$:
\begin{subequations}\label{mhd1'}
  \begin{align}
  &\vv_{\N}=\kappa \;\text{ on }   \D\idm_t, \label{mhd1.5'}\\
   &p=0 \;\text{ on }  \D\idm_t, \label{mhd1.5}\\
 &|\B|=\bv \;\text{ and }\;   \B\cdot \N=0 \;\text{ on } \D\idm_t,\label{mhd1.4}
   \end{align}\end{subequations}
for each $t\in [0, T]$, where $\N$ is the exterior unit normal to $\D\dm$,  $\vv_{\N}=\sum_{i=1}^{n}\N^i v_i$, and $\kappa$ is the normal velocity of   $\D\idm_t,$  $\bv$ is a non-negative constant.
Condition \eqref{mhd1.4} should be understood as the constraints on the initial data. Indeed, we will verify that the condition $\B\cdot \N=0$ on $\D\idm_t$
holds for all $t\in [0, T]$ if it holds initially.  We remark here some physical meaning of the boundary conditions. Condition \eqref{mhd1.5'} means that the boundary of $\idm_t$
moves with the fluids, \eqref{mhd1.5} means that outside the fluid region $\idm_t$ is the vacuum, the condition $\B\cdot \N=0 \ \text{ on } \D\idm_t$ comes from the assumption
that the  boundary $\partial\idm$ is a perfect conductor.  Indeed, if we use ${\bf E}$ to denote the electric field induced by the magnetic field $B$, then the boundary condition  $\B\cdot \N=0$ on $\D\idm_t$ gives rise to ${\bf E}\times \N=0$ on $\D\idm_t$. The boundary condition $ |\B|=const$ on $\D\idm_t$ (the magnetic strength is constant on the boundary) is needed to guarantee that the total energy of the system is conserved, i.e.,
$$\frac{d}{dt} \int_{\idm_t}\left( \frac{1}{2}|v|^2+\frac{1}{2}|\B|^2\right)(t,x)dx=0.$$
Condition \eqref{mhd1.4} includes the widely used (e.g., \cite{HW08}) zero magnetic field boundary condition as the special case, but it is much more general and physically
reasonable.

In the  classical plasma-vacuum interface problem (cf. \cite{GP,tr1}), suppose that the interface between the plasma
region $\Omega_{p}(t)$ and the vacuum region $\Omega_v(t)$ is $\Gamma(t)$ which moves with the plasma, then it requires that \eqref{mhd1} holds in the plasma region $\Omega_{p}(t)$, while in the vacuum region $\Omega_v(t)$, the vacuum magnetic field $\mathscr{B}$ satisfies
\begin{equation}\label{vacuum magnetic}
\nabla\times \mathscr{B}=0, \quad \nabla\cdot \mathscr{B}=0.
\end{equation}
On the interface $\Gamma(t)$, it holds that
\begin{equation}\label{interface}
p=0,\quad  |B|=|\mathscr{B}|,\quad  B\cdot \N=\mathscr{B}\cdot \N=0,
\end{equation}
where $\N$ is the unit normal to $\Gamma(t)$. Therefore, the boundary conditions in \eqref{mhd1'} also model the plasma-vacuum
problem for the case when $|\mathscr{B}|$ is constant.

 We will prove a priori bounds for the  free boundary problem \eqref{mhd1}, \eqref{initialcondition} and \eqref{mhd1'} in Sobolev spaces under the following condition
\begin{align}\label{eq.phycond}
  \nb_\N \left(p+\frac{1}{8\pi}|B|^2\right)\ls -\eps<0 \text{ on } \D\dm,
\end{align}
where $\nb_\N=\N^i\D_i$.
We  assume that this condition holds initially, and will verify that it holds true for some time. For the free boundary problem of motion of incompressible fluids in vacuum, without magnetic fields, the natural physical condition (cf. \cite{bealhou,CL00,Coutand,L1,L2,LN,Ebin,SZ,wu1, wu2,zhang}) reads that
\begin{align}\label{eq.phycond00}
  \nb_{\N} p\ls -\eps<0 \text{ on } \D\idm_t,
\end{align}
which excludes the possibility of the Rayleigh-Taylor type instability (see \cite{Ebin}). In this paper,  we
find that the natural physical condition is \eqref{eq.phycond} when the equations of magnetic field couple with the fluids equation. In fact, the quantity $p+\frac{1}{8\pi}|B|^2$,  the total pressure of the system, will play an important role in our analysis. Roughly speaking, the velocity tells the boundary where to move, and the boundary is the level set of the total pressure that determines the acceleration.

The free surface problem of the incompressible Euler equations of fluids has attracted much attention in the recent decades.  Important progress has been made for flows with or without vorticity, with or without surface tension. We refer readers to \cite{AM,CL00,Coutand,L1,L2,LN,Ebin,SZ,wu1,wu2,zhang}.

On the other hand, there have been only few results on the interface problems for the MHD equations. This is due to the difficulties caused by the strong coupling between
the velocity fields and magnetic fields. In this direction,  the well-posedness of a linearized compressible plasma-vacuum interface problem was investigated in \cite{tr1},  and a stationary problem was studied in \cite{FL95}. The current-vortex
sheets problem was studied in \cite{chenwang} and \cite{tr2}. For the incompressible viscous MHD equations, a free boundary problem  in a simply connected domain of $\R^3$ was studied by a linearization technique and the construction of a sequence of successive approximations in \cite{PS10} with an irrotational condition for magnetic fields in a part of the domain.

In this paper, we prove the a priori estimates for the free boundary problem \eqref{mhd1}, \eqref{initialcondition} and \eqref{mhd1'} in all physical spatial dimensions $n=2,3$ by adopting a geometrical point of view used in \cite{CL00}, and estimating quantities such as the second fundamental form and the velocity of the free surface. Throughout the paper, we use the Einstein summation convention, that is, when an index variable appears twice in a single term it implies summation of that term over all the values of the index. Denote the material derivative $D_t=\D_t+\vv\cdot\D$ and the total pressure $P=p+\frac{1}{8\pi}|B|^2$, we can write the free boundary problem as
\begin{subequations}\label{mhd3}
\begin{align}
    &D_t v_j+\D_j\p =\frac{1}{4\pi}B^k\D_k B_j\; \text{ in } \idm, \quad \label{mhd3.1}\\
    &D_t B_j=B^k\D_k v_j\; \text{ in } \idm, \quad \label{mhd3.2}\\
    &\D_jv^j=0 \text{ in } \idm; \quad \D_jB^j=0 \text{ on } \{t=0\}\times \idm_0,\label{mhd3.3}\\
    &v_{\N}=\kappa\; \text{ on } [0,T]\times\D\idm_t, \label{mhd3.11}\\
    &|\B|=\bv \;\text{ on } \D\idm,  \quad\B_j \N^j=0\; \text{ on } \{t=0\}\times\D\idm_0,\label{mhd3.4}\\
    &p=0 \;\text{ on } \D\idm,\label{mhd3.5}\\
    & \nabla_{\N}P<0 \;\text{ on } \{t=0\}\times\D\idm_0.\label{mhd3.5'}
\end{align}
\end{subequations}
We will derive the energy estimates from which the Sobolev norms of $H^s(\idm_t)$ ($s\ls n+1$) of solutions will be derived. For this purpose, we define the energy norms as follows:
The zeroth-order energy, $E_0(t)$, is defined as  the total energy of the system,  i.e.,
\begin{equation}\label{een0}
E_0(t)=\int_{\idm_t} \delta^{ij}(v_iv_j+\frac{1}{8\pi}B_iB_j)dx,\end{equation}
which is conserved,
i.e.,
\begin{equation}\label{een0}
E_0(t)=E_0(0), \qquad \text{ for } 0\ls t\ls T.
\end{equation}
The higher order energy norm has a boundary part and an interior part. The boundary part controls the norms of the second fundamental form of the free surface, the interior part controls the norms of the velocity, magnetic fields and hence the pressure. We will prove that the time derivatives of the energy norms are controlled by themselves. A crucial point in the construction of the higher order energy norms is that the time derivatives of the interior parts will, after integrating by parts, contribute some boundary terms that cancel
the leading-order  terms in the corresponding time derivatives of the boundary integrals.   To this end,  we need to project the equations  for the total pressure $P=p+\frac{1}{8\pi}|B|^2$ to the tangent space of the boundary.  The orthogonal projection $\Pi$ to the tangent space of the boundary of a $(0, r)$ tensor $\alpha$ is defined to be the projection of each component along the normal:
\begin{equation}\label{projection}
(\Pi \alpha)_{i_1\cdots i_r}=\Pi_{i_1}^{j_1}\cdots \Pi_{i_r}^{j_r} \alpha_{j_1\cdots j_r},  \quad \text{ where } \Pi_i^j=\delta_i^j-{\N}_i{\N}^j, \end{equation}
with ${\N}^j=\delta^{ij} {\N}_i={\N}_j$.

Let $\bar \partial_i=\Pi_i^j \partial_j$ be a tangential derivative. If $q=const$ on $\partial \idm_t$, it follows that $\bar \partial_i q=0$ there and
\begin{equation}\label{esff1}
(\Pi \partial^2 q)_{ij}=\theta_{ij} \nabla_{\N} q,
\end{equation}
where ${\theta}_{ij}=\bar \partial_i \N_j$ is the second fundamental form of $\partial \idm_t$.
The higher order energies are defined as:
For $r\ge 1$
\begin{align}\label{ereen}
E_r(t)=&\int_{\idm_t}\delta^{ij}\left(Q(\D^r v_i, \D^r v_j)+\frac{1}{4\pi}Q(\D^r B_i, \D^r B_j)\right)dx\notag\\
       &+\int_{\idm_t}\left(|\D^{r-1}\curl v|^2+\frac{1}{4\pi}|\D^{r-1}\curl B|^2\right)dx\notag\\
       &+I(r)\int_{\D\idm_t}Q(\D^r P, \D^r P)\vartheta dS,
\end{align}
where  $I(r)=0$ if $r=1$ and $I(r)=1$ for $r>1$, so we do not need the boundary integral for $r=1$,
$$\vartheta=(-\nabla_{\N} P)^{-1}.$$
Here  $Q$ is a positive definite quadratic form which,
when restricted to the boundary, is the inner product of the tangential components $Q(\alpha, \beta)=\langle\Pi \alpha, \Pi \beta\rangle$  and in the interior $Q(\alpha, \alpha)$ increases to the norm $|\alpha|^2$. To be more specific, let
\begin{equation}\label{quadratic form}
Q(\alpha, \beta)=q^{i_1j_1}\cdots q^{i_rj_r}\alpha_{i_1\cdots i_r}\beta_{j_1\cdots j_r}
\end{equation}
where
\begin{equation}
q^{ij}=\delta^{ij}-\eta(d)^2\N^i\N^j, \quad d(x)=\dist(x, \D\idm_t), \quad \N^i=-\delta^{ij}\D_j d. \end{equation}
Here  $\eta$ is a smooth cutoff function satisfying $0\ls \eta(d)\ls 1, \quad \eta(d)= 1$ when
$d < d_0/4$ and $\eta(d)=0$ when $d > d_0/2. $ $d_0$  is a fixed number that is smaller than
the injectivity radius of the normal exponential map $\iota_0$, defined to be the largest
number $\iota_0$ such that the map
\begin{equation}
 \D\idm_t\times (-\iota_0, \iota_0)\to \{x\in \R^n:  \dist(x,\  \D\idm_t) < \iota_0\}\end{equation}
 given by
$$(\bar x, \iota)\to x=\bar x+\iota \N(\bar x)$$
is an injection.

The main theorems in this paper are as follows:
 \begin{theorem}\label{thm.1energy'}
  For any smooth solution of the free boundary problem \eqref{mhd3} for $0\ls t\ls T$ satisfying
  \begin{align}
    |\D \p|\ls M, \quad |\D v|\ls& M,  &&\text{in } \idm_t,\\
    |\theta|+|\D v|+\frac{1}{\iota_0}\ls &K,&&\text{on }  \D\idm_t, \label{eq.1energy1.1}
  \end{align}
  we have for $t\in[0,T]$
  \begin{align}
    E_1(t)\ls 2e^{CMt}E_1(0)+C K^2\left(\vol  \idm_t + E_0(0)\right)\left(e^{CMt}-1\right),
  \end{align}
  for some positive constants $C$ and $M$.

\end{theorem}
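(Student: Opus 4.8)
The plan is to differentiate $E_1(t)$ in time and, after a few integrations by parts, to split $\frac{d}{dt}E_1$ into three kinds of contributions: terms proportional to $E_1$ with coefficient $\ls CM$, genuinely lower-order terms bounded by $CK^2(\vol\idm_t+E_0)$, and top-order terms that cancel one another; then Gronwall's inequality together with the conservation of $\vol\idm_t$ and $E_0$ gives the estimate. Concretely, since $\dv v=0$ the transport theorem gives $\frac{d}{dt}\int_{\idm_t}f\,dx=\int_{\idm_t}D_tf\,dx$. Applying this to each of the four pieces of $E_1$, and using $[D_t,\D_i]=-(\D_iv^k)\D_k$ together with \eqref{mhd3.1}--\eqref{mhd3.2}, one gets $D_t\D_lv_j=-\D_l\D_jP+\frac1{4\pi}\D_l(B^m\D_mB_j)-(\D_lv^m)\D_mv_j$, $D_t\D_lB_j=\D_l(B^m\D_mv_j)-(\D_lv^m)\D_mB_j$, the corresponding transport equations for $\curl v$ and $\curl B$, and terms carrying $D_tq^{kl}$ (note $D_t(dx)=0$). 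Discarding every term that already contains a factor $\D v$, $\curl v$, $\D q^{kl}$ or $D_tq^{kl}$ --- these are handled in the last step --- the dangerous top-order part of $\frac{d}{dt}E_1$ is the sum of (i) $-2\int_{\idm_t}q^{kl}\delta^{ij}(\D_kv_i)\D_l\D_jP\,dx$; (ii) $\frac2{4\pi}\int_{\idm_t}q^{kl}\delta^{ij}\bigl[(\D_kv_i)B^m\D_l\D_mB_j+(\D_kB_i)B^m\D_l\D_mv_j\bigr]\,dx$, coming from the Lorentz terms $B^m\D_mB$ and $B^m\D_mv$ in \eqref{mhd3.1}--\eqref{mhd3.2}; and (iii) $\frac1{4\pi}\int_{\idm_t}B^m\D_m(\omega^{ij}\omega^B_{ij})\,dx$, where $\omega_{ij}=\D_iv_j-\D_jv_i$ and $\omega^B_{ij}=\D_iB_j-\D_jB_i$ are the two vorticities.

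For (i) I would integrate by parts in the variable $x^j$, i.e. move the derivative $\D_j$ that is contracted with the upper index of $\D_kv_i$: the interior second-order term vanishes because $\D_j\D_kv^j=\D_k(\dv v)=0$, and the boundary term is a multiple of $\int_{\D\idm_t}q^{kl}\N_j(\D_kv^j)\D_lP\,dS$, which also vanishes since near $\D\idm_t$ one has $q^{kl}=\Pi^{kl}$ (where $\eta(d)\equiv1$) while $P\equiv\frac1{8\pi}\bv^2$ is constant on $\D\idm_t$, so $q^{kl}\D_lP=\bar\partial^kP=0$ there. Hence (i) collapses to $2\int_{\idm_t}(\D_jq^{kl})(\D_kv^j)\D_lP\,dx$, a term carrying a $\D q$ factor, which is lower order.

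For (ii) I would relabel the dummy indices so that the two summands combine, and integrate by parts along $B$, i.e. move $B^m\D_m$; the boundary term vanishes because $\B\cdot\N=0$ on $\D\idm_t$, and since $\dv B=0$ one finds that (ii) equals $-\frac2{4\pi}\int_{\idm_t}(B^m\D_mq^{kl})\delta^{ij}(\D_kv_i)(\D_lB_j)\,dx$, again carrying a $\D q$ factor. For (iii), the same two facts give $\frac1{4\pi}\int_{\idm_t}B^m\D_m(\omega^{ij}\omega^B_{ij})\,dx=-\frac1{4\pi}\int_{\idm_t}(\dv B)\,\omega^{ij}\omega^B_{ij}\,dx+\frac1{4\pi}\int_{\D\idm_t}(\B\cdot\N)\,\omega^{ij}\omega^B_{ij}\,dS=0$. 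These are the analogue, for the coupled system, of the cancellation between interior and boundary energies used in \cite{CL00}: here the top-order magnetic terms cancel between the velocity field and the magnetic field --- between the two gradient energies in (ii), and between the two vorticity energies in (iii).

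After (i)--(iii), every surviving term is a product of first-order quantities with at least one pointwise-bounded factor: $\D v$ or $\curl v$ (bounded by $CM$), $\D P$ (bounded by $M$ via \eqref{mhd3.1}), or $\D q^{kl}$, $D_tq^{kl}$, for which $\|\D q\|_{L^\infty(\idm_t)}+\|D_tq\|_{L^\infty(\idm_t)}\ls CK$ by \eqref{eq.1energy1.1} (the Hessian of the distance function on the collar $\{d<d_0\}$ being controlled by $|\theta|$ and $\iota_0^{-1}$, and $D_tq^{kl}$ being of the form $\N^{(k}\zeta^{l)}$ with $\zeta$ built from $\D v$, the curvature, and $v$, the latter bounded since $v$ is Lipschitz with $|\D v|\ls M$ and $\|v\|_{L^2}^2\ls CE_0$); moreover $\|B\|_{L^\infty(\idm_t)}$ stays bounded on $[0,T]$ because $D_t|B|^2=2B_jB^m\D_mv_j\ls2M|B|^2$. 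Estimating one factor in $L^\infty$ and the rest by Cauchy--Schwarz --- using $\|g\|_{L^1(\idm_t)}\ls(\vol\idm_t)^{1/2}\|g\|_{L^2(\idm_t)}$ on the leftover $\D v$ or $\D B$ whenever a $K$-factor is present, and then Young's inequality to absorb a term $\tfrac12ME_1$ --- one arrives at $\frac{d}{dt}E_1(t)\ls CM\,E_1(t)+CK^2(\vol\idm_t+E_0(0))$, whence Gronwall's inequality together with $\vol\idm_t=\vol\idm_0$ and $E_0(t)=E_0(0)$ yields the claimed bound. I expect the main obstacle to be precisely the bookkeeping behind (i)--(iii): one must choose the integrations by parts so that no genuinely uncontrolled object survives --- second derivatives of $v$ and $B$, and $\curl B$, which is not bounded in $L^\infty$ under these hypotheses --- and this works only thanks to the identities $\dv v=0$ with $\bar\partial P=0$ on $\D\idm_t$, $\B\cdot\N=0$ on $\D\idm_t$, and $\dv B=0$; a secondary nuisance is the uniform control of the space- and material-derivatives of the geometric cutoff weights $q^{kl}$.
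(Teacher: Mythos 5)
Your proposal follows, in Eulerian coordinates, essentially the same scheme the paper executes after passing to Lagrangian coordinates (Theorem \ref{thm.1energy}): your cancellations (i)--(iii) are exactly the paper's. The Hessian-of-$P$ term is turned into a divergence whose interior part dies by $\dv v=0$ and whose boundary part dies because $q^{kl}\D_l P$ is the tangential gradient of the constant boundary value of $P$ (this is why \eqref{energy.1.2} vanishes); the two second-order magnetic contributions to the gradient energies combine, by the symmetry of $q^{kl}\delta^{ij}$, into a derivative along $B$ that is integrated away using $\dv B=0$ and $\B\cdot\N=0$, leaving only a $\D q$ (in the paper, $\nb\gamma\sim\theta$) remainder, which is the paper's \eqref{energy.1.3}; and the curl energies combine into $B\cdot\D(\omega\!:\!\omega^B)$, the analogue of \eqref{energy.1.9}. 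So the skeleton is sound and matches the paper; the Lagrangian/covariant setting versus your $q^{kl}$-with-cutoff setting is mostly a difference of presentation. (Do note that in \eqref{mhd3} the conditions $\dv B=0$ and $\B\cdot\N=0$ are imposed only at $t=0$; their propagation in time, which you use freely, is proved in the paper's Section 3.)

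There are, however, two places where your closing estimates, as written, would not give the stated inequality. First, for the remainder $\int_{\idm_t}(B^m\D_m q^{kl})\,\delta^{ij}(\D_k v_i)(\D_l B_j)\,dx$ you plan to invoke $\norm{B}_{L^\infty(\idm_t)}$, propagated by Gronwall from $\norm{B_0}_{L^\infty}$. Estimating this term as $CK\norm{B}_{L^\infty}\norm{\D v}_{L^2}\norm{\D B}_{L^2}$ puts $K\norm{B}_{L^\infty}$ into the Gronwall coefficient and makes the final constants depend on $\norm{B_0}_{L^\infty}$, which is not among the data allowed in the conclusion (only $C$, $M$, $K$, $\vol\idm_t$, $E_0(0)$, $E_1(0)$ may enter, and the exponent must be $CMt$). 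The paper instead bounds it by $CK\norm{\D v}_{L^\infty(\idm_t)}\norm{B}_{L^2(\idm_t)}\norm{\D B}_{L^2(\idm_t)}\ls CKM\,E_0^{1/2}(0)E_1^{1/2}(t)$, using the conservation of $E_0$; this is precisely how $E_0(0)$ enters the stated bound, and it makes your $L^\infty$ bound on $B$ unnecessary. Second, your claim $\norm{D_t q}_{L^\infty}\ls CK$ ``with $\zeta$ built from $\D v$, the curvature, and $v$'' is too rough: if $v$ itself appears you only get $\norm{D_tq}_{L^\infty}\lesssim K\norm{v}_{L^\infty}+M$, and again the exponent is no longer $CMt$. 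The paper avoids this by working with the co-moving metric, for which $D_t\gamma^{ab}=-2\gamma^{ac}h_{cd}\gamma^{db}$ with $h_{cd}=\frac12 D_tg_{cd}$ controlled pointwise by $\abs{\D v}\ls M$; in your Eulerian setting the fix is to note that $D_t d=\N(\bar x)\cdot\bigl(v(\bar x)-v(x)\bigr)$ involves only velocity differences across the collar, so $\abs{D_t d}\ls Md$ and hence $\norm{D_tq}_{L^\infty}\ls CM$ (with constants absorbing $Kd_0\lesssim 1$), not merely $CK$. With these two repairs your argument coincides with the paper's proof.
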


\begin{theorem}\label{thm.renergy'}
  Let $r\in \{2,\cdots,n+1\}$, then there exists a $T>0$ such that the following holds: For any smooth solution of the free boundary problem \eqref{mhd3} for $0\ls t\ls T$ satisfying
  \begin{align}
   |B|\ls& M_1 \quad\text{for } r=2,&&\text{in } \idm_t,\label{eq.2energy81'}\\
   \quad |\D \p|\ls M, \quad |\D v|\ls& M, \quad |\D B|\ls M,  &&\text{in } \idm_t,\label{eq.2energy8'}\\
    |\theta|+1/\iota_0\ls &K,&&\text{on }  \D\idm_t,\label{eq.2energy9'}\\
    -\nb_\N \p\gs \eps>&0, &&\text{on } \D\idm_t,\label{eq.2energy91'}\\
    |\D^2\p|+|\nb_\N D_t\p|\ls& L,&&\text{on } \D\idm_t,\label{eq.2energy92'}
  \end{align}
  we have, for $t\in[0,T]$,
  \begin{align}\label{eq.renergy'}
  E_r(t)\ls e^{C_1t}E_r(0)+C_2\left(e^{C_1t}-1\right),
\end{align}
where the positive constants $C_1$ and $C_2$ depend on $K$, $K_1$, $M$, $M_1$, $L$, $1/\eps$, $\vol \idm_t$, $E_0(0)$, $E_1(0)$, $\cdots$, and $E_{r-1}(0)$.
\end{theorem}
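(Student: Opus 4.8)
The plan is to differentiate $E_r(t)$ in time, prove a differential inequality $\frac{d}{dt}E_r(t)\ls C_1E_r(t)+C_2$, and conclude \eqref{eq.renergy'} by Gronwall's inequality; the time $T>0$ is then dictated by the requirement that the a priori quantities in the hypotheses persist, in particular the sign $-\nb_\N\p\gs\eps>0$, which keeps $\vartheta=(-\nb_\N\p)^{-1}$ well defined. The argument adapts the geometric scheme of \cite{CL00}, with the total pressure $\p$ playing the role the fluid pressure plays in the vacuum Euler problem. Two structural facts are used throughout: (i) since $p=0$ and $|B|=\bv$ on $\D\idm_t$, we have $\p=\bv^2/(8\pi)$, \emph{constant} on $\D\idm_t$, hence $\bar\partial_i\p=0$ and $D_t\p=0$ there and \eqref{esff1} holds with $q=\p$; (ii) $\dv B=0$ and $B\cdot\N=0$ are propagated by the flow (cf.\ the Introduction), so $B^k\D_k$ is skew-adjoint on $L^2(\idm_t)$ modulo terms where a derivative lands on a coefficient.

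I would first record the elliptic equation for $\p$: taking $\D^j$ of \eqref{mhd3.1} and using $\D^jv_j=0$, $\D^jD_tv_j=(\D^jv^k)(\D_kv_j)$, and $\D^j(B^k\D_kB_j)=(\D^jB^k)(\D_kB_j)$ (as $\dv B=0$), one obtains
\begin{equation*}
  \Delta\p=-(\D^jv^k)(\D_kv_j)+\frac{1}{4\pi}(\D^jB^k)(\D_kB_j)\ \text{ in }\idm_t,\qquad \p=\frac{\bv^2}{8\pi}\ \text{ on }\D\idm_t,
\end{equation*}
so that, with \eqref{eq.2energy91'}, \eqref{eq.2energy92'} and elliptic regularity, $\|\p\|_{H^{r+1}(\idm_t)}$ and $\vartheta$ with its tangential and material derivatives on $\D\idm_t$ are bounded by $E_r(t)^{1/2}$ plus constants of the type allowed in $C_2$. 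For the interior part of $E_r$ I would commute $\D^r$ through $D_t$ (using $[D_t,\D_i]=-(\D_iv^k)\D_k$) and through $B^k\D_k$: the top-order commutators are of the schematic form $(\D v)\D^r(\cdot)$ or $(\D B)\D^r(\cdot)$ and are absorbed into $C_1E_r$ by \eqref{eq.2energy8'}, while the rest carry at most $\D^{r-1}$ of $v,B,\p$ and are controlled by Sobolev embedding ($n\ls3$) in terms of $E_0,\dots,E_{r-1}$ — this is precisely where, for $r=2$, the bound \eqref{eq.2energy81'} on $|B|$ is needed, $E_2$ not controlling $\|B\|_{L^\infty}$ on its own. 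Substituting \eqref{mhd3.1}, \eqref{mhd3.2}, the half time-derivative of the $Q$-part of the interior energy becomes, up to such absorbable terms,
\begin{equation*}
  -\int_{\idm_t}\delta^{ij}Q(\D^rv_i,\D_j\D^r\p)\,dx+\frac{1}{4\pi}\int_{\idm_t}\delta^{ij}\left[Q(\D^rv_i,B^k\D_k\D^rB_j)+Q(\D^rB_i,B^k\D_k\D^rv_j)\right]dx.
\end{equation*}
Integration by parts in $\D_k$ in the second integral produces a boundary term carrying the factor $\N_kB^k=0$ and an interior term in which, after $\D_kB^k=0$, the two pieces cancel up to terms with $\D_k$ on the $q^{ij}$'s (bounded by $\|B\|_{L^\infty}$ times geometric quantities times $E_r$). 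Integration by parts in $\D_j$ in the first integral leaves an interior term containing $\delta^{ij}\D_j\D^rv_i=\D^r(\dv v)=0$ together with $\D_jq$-terms, plus the boundary term $-\int_{\D\idm_t}\N^iQ(\D^rv_i,\D^r\p)\,dS$.

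For the curl part, the gradient $\D\p$ drops out under $\curl$, and $\curl v$, $\curl B$ satisfy a coupled transport system of the form $D_t\curl v=\frac{1}{4\pi}B^k\D_k\curl B+(\text{quadratic in }\D v,\D B)$ and $D_t\curl B=B^k\D_k\curl v+(\text{quadratic in }\D v,\D B)$; differentiating by $\D^{r-1}$ and using again the skew-adjointness of $B^k\D_k$, \eqref{eq.2energy8'} and \eqref{eq.2energy81'}, this part of $\frac{d}{dt}E_r$ is bounded by $C_1E_r+C_2$. Combined with the Hodge-type estimate bounding $\|\D^rv\|_{L^2(\idm_t)}^2+\|\D^rB\|_{L^2(\idm_t)}^2$ by $E_r$ plus lower-order energies (using $\dv v=\dv B=0$, the $Q$- and curl-parts of $E_r$, and \eqref{eq.2energy9'} for the geometry), all of the above is re-expressed through $E_0,\dots,E_r$.

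It remains to cancel the boundary term $-\int_{\D\idm_t}\N^iQ(\D^rv_i,\D^r\p)\,dS$ against $\frac{d}{dt}$ of $I(r)\int_{\D\idm_t}Q(\D^r\p,\D^r\p)\vartheta\,dS$. Using the transport formula for surface integrals (normal speed $\kappa=v_\N$, lower-order terms bounded via \eqref{eq.2energy8'}), the bounds on $D_t\vartheta$ from the elliptic estimate above, and \eqref{eq.2energy9'} for $\theta$, the leading term of the latter is $I(r)\int_{\D\idm_t}\vartheta\,Q(\D^r\p,\Pi D_t\D^r\p)\,dS$. Commuting $D_t$ past $\D^r$, projecting tangentially, and using the material-derivative formula for $\Pi$ together with $D_t\p=0$, $\bar\partial\p=0$ on $\D\idm_t$, the identity $\D_j\p=-D_tv_j+\frac{1}{4\pi}B^k\D_kB_j$ with $B\cdot\N=0$, and \eqref{esff1} to convert surplus normal derivatives into second-fundamental-form factors, one finds this leading term equals $+\int_{\D\idm_t}\N^iQ(\D^rv_i,\D^r\p)\,dS$ modulo terms bounded by $C_1E_r+C_2$; the two boundary contributions cancel. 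Assembling everything gives $\frac{d}{dt}E_r\ls C_1E_r+C_2$ with $C_1,C_2$ of the stated dependence, and Gronwall finishes the proof. I expect the main obstacle to be exactly this last cancellation: the commutators $[D_t,\D^r]$ and $[\Pi,\D^r]$ on the \emph{moving} boundary must be organized so that the leading tangential part of $\frac{d}{dt}\int_{\D\idm_t}Q(\D^r\p,\D^r\p)\vartheta\,dS$ reproduces $\int_{\D\idm_t}\N^iQ(\D^rv_i,\D^r\p)\,dS$ with no uncompensated order-$r$ remainder (such a remainder would break the closure), and it is here that the constancy of $\p$ on $\D\idm_t$ and the identity \eqref{esff1} are indispensable; a secondary difficulty is propagating the magnetic-field bounds that tame the $v$--$B$ coupling in the base case $r=2$.
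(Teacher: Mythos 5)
Your plan is essentially the paper's proof, merely phrased in Eulerian rather than Lagrangian coordinates: the same energy, the same cancellation of the boundary term produced by integrating the pressure term by parts against the top commutator term in $D_t\D^r P$ (using $\bar\partial P=0$, so $\D P=\N\,\nabla_{\N}P=-\vartheta^{-1}\N$ on $\D\idm_t$), the same antisymmetry/$\dv B=0$/$B\cdot\N=0$ mechanism killing the magnetic coupling in both the $Q$-part and the curl part, and the same elliptic/projection machinery with Gronwall, so it is correct in outline. Two caveats on your intermediate claims: the energies yield only $\|\D^r P\|_{L^2(\idm_t)}$ (a bound on $\|P\|_{H^{r+1}(\idm_t)}$ is neither available from $E_r$ nor needed), and the leftover top-order boundary term $\Pi\D^r D_tP$ does not follow from the estimate for $P$ itself --- it requires a separate elliptic estimate based on computing $\Delta D_tP$ from the equations and on $D_tP=0$ on $\D\idm_t$ (so the projection estimate applies), together with the $\bar\D^k\theta$ bounds, which is where the bulk of the paper's Section 5 is spent.
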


Most of the a priori  bounds \eqref{eq.2energy81'}-\eqref{eq.2energy92'}  can be obtained from the energy norms by the elliptic estimates which are used to control all components of $\D^r v$, $\D^r B$ and $\D^r p$ from the tangential components $\Pi \D^r P$ in the energy norms, and
 a bound for the second fundamental form of the free boundary
$$\|\bar \D^{r-2}\theta\|_{L^2(\D\idm_t)}\ls C\left( K, L, M, \frac{1}{\epsilon}, E_{r-1}, \vol\idm_t\right)E_r$$
for $r\ge 2$, which controls the regularity of the free boundary.

Since $E_0(t)=E_0(0)$ and $\vol \idm_t=\vol \idm_0$, recursively we can prove the following main theorem from Theorems \ref{thm.1energy'}-\ref{thm.renergy'}.

\begin{theorem}
Let \begin{align}
  \K(0)=&\max\left(\norm{\theta(0,\cdot)}_{L^\infty(\D\idm_0)}, 1/\iota_0(0)\right),\label{eq.K'}\\
  \E(0)=&\norm{1/(\nb_N P(0,\cdot))}_{L^\infty(\D\idm_0)}=1/\eps(0)>0. \label{eq.E'}
\end{align}
   There exists a continuous function $\T>0$ such that if
  \begin{align}
    T\ls \T(\K(0),\E(0),E_0(0),\cdots, E_{n+1}(0),\vol \idm_0 ),
  \end{align}
 then any smooth solution of the free boundary problem for MHD equations \eqref{mhd3} for $0\ls t\ls T$ satisfies
  \begin{align}
    \sum_{s=0}^{n+1} E_s(t)\ls 2\sum_{s=0}^{n+1} E_s(0), \quad 0\ls t\ls T.
  \end{align}
\end{theorem}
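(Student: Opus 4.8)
The plan is a continuity (bootstrap) argument on $[0,T]$, driven by Theorems~\ref{thm.1energy'}--\ref{thm.renergy'} and recovering the pointwise a priori bounds $M,M_1,L,K,1/\eps$ — which are \emph{not} hypotheses of this theorem — from the energies themselves, using the elliptic estimates announced after Theorem~\ref{thm.renergy'}, Sobolev embedding, and the conservation laws $E_0(t)=E_0(0)$, $\vol\idm_t=\vol\idm_0$. Write $S(t):=\sum_{s=0}^{n+1}E_s(t)$, choose a constant $K$ — large, depending only on $\K(0)$ and on the elliptic and Sobolev constants attached to $\K(0),\E(0),S(0),\vol\idm_0$ — and set $\eps:=1/(2\E(0))$. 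Let $T_*$ be the supremum of those $t\in[0,T]$ such that at every $\tau\in[0,t]$ one has
\[
S(\tau)\ls 2S(0),\qquad \|\theta(\tau,\cdot)\|_{L^\infty(\D\idm_\tau)}+1/\iota_0(\tau)\ls K,\qquad -\nb_\N P(\tau,\cdot)\gs\eps .
\]
Dismissing the trivial case $E_0(0)=0$ (which forces $v_0\equiv B_0\equiv 0$ and $S\equiv 0$), all three inequalities are strict at $t=0$, so $T_*>0$ by continuity in $t$ of the smooth solution. The whole content is then to prove that, for $K$ chosen large enough and $\T$ small enough, these three inequalities remain \emph{strict} on all of $[0,T_*]$; this rules out $T_*<T$ and yields $S(t)\ls 2S(0)$ on $[0,T]$, as claimed.

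First I would exploit the definition of $T_*$ to recover the pointwise bounds on $[0,T_*]$. There the control of $\theta$ and $1/\iota_0$ keeps the geometry of $\D\idm_t$ uniform and $-\nb_\N P\gs\eps$ keeps $\vartheta$ bounded, so the elliptic estimates upgrade $S\ls 2S(0)$ to $\|v\|_{H^{n+1}(\idm_t)}+\|B\|_{H^{n+1}(\idm_t)}+\|p\|_{H^{n+1}(\idm_t)}\ls C$ with $C$ a function of the data alone. Combining this with Sobolev embedding, trace theorems, the equations \eqref{mhd3.1}--\eqref{mhd3.2}, and the fact that $P$ is constant on $\D\idm_t$ — which reduces $\D^2P$ there to $\theta$, $\nb_\N P$ and normal derivatives that the equations recover, the place where the estimates are most delicate when $n=2$ and one must use the precise structure rather than a soft embedding — one obtains $|B|\ls M_1$, $|\D v|+|\D B|+|\D P|\ls M$ and $|\D^2P|+|\nb_\N D_tP|\ls L$ on $[0,T_*]$, with $M_1,M,L$ depending only on the data; together with $-\nb_\N P\gs\eps$ these are precisely the hypotheses of Theorems~\ref{thm.1energy'}--\ref{thm.renergy'}. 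I would also shrink $\T$ below the existence time that Theorem~\ref{thm.renergy'} produces for each $r=2,\dots,n+1$, again only a condition on the data.

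Next I would feed these bounds into Theorem~\ref{thm.1energy'} for $E_1$ and Theorem~\ref{thm.renergy'} for each $E_r$, $2\ls r\ls n+1$ — whose constants $C_1,C_2$ depend only on $K,M,M_1,L,1/\eps,\vol\idm_0$ and the \emph{initial} lower energies — and, using $E_0(t)=E_0(0)$, peel the leading terms off the exponentials to reach, on $[0,T_*]$,
\[
S(t)\ls S(0)+E_1(0)+\Lambda t\,e^{\Lambda T}\bigl(2E_1(0)+\textstyle\sum_{r\ge2}E_r(0)+A\bigr)
\]
with $\Lambda,A$ depending only on the data. The accounting point is that the surplus $S(0)-E_1(0)=E_0(0)+\sum_{r\ge2}E_r(0)$ is strictly positive once $E_0(0)>0$, so choosing $\T$ small enough that $\Lambda\T e^{\Lambda\T}\bigl(2E_1(0)+\sum_{r\ge2}E_r(0)+A\bigr)\ls\tfrac12\bigl(E_0(0)+\sum_{r\ge2}E_r(0)\bigr)$ gives $S(t)<2S(0)$ strictly — this is exactly what the factor $2$ in the conclusion, matching the factor $2$ in Theorem~\ref{thm.1energy'}, is for. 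The Taylor sign condition closes by a short-time estimate: $D_t(\nb_\N P)$ is bounded on $[0,T_*]$ by $|\nb_\N D_tP|,|\D v|,|\D P|,|\theta|$ through an elementary commutator identity, so $-\nb_\N P\gs 1/\E(0)-C_*t>\eps$ for $\T$ small; and $1/\iota_0(t)\ls e^{CMt}/\iota_0(0)<K$ for $K$ large and $\T$ small, using the evolution of the injectivity radius under the flow driven by $|\D v|\ls M$ as in \cite{CL00}.

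The step I expect to be the main obstacle is propagating the remaining piece of the geometric bootstrap, the $L^\infty$ bound on the second fundamental form $\theta$. Unlike $|\D v|$ or $|B|$, this cannot simply be read off the energies: the elliptic estimate $\|\bar\D^{r-2}\theta\|_{L^2(\D\idm_t)}\ls C(K,L,M,1/\eps,E_{r-1},\vol\idm_t)E_r$ at $r=n+1$ has a constant that degenerates in $K$ and $1/\iota_0$, so feeding it back does not obviously return $\|\theta\|_{L^\infty}<K$, while the direct evolution $D_t\theta$ carries two tangential derivatives of $v$ (and of $\N$), which are not controlled by $H^{n+1}$ norms alone. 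My plan here is to import the mechanism of \cite{CL00}: fix $K$ large in terms of the frozen data, control $\|\theta(t)\|_{L^\infty}$ by an interpolation between $\|\theta(t)\|_{L^2}$ — which stays near $\|\theta(0)\|_{L^2}\ls C\K(0)$ over a short time because it is governed by a low-order energy — and the top-order norm governed by $E_{n+1}$, and use the geometric evolution lemmas of \cite{CL00} to keep $\|\theta\|_{L^\infty}+1/\iota_0$ strictly below $K$ on $[0,\T]$. Once all three defining inequalities of $T_*$ are strict on $[0,T_*]$, continuity forces $T_*=T$ and the theorem follows; $\T$ is then the minimum of the finitely many smallness thresholds collected along the way, each of which is a continuous function of $\K(0),\E(0),E_0(0),\dots,E_{n+1}(0),\vol\idm_0$.
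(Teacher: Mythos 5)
Your overall architecture is the one the paper actually uses: a continuity argument in which the pointwise hypotheses of Theorems \ref{thm.1energy'}--\ref{thm.renergy'} are recovered from the energies by elliptic and Sobolev estimates (this is Lemma \ref{lem.7.6}), differential inequalities for the $E_r$ and for $\E$ are then closed on a short time interval (Lemmas \ref{lem.7.7}--\ref{lem.7.8}), and the Taylor sign condition is propagated exactly as you say, through $D_t(\nb_N\p)=\nb_N D_t\p+h_{NN}\nb_N\p-2h^a_dN^d\nb_a\p$. But the step you yourself single out as the main obstacle --- returning the bound $\norm{\theta}_{L^\infty(\D\Omega)}+1/\iota_0\ls K$ strictly --- is precisely where your sketch does not close, and it is resolved in the paper by a device you never introduce. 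Every elliptic and Sobolev constant you invoke, including the boundary interpolation constants that would give $\norm{\theta}_{L^\infty}$ from $\norm{\theta}_{L^2}$ and $\norm{\bnb^{n-1}\theta}_{L^2}$, depends on the very bootstrap quantity $K$ you want to beat, and these constants grow with $K$, so "fix $K$ large in terms of the frozen data" cannot work by itself. The paper breaks this circularity by working with $K_1=1/\iota_1$ (Definition \ref{defn.3.5}), which measures the variation of the unit normal rather than $\iota_0$ or $\theta$: all the appendix estimates carry constants in $K_1$ (or $\tilde K=\min(K,K_1)$), $\iota_1$ is chosen from the initial surface alone, and its defining property is shown to persist on $[0,\T]$ using only the short-time displacement bounds \eqref{eq.7.39}--\eqref{eq.7.41} on $\N$, $x$ and $\D x/\D y$ (Lemma \ref{lem.7.9}). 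With $K_1$ frozen, $\norm{\theta}_{L^\infty}$ is recovered not by interpolation but from the pointwise identity $|\theta|\ls\E\,|\nb^2\p|$ on $\D\Omega$, i.e.\ $\Pi\nb^2\p=\theta\,\nb_N\p$ (see \eqref{eq.CL004.20} and \eqref{eq.e.7}), combined with $\norm{\nb^2\p}_{L^\infty(\D\Omega)}\ls C(K_1)\sum_{\ell\ls n+1}\norm{\nb^\ell\p}_{L^2(\D\Omega)}$ from Lemma \ref{lem.CL00lemA.2}; this yields \eqref{eq.e.3}, a bound by a continuous function of $K_1$, $\E$, the energies and $\vol\Omega$ only, with no $K$ on the right-hand side.

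Two concrete points in your plan are therefore wrong or unsupported. First, the claimed estimate $1/\iota_0(t)\ls e^{CMt}/\iota_0(0)$ has no basis: the injectivity radius of the normal exponential map is not governed by such an evolution law (it can degenerate through non-local self-approach of the boundary), and the paper never propagates $\iota_0$ directly --- it propagates $\iota_1$ as above, which is why the final theorem can be stated in terms of $\K(0)$ while the working constant inside the proof is $K_1$. Second, your interpolation route for $\theta$, besides the $K$-dependence issue, misidentifies the mechanism: the low norm you need is not an evolution bound on $\norm{\theta}_{L^2}$ but the Taylor-sign identity tying $\theta$ to the Hessian of the total pressure, which is where $\E$ enters Lemma \ref{lem.7.6}. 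A smaller remark: your "surplus" accounting, which re-plugs the exponential bounds of Theorems \ref{thm.1energy'}--\ref{thm.renergy'} and needs $E_0(0)+\sum_{r\gs2}E_r(0)>0$ to absorb the factor $2$ in front of $E_1(0)$, forces $\T$ to degenerate as that surplus shrinks; the paper instead derives $|dE_r/dt|\ls C_r\sum_{s\ls r}E_s$ and $|d\E/dt|\ls C_r$ (Lemma \ref{lem.7.7}) and closes by Gronwall, which gives the doubling bound directly. Until the $\iota_1$/$K_1$ mechanism (or an equivalent way of making the geometric constants depend only on initial data) is supplied, the bootstrap on $\norm{\theta}_{L^\infty}+1/\iota_0$ does not reclose, and this remains a genuine gap rather than a detail.
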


In order to prove the above theorems, we need to use the elliptic estimates of the pressure $p$. However, the time derivative of $\Delta p$  involves a third-order term of the velocity which needs to be controlled by higher order energies. In order to overcome this difficulty, we work on the equations for the total pressure
$\p=p+\frac{1}{8\pi}|\B|^2,$ instead of those for the fluid pressure $p$.

Before we close this introduction, we mention here some studies on viscous or inviscid MHD equations, including the Cauchy problem or initial boundary value problems for the fixed boundaries \cite{DL02,DL72,LW11,HX05,HW08,HW10,PS10,ST83, Sec02, YM} and the references therein.

The rest of this paper is organized as follows: In section 2, we use the Lagrangian coordinates to transform the free boundary problem to a fixed initial boundary problem. The Lagrangian transformation induces a Riemannian metric on $\idm_0$, for which we recall the time evolution properties derived in \cite{CL00}, and prove some new identities which will be used later. We also write the equations in Lagrangian coordinates, by using the covariant spatial derivatives with respect to the Riemannian metric induced by the Lagrangian transformation, instead of using the ordinary derivatives. In section 3, we prove the conservation of the zeroth order energy $E_0(t)$, from which one can see that the boundary conditions on the magnetic fields $B$ is necessary for this energy conservation. We also prove in section 3 that the condition $B\cdot \N=0$ on the boundary propagates along the boundary. Section 4 is devoted to the first order energy estimates. In section 5, we prove the higher order energy estimates, by using the identities derived in section 2, the time evolution property of the metric on the boundary induced by the above mentioned Riemannian metric induced by the Lagrangian transformation, the projection properties and the elliptic estimates. In the derivation of the higher order energy estimates in section 5, some a priori assumptions are made, which will be justified in section 6. We also give an appendix on some estimates used in the previous sections, which are basically proved in \cite{CL00}.

\section{Reformulation in Lagrangian Coordinates}

Assume that we are given a velocity vector field $\vv(t,x)$ defined in a set $\idm\subset [0,T]\times\R^n$ such that the boundary of $\dm=\{x: (t,x)\in\idm\}$ moves with the velocity, i.e., $(1,\vv)\in T(\D\idm)$. We will now introduce Lagrangian or co-moving coordinates, that is, coordinates that are constant along the integral curves of the velocity vector field so that the boundary becomes fixed in these coordinates (cf. \cite{CL00}). Let $x=x(t,y)=f_t(y)$ be the trajectory of the fluid given by
\begin{align}\label{trajectory}
  \left\{\begin{aligned}
    &\frac{dx}{dt}=\vv(t,x(t,y)), \quad (t,y)\in [0,T]\times \Omega,\\
    &x(0,y)=f_0(y), \quad y\in\Omega.
  \end{aligned}\right.
\end{align}
where, when $t=0$, we can start with either the Euclidean coordinates in $\Omega=\idm_0$ or some other coordinates $f_0:\Omega\to \idm_0$ where $f_0$ is a diffeomorphism in which the domain $\Omega$ becomes simple. For each $t$, we will then have a change of coordinates $f_t:\Omega\to \dm$, taking $y\to x(t,y)$.  The Euclidean metric $\delta_{ij}$ in $\dm$ then induces a metric
\begin{align}\label{metric1}
  g_{ab}(t,y)=\delta_{ij}\frac{\D x^i}{\D y^a}\frac{\D x^j}{\D y^b}
\end{align}
and its inverse
\begin{align}
  g^{cd}(t,y)=\delta^{kl}\frac{\D y^c}{\D x^k}\frac{\D y^d}{\D x^l}
\end{align}
in $\Omega$ for each fixed $t$.

We will use covariant differentiation in $\Omega$ with respect to the metric $g_{ab}(t,y)$, since it corresponds to differentiation in $\dm$ under the change of coordinates $\Omega \ni y\to x(t,y)\in\dm$, and we will work in both coordinate systems. This also avoids possible singularities in the change of coordinates. We will denote covariant differentiation in the $y_a$-coordinates by $\nb_a$, $a=0, \cdots, n$, and differentiation in the $x_i$-coordinates by $\D_i$, $i=1,\cdots,n$. The covariant differentiation of a $(0,r)$ tensor $k(t,y)$ is the $(0,r+1)$ tensor given by
\begin{align}
  \nb_a k_{a_1\cdots a_r}=\frac{\D k_{a_1\cdots a_r}}{\D y^a}-\Gamma_{aa_1}^d k_{d\cdots a_r}-\cdots-\Gamma_{aa_r}^dk_{a_1\cdots d},
\end{align}
where the Christoffel symbols $\Gamma_{ab}^d$ are given by
\begin{align}
  \Gamma_{ab}^c=\frac{g^{cd}}{2}\left(\frac{\D g_{bd}}{\D y^a}+\frac{\D g_{ad}}{\D y^b}-\frac{\D g_{ab}}{\D y^d}\right)=\frac{\D y^c}{\D x^i}\frac{\D^2 x^i}{\D y^a \D y^b}.
\end{align}
If $w(t,x)$ is the $(0,r)$ tensor expressed in the $x$-coordinates, then the same tensor $k(t,y)$ expressed in the $y$-coordinates is given by
\begin{align}
  k_{a_1\cdots a_r}(t,y)=\frac{\D x^{i_1}}{\D y^{a_1}}\cdots \frac{\D x^{i_r}}{\D y^{a_r}}w_{i_1\cdots i_r}(t,x), \quad x=x(t,y),
\end{align}
and by the transformation properties for tensors,
\begin{align}\label{eq.covtensor}
  \nb_a k_{a_1\cdots a_r}=\frac{\D x^i}{\D y^a}\frac{\D x^{i_1}}{\D y^{a_1}}\cdots \frac{\D x^{i_r}}{\D y^{a_r}}\frac{\D w_{i_1\cdots i_r}}{\D x^i}.
\end{align}
Covariant differentiation is constructed so the norms of tensors are invariant under changes of coordinates,
\begin{align}\label{eq.norminv}
  g^{a_1b_1}\cdots g^{a_rb_r} k_{a_1\cdots a_r}k_{b_1\cdots b_r}=\delta^{i_1j_1}\cdots \delta^{i_rj_r}w_{i_1\cdots i_r}w_{j_1\cdots j_r}.
\end{align}

Furthermore, expressed in the $y$-coordinates,
\begin{align}\label{eq.Di}
  \D_i=\frac{\D}{\D x^i}=\frac{\D y^a}{\D x^i}\frac{\D}{\D y^a}.
\end{align}
Since the curvature vanishes in the $x$-coordinates, it must do so in the $y$-coordinates, and hence
\begin{align}
  [\nb_a,\nb_b]=0.
\end{align}
Let us introduce the notation ${{k_{a\cdots}}^b}_{\cdots c}=g^{bd}k_{a\cdots d\cdots c}$, and recall that covariant differentiation commutes with lowering and rising indices: $g^{ce}\nb_a k_{b\cdot e\cdots d}=\nb_a g^{ce}k_{b\cdot e\cdots d}$. Let us also introduce a notation for the material derivative
\begin{align}
  D_t=\left.\frac{\D}{\D t}\right|_{y=\textrm{const}}=\left.\frac{\D}{\D t}\right|_{x=\textrm{const}}+v^k\frac{\D}{\D x^k}.
\end{align}
Then we have, from \cite[Lemma 2.2]{CL00}, that
\begin{align}\label{eq.Dt}
  D_tk_{a_1\cdots a_r}=\frac{\D x^{i_1}}{\D y^{a_1}}\cdots \frac{\D x^{i_r}}{\D y^{a_r}}\left(D_t w_{i_1\cdots i_r}+\frac{\D v^\ell}{\D x^{i_1}}w_{\ell\cdots i_r}+\cdots +\frac{\D v^\ell}{\D x^{i_r}}w_{i_1\cdots\ell}\right).
\end{align}

Now we recall a result concerning time derivatives of the change of
coordinates and commutators between time derivatives and space derivatives (cf. \cite[Lemma 2.1]{CL00}).

\begin{lemma}\label{lem.CL00lem2.1}
  Let $x=f_t(y)$ be the change of variables given by \eqref{trajectory}, and let $g_{ab}$ be the metric given by \eqref{metric1}. Let $v_i=\delta_{ij}v^j=v^i$, and set
  \begin{align}
  u_a(t,y)=&v_i(t,x)\frac{\D x^i}{\D y^a},  & u^a=&g^{ab}u_b,\label{eq.CL00lem2.1.1}\\
    h_{ab}=&\frac{1}{2}D_t g_{ab}, & h^{ab}=&g^{ac}h_{cd}g^{db}.\label{eq.CL00lem2.1.2}
  \end{align}
  Then
  \begin{align}\label{eq.CL00lem2.1.3}
    D_t\frac{\D x^i}{\D y^a}=\frac{\D x^k}{\D y^a} \frac{\D v^i}{\D x^k},\quad D_t\frac{\D y^a}{\D x^i}=-\frac{\D y^a}{\D x^k}\frac{\D v^k}{\D x^i},
  \end{align}
  \begin{align}\label{eq.CL00lem2.1.4}
    &D_t g_{ab}=\nb_a u_b+\nb_b u_a, &&D_t g^{ab}=-2h^{ab},  \quad D_t d\mu_g=g^{ab}h_{ab}d\mu_g,
  \end{align}
  \begin{align}\label{eq.CL00lem2.1.5}
    &D_t \Gamma_{ab}^c=\nb_a\nb_b u^c,
  \end{align}
  where $d\mu_g$ is the Riemannian volume element on $\Omega$ in the metric $g$.
\end{lemma}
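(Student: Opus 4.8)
The plan is to reduce every identity to two elementary facts: by the definition \eqref{trajectory} of the flow, $D_t x^i(t,y)=v^i(t,x(t,y))$, and $D_t=\D_t|_{y=\mathrm{const}}$ commutes with each $\D/\D y^a$. First I would differentiate $D_t x^i=v^i$ in $y^a$ and apply the chain rule to get $D_t\frac{\D x^i}{\D y^a}=\frac{\D}{\D y^a}v^i(t,x(t,y))=\frac{\D v^i}{\D x^k}\frac{\D x^k}{\D y^a}$; the companion identity for $\D y^a/\D x^i$ then follows by $D_t$-differentiating $\frac{\D y^a}{\D x^k}\frac{\D x^k}{\D y^b}=\delta^a_b$, substituting the first identity, and contracting with $\frac{\D y^b}{\D x^i}$. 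This establishes \eqref{eq.CL00lem2.1.3}, which drives everything else.

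Next I would apply $D_t$ to the definition \eqref{metric1} of $g_{ab}$ via the product rule and \eqref{eq.CL00lem2.1.3}, then lower the free indices with $\delta$ and relabel dummy indices to arrive at $D_t g_{ab}=\frac{\D x^i}{\D y^a}\frac{\D x^j}{\D y^b}\bigl(\tfrac{\D v_j}{\D x^i}+\tfrac{\D v_i}{\D x^j}\bigr)$. To recognise the right-hand side as $\nb_a u_b+\nb_b u_a$, I would observe that $v_i=\delta_{ij}v^j$ is a $(0,1)$ tensor in the $x$-frame whose expression in the $y$-frame is precisely $u_a$ from \eqref{eq.CL00lem2.1.1}, so that the transformation law \eqref{eq.covtensor} yields $\nb_a u_b=\frac{\D x^i}{\D y^a}\frac{\D x^j}{\D y^b}\tfrac{\D v_j}{\D x^i}$; symmetrising in $a,b$ closes this case. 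The identity $D_t g^{ab}=-2h^{ab}$ then comes from $D_t$-differentiating $g^{ac}g_{cb}=\delta^a_b$ and raising indices with $g$, and for the volume form I would use Jacobi's formula $D_t\det g=\det g\cdot g^{ab}D_t g_{ab}$ to get $D_t d\mu_g=\tfrac12 g^{ab}D_t g_{ab}\,d\mu_g=g^{ab}h_{ab}\,d\mu_g$ (equivalently, $\sqrt{\det g}=\det(\D x/\D y)$ and $D_t\det(\D x/\D y)=\det(\D x/\D y)\,\D_k v^k$, which vanishes for incompressible flows).

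The one genuinely computational step is \eqref{eq.CL00lem2.1.5}. I would start from $\Gamma_{ab}^c=\frac{\D y^c}{\D x^i}\frac{\D^2 x^i}{\D y^a\D y^b}$, apply $D_t$ using \eqref{eq.CL00lem2.1.3} together with the commutation of $D_t$ and $\D_a\D_b$, and then expand $\D_a\D_b\bigl(v^i(t,x(t,y))\bigr)$ by the chain rule. The expansion produces a term proportional to $\D^2 x^k/\D y^a\D y^b$ which should cancel exactly the term coming from $D_t(\D y^c/\D x^i)$ after one relabelling of a dummy index, leaving $D_t\Gamma_{ab}^c=\frac{\D y^c}{\D x^i}\frac{\D x^k}{\D y^a}\frac{\D x^\ell}{\D y^b}\frac{\D^2 v^i}{\D x^k\D x^\ell}$. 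To match this with $\nb_a\nb_b u^c$ I would invoke flatness of the metric in the $x$-coordinates: the Christoffel symbols vanish there, so covariant differentiation of the contravariant field $u^c$ (which corresponds under the change of coordinates to the Euclidean vector field $v^i$, by raising the index in \eqref{eq.CL00lem2.1.1}) reduces to ordinary partial differentiation, and transforming $\D^2 v^i/\D x^k\D x^\ell$ back to the $y$-frame by \eqref{eq.covtensor} gives exactly the displayed expression.

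The main obstacle, and essentially the only place care is required, is this cancellation of the non-tensorial second-derivative terms $\D^2 x^i/\D y^a\D y^b$ in the $\Gamma$ computation, together with correctly reading off the surviving term as $\nb_a\nb_b u^c$ (the ordering of the two covariant derivatives being immaterial since $[\nb_a,\nb_b]=0$). Everything else is routine index bookkeeping with the chain rule and with raising and lowering indices by $\delta$ in the $x$-frame and by $g$ in the $y$-frame.
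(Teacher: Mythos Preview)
Your proposal is correct and follows essentially the same approach as the paper. The paper in fact defers most of the argument to \cite[Lemma~2.1]{CL00} and only writes out explicitly the identities $D_t g^{ab}=-2h^{ab}$ (via $D_t(g^{ad}g_{dc})=0$) and $D_t d\mu_g=g^{ab}h_{ab}\,d\mu_g$ (via Jacobi's formula for $D_t\det g$), both of which you handle in exactly the same way; your arguments for \eqref{eq.CL00lem2.1.3}, the first identity in \eqref{eq.CL00lem2.1.4}, and \eqref{eq.CL00lem2.1.5} are precisely the standard computations underlying the cited reference.
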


\begin{proof}
  The proof is the same as that of \cite[Lemma 2.1]{CL00} except that we need to make some modification due to the difference of the definition of $h_{ab}$. Indeed, the proof of \eqref{eq.CL00lem2.1.3}, \eqref{eq.CL00lem2.1.5} and the first part of \eqref{eq.CL00lem2.1.4} is the same as the mentioned. The second part of \eqref{eq.CL00lem2.1.4} follows from \eqref{eq.CL00lem2.1.2} since $0=D_t(g^{ad}g_{dc})=(D_tg^{ad}) g_{dc}+g^{ad}D_t g_{dc}=(D_tg^{ad}) g_{dc}+2g^{ad}h_{dc}$ and then $D_t g^{ab}=(D_tg^{ad})\delta_d^b=(D_tg^{ad}) g_{dc}g^{cb}=-2g^{ad}h_{dc}g^{cb}=-2h^{ab}$. The last part of \eqref{eq.CL00lem2.1.4} follows since in local coordinates $d\mu_g=\sqrt{\det g}dy$ and $D_t(\det g)=(\det g ) g^{ab}D_tg_{ab}$.
\end{proof}

We now recall the estimates of commutators between the material derivative $D_t$ and space derivatives $\D_i$ and covariant derivatives $\nb_a$.

\begin{lemma}[\mbox{\cite[Lemma 2.3]{CL00}}]
  Let $\D_i$ be given by \eqref{eq.Di}. Then
  \begin{align}\label{eq.DtDi}
    [D_t,\D_i]=-(\D_i v^k)\D_k.
  \end{align}
  Furthermore,
  \begin{align}\label{eq.DtDir}
    [D_t,\D^r]=\sum_{s=0}^{r-1}-\left(r\atop s+1\right)(\D^{1+s}v)\cdot \D^{r-s},
  \end{align}
  where the symmetric dot product is defined to be in components
  \begin{align}
    \left((\D^{1+s}v)\cdot \D^{r-s}\right)_{i_1\cdots i_r}=\frac{1}{r!}\sum_{\sigma\in \Sigma_r}\left(\D_{i_{\sigma_1}\cdots i_{\sigma_{1+s}}}^{1+s} v^k\right)\D_{ki_{\sigma_{s+2}}\cdots i_{\sigma_r}}^{r-s},
  \end{align}
  and $\sum_r$ denotes the collection of all permutations of $\{1,2,\cdots, r\}$.
\end{lemma}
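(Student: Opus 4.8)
The plan is to prove \eqref{eq.DtDi} by a direct computation and then obtain \eqref{eq.DtDir} by induction on $r$, using the fact that $[D_t,\cdot\,]$ behaves as a derivation with respect to composition of operators.

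For \eqref{eq.DtDi}: working in the Euclidean $x$-coordinates, where $D_t=\D_t+v^k\D_k$ and ordinary partial derivatives commute, we get for any function $f$
\begin{align*}
  [D_t,\D_i]f=D_t\D_if-\D_iD_tf=v^k\D_k\D_if-(\D_iv^k)\D_kf-v^k\D_i\D_kf=-(\D_iv^k)\D_kf .
\end{align*}
Equivalently, one may expand $D_t(\D_if)=D_t\big(\tfrac{\D y^a}{\D x^i}\tfrac{\D f}{\D y^a}\big)$ as in \cite{CL00}: since $D_t$ commutes with $\D/\D y^a$ and $D_t\tfrac{\D y^a}{\D x^i}=-\tfrac{\D y^a}{\D x^k}\tfrac{\D v^k}{\D x^i}$ by Lemma \ref{lem.CL00lem2.1}, one term is $\tfrac{\D y^a}{\D x^i}\tfrac{\D}{\D y^a}(D_tf)=\D_i(D_tf)$ by \eqref{eq.Di} and the other is $-(\D_iv^k)\D_kf$.

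For \eqref{eq.DtDir} I would induct on $r$, the base case $r=1$ being \eqref{eq.DtDi}. Writing $\D^r=\D\circ\D^{r-1}$ and using the elementary identity $[D_t,\D^r]=[D_t,\D]\,\D^{r-1}+\D\,[D_t,\D^{r-1}]$, the first summand contributes, by \eqref{eq.DtDi}, a term of the form $-(\D v)\cdot\D^r$, i.e. the $s=0$ piece; into the second summand I substitute the inductive hypothesis and apply $\D$, which by the Leibniz rule splits each term $-\binom{r-1}{s'+1}(\D^{1+s'}v)\cdot\D^{r-1-s'}$ into a piece where $\D$ lands on the $v$-factor (turning $\D^{1+s'}v$ into $\D^{2+s'}v$, i.e. $s'\mapsto s'+1$) and a piece where $\D$ lands on the $\D^{r-1-s'}$-factor (turning it into $\D^{r-s'}$). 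Collecting all contributions that carry a fixed number $1+s$ of derivatives on $v$ gives the coefficient $-\binom{r-1}{s}-\binom{r-1}{s+1}=-\binom{r}{s+1}$ by Pascal's rule. Since $[\nb_a,\nb_b]=0$, the tensors $\D^r w$ are symmetric, so all the terms produced above may legitimately be resymmetrized, and the prefactor $\tfrac1{r!}\sum_{\sigma\in\Sigma_r}$ in the definition of the symmetric dot product absorbs exactly the multiplicities coming from the choice of which index slots carry the $v$-derivatives and the contracted index $k$; this makes the term-by-term identification with the right-hand side of \eqref{eq.DtDir} correct.

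The base case is immediate, so the main — and purely mechanical — obstacle is the combinatorial bookkeeping in the inductive step: one must track the symmetrization multiplicities carefully and check that, term by term, they combine with the binomial coefficients supplied by the inductive hypothesis to produce exactly $\binom{r}{s+1}$ through Pascal's identity. Everything else follows directly from the definitions, Lemma \ref{lem.CL00lem2.1}, and the vanishing of the commutator $[\nb_a,\nb_b]$.
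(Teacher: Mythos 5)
Your proof is correct. Note that the paper itself offers no argument for this lemma: it is quoted verbatim from \cite[Lemma 2.3]{CL00}, so there is no in-paper proof to compare against, and your computation supplies exactly the argument delegated to that reference, along the same lines as the original: the pointwise identity $[D_t,\D_i]=-(\D_i v^k)\D_k$ from cancelling the two $v^k\D_k\D_i$ terms, then induction on $r$ via $[D_t,\D^r]=[D_t,\D]\,\D^{r-1}+\D\,[D_t,\D^{r-1}]$, the Leibniz rule, and Pascal's identity $\binom{r-1}{s}+\binom{r-1}{s+1}=\binom{r}{s+1}$. The one point you should make explicit in a final write-up is the one you flag: since $[D_t,\D^r]q=D_t\D^r q-\D^r D_tq$ is symmetric in the $r$ derivative indices (ordinary derivatives commute), the unsymmetrized terms produced in the inductive step may be replaced by their symmetrizations; symmetrization is linear, each unsymmetrized piece symmetrizes exactly to the defined product $(\D^{1+s}v)\cdot\D^{r-s}$, and the coefficients then add as you computed (for $s=0$: $1+\binom{r-1}{1}=\binom{r}{1}$; for $s=r-1$: $\binom{r-1}{r-1}=\binom{r}{r}$), with the $r=2$ case serving as a quick sanity check.
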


\begin{lemma}[cf. \mbox{\cite[Lemma 2.4]{CL00}}] \label{lem.CL00lem2.4}
  Let $T_{a_1\cdots a_r}$ be a $(0,r)$ tensor. We have
  \begin{align}\label{eq.CL00lem2.4.1}
    [D_t,\nb_a]T_{a_1\cdots a_r}=-(\nb_{a_1}\nb_a u^d)T_{da_2\cdots a_r}-\cdots -(\nb_{a_r}\nb_a u^d)T_{a_1\cdots a_{r-1}d}.
  \end{align}
  If $\Delta=g^{cd}\nb_c\nb_d$ and $q$ is a function, we have
  \begin{align}
    [D_t,g^{ab}\nb_a]T_b=&-2h^{ab}\nb_a T_b-(\Delta u^e)T_e,\label{eq.CL002.4.2}\\
    [D_t,\nb]q=&0, \label{eq.Dtpcommu}\\
    [D_t,\Delta]q=&-2h^{ab}\nb_a\nb_b q-(\Delta u^e)\nb_e q.\label{eq.CL002.4.3}
  \end{align}
  Furthermore,
  \begin{align}\label{eq.commutator}
    [D_t,\nb^r]q=\sum_{s=1}^{r-1}-\left(r\atop s+1\right)(\nb^{s+1}u)\cdot \nb^{r-s} q,
  \end{align}
  where the symmetric dot product is defined to be in components
  \begin{align}
    \left((\nb^{s+1}u)\cdot \nb^{r-s} q\right)_{a_1\cdots a_r}=\frac{1}{r!}\sum_{\sigma\in\Sigma_r}\left(\nb_{a_{\sigma_1}\cdots a_{\sigma_{s+1}}}^{s+1} u^d\right)\nb_{da_{\sigma_{s+2}}\cdots a_{\sigma_r}}^{r-s}q.
  \end{align}
\end{lemma}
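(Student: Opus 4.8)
The plan is to derive all five identities from a single input --- the formula $D_t\Gamma_{ab}^c=\nb_a\nb_b u^c$ of Lemma~\ref{lem.CL00lem2.1} --- together with two elementary observations: that $D_t=\D/\D t|_{y=\mathrm{const}}$ commutes with the coordinate partials $\D/\D y^a$, and that the curvature of $g$ vanishes in the $y$-coordinates, so that $\nb_a\nb_b u^c=\nb_b\nb_a u^c$.

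First I would prove \eqref{eq.CL00lem2.4.1}. Expanding $\nb_a T_{a_1\cdots a_r}$ through the definition of covariant differentiation and applying $D_t$: since $D_t$ commutes with $\D/\D y^a$, the derivative of the principal term $\D T_{a_1\cdots a_r}/\D y^a$ together with the terms $-\Gamma_{aa_j}^d D_t T_{a_1\cdots d\cdots a_r}$ reassembles precisely into $\nb_a(D_t T)_{a_1\cdots a_r}$. What remains is $-\sum_{j=1}^{r}(D_t\Gamma_{aa_j}^d)\,T_{a_1\cdots d\cdots a_r}$; inserting $D_t\Gamma_{aa_j}^d=\nb_a\nb_{a_j}u^d=\nb_{a_j}\nb_a u^d$ (the vanishing of curvature being used to bring the derivatives into the order that appears in the statement) gives \eqref{eq.CL00lem2.4.1}.

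The contracted identities are then immediate. For \eqref{eq.CL002.4.2} write $D_t(g^{ab}\nb_a T_b)=(D_tg^{ab})\nb_aT_b+g^{ab}\nb_a D_tT_b+g^{ab}[D_t,\nb_a]T_b$; using $D_tg^{ab}=-2h^{ab}$ from Lemma~\ref{lem.CL00lem2.1} and contracting the $r=1$ case of \eqref{eq.CL00lem2.4.1}, namely $g^{ab}[D_t,\nb_a]T_b=-(\Delta u^e)T_e$ with $\Delta=g^{cd}\nb_c\nb_d$, yields \eqref{eq.CL002.4.2}. For a scalar $q$ one has $\nb_a q=\D q/\D y^a$, hence $[D_t,\nb_a]q=0$, which is \eqref{eq.Dtpcommu}; and \eqref{eq.CL002.4.3} is the special case $T_b=\nb_b q$ of \eqref{eq.CL002.4.2}, since $D_t\nb_b q=\nb_b D_t q$ forces $g^{ab}\nb_a D_t\nb_b q=\Delta D_t q$. (The factors of $2$ here are inherited directly from the normalization $h_{ab}=\tfrac12 D_tg_{ab}$ used in Lemma~\ref{lem.CL00lem2.1}.)

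Finally, \eqref{eq.commutator} is proved by induction on $r$: writing $\nb^r q=\nb(\nb^{r-1}q)$, one applies \eqref{eq.CL00lem2.4.1} to the $(0,r-1)$ tensor $\nb^{r-1}q$ and the inductive hypothesis for $[D_t,\nb^{r-1}]q$. I expect the bulk of the work to sit in this last step, in the purely combinatorial bookkeeping: tracking which tensor slot each factor $\nb_{a_j}\nb_a u^d$ occupies, symmetrizing over $\sigma\in\Sigma_r$, and verifying that the pieces recombine with exactly the binomial coefficients $\binom{r}{s+1}$. This is the computation carried out in \cite[Lemma 2.4]{CL00}, which I would either reproduce in that symmetrized form or simply cite; everything else is a direct unwinding of definitions.
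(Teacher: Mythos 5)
Your proposal is correct and follows essentially the same route as the paper: the contracted identities \eqref{eq.CL002.4.2} and \eqref{eq.CL002.4.3} are obtained exactly as in the text from $D_tg^{ab}=-2h^{ab}$ and the contracted case of \eqref{eq.CL00lem2.4.1}, while the general commutator identity and the symmetrized binomial formula \eqref{eq.commutator} rest, as in the paper, on the computation of \cite[Lemma 2.4]{CL00} (which you correctly sketch via $D_t\Gamma_{ab}^c=\nb_a\nb_b u^c$ and induction). Your proof of \eqref{eq.Dtpcommu} directly in the $y$-coordinates, using that $D_t$ commutes with $\D/\D y^a$ and that $\nb_a q=\D q/\D y^a$ for scalars, is a slightly more elementary but equally valid alternative to the paper's verification through the $x$-coordinates.
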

\begin{proof}
  The proof is similar to that of \cite[Lemma 2.4]{CL00}. We only need to verify \eqref{eq.CL002.4.2} and \eqref{eq.CL002.4.3} since they involve the term $D_t g^{ab}$. Now from \eqref{eq.CL00lem2.1.4} and \eqref{eq.CL00lem2.4.1}, it follows that
  \begin{align*}
    [D_t, g^{ab}\nb_a]T_b=&D_t(g^{ab}\nb_a T_b)-g^{ab}\nb_a D_t T_b\\
    =&(D_tg^{ab}) \nb_a T_b+g^{ab} D_t\nb_a T_b-g^{ab}\nb_a D_t T_b\\
    =&-2h^{ab}\nb_a T_b+ g^{ab}[D_t,\nb_a]T_b\\
    =&-2h^{ab}\nb_a T_b-g^{ab}\nb_b\nb_a u^e T_e\\
    =&-2h^{ab}\nb_a T_b-(\Delta u^e) T_e.
  \end{align*}
  From \eqref{eq.Dt} and \eqref{eq.DtDi}, we have
  \begin{align*}
    D_t\nb_a q=&D_t\left(\frac{\D x^i}{\D y^a} \D_i q\right)=\frac{\D x^i}{\D y^a}\left(D_t \D_iq+\D_\ell\frac{\D v^\ell}{\D x^i}\right)\\
    =&\frac{\D x^i}{\D y^a}\left([D_t,\D_i]q+ \D_iD_tq+\D_\ell q\frac{\D v^\ell}{\D x^i}\right)\\
    =&\frac{\D x^i}{\D y^a}\left(-\D_i v^k\D_k q+ \D_iD_tq+\D_i v^\ell\D_\ell q\right)=\frac{\D x^i}{\D y^a}\D_iD_tq=\nb_a D_t q,
  \end{align*}
  namely, \eqref{eq.Dtpcommu} follows.  Then, \eqref{eq.CL002.4.3} follows from \eqref{eq.CL002.4.2} and
  \begin{align*}
    [D_t,\Delta]q=&D_t\Delta q-\Delta D_t q=D_t(g^{ab}\nb_a\nb_b q)-g^{ab}\nb_a\nb_b D_tq\\
    =&[D_t,g^{ab}\nb_a]\nb_b q+g^{ab}\nb_a[D_t,\nb_b]q\\
    =&[D_t,g^{ab}\nb_a]\nb_b q.
  \end{align*}
  Therefore, we complete the proof.
\end{proof}

Denote
\begin{align}
  &B_i=\delta_{ij}B^j=B^i, \quad \beta_a=B_j\frac{\D x^j}{\D y^a},\quad \beta^a=g^{ab}\beta_b, \text{ and }
   |\beta|^2=\beta_a\beta^a.
\end{align}
It follows, from \eqref{eq.norminv}, that
\begin{align}\label{eq.beta}
  |\beta|^2=|\B|^2, \quad B_j=\frac{\D y^a}{\D x^j}\beta_a, \quad\p=p+\frac{1}{8\pi}|\beta|^2.
\end{align}
Then $P=\frac{1}{8\pi} \bv^2$ on the boundary $\D\Omega$.

From \eqref{eq.CL00lem2.1.1}, $\eqref{mhd3.1}$, \eqref{eq.beta}, \eqref{eq.CL00lem2.1.3}, \eqref{eq.covtensor}, we have
\begin{align*}
D_t u_a=&D_t\left(v_j\frac{\D x^j}{\D y^a}\right) =\frac{\D x^j}{\D y^a}D_t v_j+v_jD_t\frac{\D x^j}{\D y^a}\\
  =&\frac{\D x^j}{\D y^a}\left(-\D_j\p +\frac{1}{4\pi}B^k\D_k B_j\right)+v_j \frac{\D x^k}{\D y^a} \frac{\D v^j}{\D x^k}\\
  =&-\nb_a\p +\frac{1}{4\pi}\frac{\D x^j}{\D y^a}\delta^{ki}\frac{\D y^b}{\D x^i}\beta_b\delta_k^l \frac{\D y^d}{\D x^l} \frac{\D y^c}{\D x^j}\nb_d\beta_c\\
  &\qquad+\frac{\D y^b}{\D x^j} u_b\delta^{lj}\frac{\D y^c}{\D x^l} \nb_a u_c\\
  =&-\nb_a\p +\frac{1}{4\pi}g^{bd}g_{ae}g^{ec}\beta_b\nb_d\beta_c+g^{bc} u_b\nb_a u_c\\
  =&-\nb_a\p +\frac{1}{4\pi}\beta^d\nb_d\beta_a+u^c\nb_a u_c.
\end{align*}
Similarly, we get
\begin{align*}
  D_t\beta_a=&\frac{\D x^j}{\D y^a}D_t B_j+B_jD_t\frac{\D x^j}{\D y^a}
  =\frac{\D x^j}{\D y^a}B^k\D_k v_j+B_j\frac{\D x^k}{\D y^a} \frac{\D v^j}{\D x^k}\\
  =&\beta^d\nb_d u_a+\beta^c\nb_a u_c.
\end{align*}

Thus, the system \eqref{mhd1} can be written in the Lagrangian coordinates as
\begin{subequations}\label{mhd41}
\begin{align}
    &D_tu_a+\nb_a\p =u^c\nb_a u_c+\frac{1}{4\pi}\beta^d\nb_d\beta_a,\label{mhd41.1}\\
    &D_t\beta_a=\beta^d\nb_d u_a+\beta^c\nb_a u_c,\label{mhd41.2}\\
    &\nb_a u^a=0 \text{ in } [0,T]\times\Omega;\quad \nb_a\beta^a=0 \text{ in } \{t=0\}\times \Omega,\label{mhd41.3}\\
    &|\beta|=\bv \text{ and }  \beta_a N^a=0 \quad \text{on } \{t=0\}\times \D\Omega,\label{mhd41.4}\\
    &p=0 \quad \text{on } [0,T]\times\D\Omega.\label{mhd41.5}
\end{align}
\end{subequations}

\section{The Energy Conservation and Some Conserved Quantities}

Firstly, the divergence free property of $\beta$, i.e., $\dv\beta=0$, is preserved for all times under the Lagrangian coordinates or in view of the material derivative, i.e., $D_t\dv\beta=0$. Indeed, from \eqref{eq.CL002.4.2} and Lemma \ref{lem.CL00lem2.1}, the divergence of $\eqref{mhd41.2}$ gives
  \begin{align*}
    &D_t(g^{ab}\nb_b\beta_a)=[D_t,g^{ab}\nb_b]\beta_a+g^{ab}\nb_b D_t \beta_a\\
    =&-2h^{ab}\nb_b\beta_a-(\Delta u^e) \beta_e+g^{ab}\nb_b(\beta^d\nb_d u_a+\beta^c\nb_a u_c)\\
    =&-2h^{ab}\nb_b\beta_a-(\Delta u^e) \beta_e+\nb_b\beta^d \nb_d u^b+\beta^d\nb_d\nb_b u^b\\
    &+g^{ab}\nb_b \beta^c \nb_a u_c+\beta^c\Delta u_c\\
    =&-g^{ac}(\nb_c u_d+\nb_d u_c)g^{db}\nb_b\beta_a+\nb_b\beta^d \nb_d u^b+g^{ab}\nb_b \beta^c \nb_a u_c\\
    =&0.
  \end{align*}

Secondly, we assume that
\begin{align}\label{eq.beta0}
     |\nb u(t,y)|\ls C\quad  \text{on } [0,T]\times\D\Omega,
\end{align}
then that $\beta\cdot N=0$ is preserved for all times $t$ in the lifespan $[0,T]$, that is, we have $\beta\cdot N=0$ on $[0,T]\times\D\Omega$ if $\beta\cdot N=0$ on $\{t=0\}\times\D\Omega$. Indeed, we have, from \eqref{mhd41.2} and Lemmas \ref{lem.CL00lem2.1} and \ref{lem.CL00lem3.9}, that
\begin{align*}
  D_t(\beta_aN^a)=&D_t(g^{ab}\beta_a N_b)=N^aD_t\beta_a+\beta_a (D_tg^{ab})N_b+\beta_ag^{ab}D_tN_b\\
  =&N^a(\beta^d\nb_du_a+\beta^d\nb_a u_d)-\nb_cu^b\beta^cN_b-N^d\nb_du^a\beta_a+\beta_ag^{ab}h_{NN}N_b\\
  =&h_{NN}\beta_aN^a,
\end{align*}
which implies, by the Gronwall inequality and  the identity $\big|D_t|f|\big|=\abs{D_t f}$, that
\begin{align}
  |(\beta_aN^a)(t,y)|\ls e^{Ct} |(\beta_aN^a(0,y)|=0.
\end{align}

Thus, in view of the above three preserved quantities, the system \eqref{mhd41}, or \eqref{mhd1}, can be written in the Lagrangian coordinates as
\begin{subequations}\label{mhd4}
\begin{align}
    &D_tu_a+\nb_a\p =u^c\nb_a u_c+\frac{1}{4\pi}\beta^d\nb_d\beta_a,\label{mhd4.1}\\
    &D_t\beta_a=\beta^d\nb_d u_a+\beta^c\nb_a u_c,\label{mhd4.2}\\
    &\nb_a u^a=0,\quad \nb_a\beta^a=0, \text{ in } [0,T]\times \Omega,\label{mhd4.3}\\
    &\p=\frac{1}{8\pi}\bv^2, \quad |\beta|=\bv, \quad \beta\cdot N=0,\quad \text{on } [0,T]\times\D\Omega.\label{mhd4.4}
\end{align}
\end{subequations}

Finally, the energy defined by
\begin{align}
  E_0(t)=\int_\Omega \left(\frac{1}{2}|u|^2+\frac{1}{8\pi}|\beta|^2\right)d\mu_g
\end{align}
is conserved. In fact, by \eqref{eq.CL00lem2.1.4}, \eqref{mhd41}, Gauss' formula and the fact $D_td\mu_g=0$ due to $\dv u=0$, it yields
      \begin{align*}
        &\frac{d}{dt}E_0(t)=\int_\Omega D_t \left(\frac{1}{2}g^{ab}u_a u_b+\frac{1}{8\pi}g^{ab}\beta_a\beta_b\right)d\mu_g\\
        =&\int_\Omega \left(u^a D_t u_a+\frac{1}{4\pi} \beta^a D_t \beta_a\right) d\mu_g\\
        &+\int_\Omega \frac{1}{2}(D_t g^{ab}) \left(u_au_b+\frac{1}{4\pi}\beta_a\beta_b \right)d\mu_g\\
        =&\int_\Omega [-u^a\nb_a\p+u^au^c\nb_a u_c+\frac{1}{4\pi}u^a \beta^d \nb_d \beta_a]d\mu_g\\
        &+\int_\Omega\left(\frac{1}{4\pi}\beta^a \beta^d\nb_d u_a+\frac{1}{4\pi}\beta^a \beta^c\nb_a u_c\right)d\mu_g\\
        &-\int_\Omega h^{ab} \left(u_au_b+\frac{1}{4\pi}\beta_a\beta_b \right)d\mu_g\\
        =&-\int_{\D\Omega} N_au^a\p d\mu_\gamma+\int_{\Omega}  u^a u^c\nb_a u_c d\mu_g+\frac{1}{4\pi}\int_{\D\Omega} N_d\beta^d u^a\beta_a d\mu_\gamma\\
        &+\frac{1}{4\pi}\int_\Omega\beta^a \beta^c\nb_a u_c d\mu_g\\
        &-\frac{1}{2}\int_\Omega g^{ac}(\nb_c u_d+\nb_d u_c)g^{db} \left(u_au_b+\frac{1}{4\pi}\beta_a\beta_b \right)d\mu_g\\
        =&0.
      \end{align*}

\section{The First Order Energy Estimates}

From \eqref{eq.CL00lem2.4.1} and \eqref{mhd4.1}, we have
\begin{align*}
  &D_t(\nb_b u_a)+\nb_b\nb_a\p \\
  =&[D_t,\nb_b]u_a+\nb_b D_t u_a+\nb_b\nb_a\p \\
  =&-(\nb_a\nb_b u^d) u_d+\frac{1}{4\pi}\nb_b(\beta^d\nb_d\beta_a)+\nb_b(u^c\nb_a u_c)\\
  =&-(\nb_a\nb_b u^d) u_d+\frac{1}{4\pi}(\nb_b\beta^d \nb_d\beta_a+\beta^d\nb_b\nb_d\beta_a)+\nb_b u^c\nb_au_c+u^c\nb_b\nb_au_c\\
  =&\nb_b u^c\nb_au_c+\frac{1}{4\pi}(\nb_b\beta^d \nb_d\beta_a+\beta^d\nb_b\nb_d\beta_a).
\end{align*}
From \eqref{eq.CL00lem2.4.1} and \eqref{mhd4.2}, we get
\begin{align*}
D_t&(\nb_b\beta_a)=[D_t,\nb_b]\beta_a+\nb_b D_t \beta_a\\
  =&-(\nb_a\nb_b u^d) \beta_d+\nb_b(\beta^d\nb_d u_a+\beta^c\nb_a u_c)\\
  =&-(\nb_a\nb_b u^d) \beta_d+\nb_b\beta^d\nb_d u_a+\beta^d\nb_b\nb_d u_a+\nb_b\beta^c\nb_a u_c+\beta^c\nb_b\nb_a u_c\\
  =&\nb_b\beta^c(\nb_c u_a+\nb_a u_c)+\beta^d\nb_d\nb_b u_a.
\end{align*}
Thus, we obtain
\begin{align}\label{eq.1energy1}
D_t(\nb_b u_a)+&\nb_b\nb_a\p
=\nb_b u^c\nb_au_c+\frac{1}{4\pi}(\nb_b\beta^d \nb_d\beta_a+\beta^d\nb_b\nb_d\beta_a),\\ \label{eq.1energy2}
D_t(\nb_b\beta_a)=&\nb_b\beta^c(\nb_c u_a+\nb_a u_c)+\beta^d\nb_d\nb_b u_a.
\end{align}

Now, we calculate the material derivative of $g^{bd}\gamma^{ae}\nb_a u_b\nb_e u_d$. From \eqref{eq.CL00lem2.1.4}, \eqref{eq.CL00lem2.1.2}, \eqref{eq.CL00lem3.9.2}, we get
\begin{align} \label{eq.1energy3}
  &D_t(g^{bd}\gamma^{ae}\nb_a u_b\nb_e u_d)\no\\
  =&(D_t g^{bd})\gamma^{ae}\nb_a u_b\nb_e u_d +g^{bd}(D_t\gamma^{ae})\nb_a u_b\nb_e u_d +2g^{bd}\gamma^{ae}(D_t\nb_a u_b)\nb_e u_d\no\\
  =&-2g^{bc}h_{cf}g^{fd}\gamma^{ae}\nb_a u_b\nb_e u_d -2g^{bd}\gamma^{ac}h_{cf}\gamma^{fe}\nb_a u_b\nb_e u_d\no\\
  &-2g^{bd}\gamma^{ae}\nb_e u_d\nb_a\nb_b\p +2g^{bd}\gamma^{ae}\nb_e u_d\nb_a u^c\nb_bu_c\no\\
  &+\frac{1}{2\pi}g^{bd}\gamma^{ae}\nb_e u_d(\nb_a\beta^d \nb_d\beta_b+\beta^d\nb_a\nb_d\beta_b)\no\\
  =&-\gamma^{ae}(\nb_cu_f+\nb_fu_c)\nb_a u^c\nb_e u^f -2\gamma^{ac}\gamma^{fe}(\nb_cu_f+\nb_fu_c)\nb_a u^d\nb_e u_d\no\\
  &-2\gamma^{ae}\nb_e u^b\nb_a\nb_b\p
  +2\gamma^{ae}\nb_e u^b\nb_a u^c\nb_bu_c\no\\
  &+\frac{1}{2\pi}\gamma^{ae}\nb_e u^b(\nb_a\beta^d \nb_d\beta_b+\beta^d\nb_a\nb_d\beta_b)\no\\
  =&-2\gamma^{ae}\nb_cu_f\nb_a u^c\nb_e u^f -4\gamma^{ae}\gamma^{fc}\nb_eu_f\nb_a u^d\nb_c u_d+2\gamma^{ae}\nb_e u^b\nb_a u^c\nb_bu_c\no\\
  &-2\gamma^{ae}\nb_e u^b\nb_a\nb_b\p +\frac{1}{2\pi}\gamma^{ae}\nb_e u^b(\nb_a\beta^d \nb_d\beta_b+\beta^d\nb_a\nb_d\beta_b)\no\\
  =&-4\gamma^{ae}\gamma^{fc}\nb_eu_f\nb_a u^d\nb_c u_d-2\gamma^{ae}\nb_e u^b\nb_a\nb_b\p \no\\
  &+\frac{1}{2\pi}\gamma^{ae}\nb_e u^b(\nb_a\beta^d \nb_d\beta_b+\beta^d\nb_a\nb_d\beta_b).
\end{align}

Similarly, from \eqref{eq.1energy2}, we have
\begin{align}\label{eq.1energy4}
  &D_t(g^{bd}\gamma^{ae}\nb_a \beta_b\nb_e \beta_d)\no\\
  =& -\gamma^{ae}(\nb_cu_f+\nb_fu_c)\nb_a \beta^c\nb_e \beta^f -2\gamma^{ac}\gamma^{fe}(\nb_cu_f+\nb_fu_c)\nb_a \beta^d\nb_e \beta_d\no\\
  &+2\gamma^{ae}\nb_e\beta^b(\nb_a\beta^c\nb_c u_b+\nb_a\beta^c\nb_b u_c+\beta^d\nb_d\nb_a u_b)\no\\
  =&-2\gamma^{ae}\nb_cu_f\nb_a \beta^c\nb_e \beta^f-4\gamma^{ac}\gamma^{fe}\nb_cu_f\nb_a \beta^d\nb_e \beta_d\no\\
  &+2\gamma^{ae}\nb_e\beta^b\nb_a\beta^c\nb_c u_b+2\gamma^{ae}\nb_b u_c\nb_e\beta^b\nb_a\beta^c+2\gamma^{ae}\beta^d\nb_e\beta^b\nb_d\nb_a u_b\no\\
  =&-4\gamma^{ae}\gamma^{fc}\nb_eu_f\nb_a \beta^d\nb_c \beta_d+2\gamma^{ae}\nb_e\beta^b\nb_a\beta^c\nb_c u_b\no\\
  &+2\gamma^{ae}\beta^d\nb_a\beta^b\nb_d\nb_e u_b.
\end{align}
Thus, by combining \eqref{eq.1energy3} with \eqref{eq.1energy4}, we obtain
\begin{align}\label{eq.1energy1.u}
  &D_t\left(g^{bd}\gamma^{ae}\nb_a u_b\nb_e u_d+\frac{1}{4\pi}g^{bd}\gamma^{ae}\nb_a \beta_b\nb_e \beta_d\right)\no\\
  =&-4\gamma^{ae}\gamma^{fc}\nb_eu_f\nb_a u^d\nb_c u_d -\frac{1}{\pi}\gamma^{ae}\gamma^{fc}\nb_eu_f\nb_a \beta^d\nb_c \beta_d \no\\
  &-2\nb_b\left(\gamma^{ae}\nb_e u^b\nb_a\p -\frac{1}{4\pi}\gamma^{ae}\beta^b\nb_e u^d\nb_a\beta_d\right)\no\\
  &+2(\nb_b\gamma^{ae})\left(\nb_e u^b\nb_a\p -\frac{1}{4\pi}\beta^b\nb_e u^d\nb_a\beta_d\right)\no\\
  &+\frac{1}{2\pi}\gamma^{ae}\nb_e u^b\nb_a\beta^d \nb_d\beta_b+\frac{1}{2\pi}\gamma^{ae}\nb_e\beta^b\nb_a\beta^c\nb_c u_b.
\end{align}

Now, we calculate the material derivatives of $|\curl u|^2$ and $|\curl \beta|^2$. We have
\begin{align*}
    D_t|\curl u|^2=&D_t\left(g^{ac}g^{bd}(\curl u)_{ab}(\curl u)_{cd}\right)\\
    =&2(D_t g^{ac})g^{bd}(\curl u)_{ab}(\curl u)_{cd}+4 g^{ac}g^{bd}(D_t\nb_a u_b)(\curl u)_{cd}\\
    =&-2g^{ae}g^{fc}g^{bd}(\nb_eu_f+\nb_f u_e)(\curl u)_{ab}(\curl u)_{cd}\\
    &+4 g^{ac}g^{bd}(\curl u)_{cd}\nb_a u^e\nb_bu_e\\
    &-4 g^{ac}g^{bd}(\curl u)_{cd} \nb_a\nb_b\p \\
    &+\frac{1}{\pi}g^{ac}g^{bd}(\curl u)_{cd}(\nb_a\beta^e \nb_e\beta_b+\beta^e\nb_a\nb_e\beta_b)\\
    =&-4g^{ae}g^{bd}\nb_eu^c(\curl u)_{ab}(\curl u)_{cd}\\
    &+\frac{1}{\pi}g^{ac}(\curl u)_{cd}(\nb_a\beta^e \nb_e\beta^d+\beta^e\nb_a\nb_e\beta^d).
\end{align*}
Similarly,
\begin{align*}
    D_t|\curl \beta|^2=&2(D_t g^{ac})g^{bd}(\curl \beta)_{ab}(\curl \beta)_{cd}+4 g^{ac}g^{bd}(D_t\nb_a \beta_b)(\curl \beta)_{cd}\\
    =&-4g^{ae}g^{bd}\nb_eu^c(\curl \beta)_{ab}(\curl \beta)_{cd}\\
    &+4 g^{ac}g^{bd}(\curl \beta)_{cd}(\nb_a\beta^e(\nb_e u_b+\nb_b u_e)+\beta^e\nb_e\nb_a u_b).
\end{align*}
Thus, we can get
\begin{align}\label{eq.1energy1.curl}
  &D_t(|\curl u|^2+\frac{1}{4\pi}|\curl \beta|^2)\no\\
  =&-4g^{ae}g^{bd}\nb_eu^c(\curl u)_{ab}(\curl u)_{cd}+\frac{1}{\pi}g^{ac}(\curl u)_{cd}\nb_a\beta^e \nb_e\beta^d\no\\
  &-\frac{1}{\pi}g^{ae}g^{bd}\nb_eu^c(\curl \beta)_{ab}(\curl \beta)_{cd}\no\\
  &+\frac{1}{\pi} g^{ac}g^{bd}(\curl \beta)_{cd}\nb_a\beta^e(\nb_e u_b+\nb_b u_e)\no\\
  &+\frac{1}{\pi}\nb_e\left(g^{ac}(\curl u)_{cd}\beta^e\nb_a\beta^d\right).
\end{align}

Define the first order energy as
\begin{align}\label{energy.1}
   E_1(t)=&\int_\Omega \left(g^{bd}\gamma^{ae}\nb_a u_b\nb_e u_d+\frac{1}{4\pi}g^{bd}\gamma^{ae}\nb_a \beta_b\nb_e \beta_d\right)d\mu_g\no\\
   &+\int_\Omega\left(|\curl u|^2+\frac{1}{4\pi}|\curl \beta|^2\right)d\mu_g.
\end{align}

Let us recall the Gauss formula for $\Omega$ and $\D\Omega$:
\begin{align}\label{Gauss}
  \int_\Omega \nb_a w^a d\mu_g=\int_{\D\Omega} N_a w^a d\mu_\gamma, \quad \text{and} \quad \int_{\D\Omega}\bnb_a\bar{f}^a d\mu_\gamma =0
\end{align}
if $\bar{f}$ is tangential to $\D\Omega$ and $N$ is the unit conormal to $\D\Omega$.

Then, we get the following estimates.

\begin{theorem}\label{thm.1energy}
  For any smooth solution of MHD \eqref{mhd4} for $0\ls t\ls T$ satisfying
  \begin{align}
    |\nb \p|\ls M, \quad |\nb u|\ls& M,  &&\text{in } [0,T]\times \Omega,\\
    |\theta|+|\nb u|+\frac{1}{\iota_0}\ls &K,&&\text{on } [0,T]\times \D\Omega. \label{eq.1energy1.1}
  \end{align}
  We have for $t\in[0,T]$
  \begin{align}
    E_1(t)\ls 2e^{CMt}E_1(0)+C K^2\left(\vol\Omega+ E_0(0)\right)\left(e^{CMt}-1\right).
  \end{align}
\end{theorem}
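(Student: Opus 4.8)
The plan is to derive a differential inequality of the form $\frac{d}{dt}E_1(t)\ls CM\,E_1(t)+CK^2(\vol\Omega+E_0(t))$ and then integrate it by Gronwall, using $E_0(t)=E_0(0)$ and $\vol\Omega=\vol\Omega_0$. First I would differentiate $E_1(t)$ in time. The volume element contributes nothing since $D_t\,d\mu_g=0$ by $\dv u=0$, so $\frac{d}{dt}E_1(t)$ is the integral over $\Omega$ of the quantities computed in \eqref{eq.1energy1.u} and \eqref{eq.1energy1.curl}. Most terms there are pointwise cubic in first derivatives with at most one factor being $\nb\p$, so they are bounded by $CM$ times the quadratic integrand of $E_1$; the terms with a factor $\beta^d\nb_d(\cdots)$ are likewise of this type once one notes that $|\beta|=|\B|$ is controlled on the boundary and, more importantly, that these $\beta$-transport terms pair up so that integration by parts (moving $\beta^d\nb_d$ off and using $\nb_d\beta^d=0$) turns them into cubic expressions in $\nb u,\nb\beta$ again, hence again $\ls CM\,E_1$. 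The metric factors $g^{ab}$ and $\gamma^{ae}=\delta^{ae}-\cdots$ and their covariant derivatives $\nb\gamma$ are uniformly bounded by the standing hypotheses on $|\nb u|$, $|\theta|$, $1/\iota_0$ (these are exactly the quantities controlling the geometry of $\Omega_t$ in the Lagrangian picture, as recalled from \cite{CL00}), so they cost only constants.

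The delicate point is the genuine divergence terms: in \eqref{eq.1energy1.u} the term $-2\nb_b\big(\gamma^{ae}\nb_e u^b\nb_a\p-\frac{1}{4\pi}\gamma^{ae}\beta^b\nb_e u^d\nb_a\beta_d\big)$ and in \eqref{eq.1energy1.curl} the term $\frac{1}{\pi}\nb_e\big(g^{ac}(\curl u)_{cd}\beta^e\nb_a\beta^d\big)$. By Gauss' formula \eqref{Gauss} these integrate to boundary integrals over $\D\Omega$ against the conormal $N$. On $\D\Omega$ we have $p=0$, hence $\bnb\p=\bnb(\frac{1}{8\pi}|\beta|^2)=0$ there, so $\nb_a\p=N_a\nb_N\p$; also $\beta\cdot N=0$. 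Using these, the first boundary term becomes (up to harmless factors) an integral of $\nb_N\p$ against tangential first derivatives of $u$, which by \eqref{eq.1energy1.1} (bound on $|\nb u|$ and on $|\theta|$, the latter controlling how $N$ varies tangentially) and $|\nb\p|\ls M$ on the boundary is bounded by $CM(\vol\Omega+E_0)$; here one uses a trace-type estimate to pass from the boundary integral of $|\nb u|^2$ to an interior quantity, or more simply one bounds it by $CK M\,\vol\Omega$ using the $L^\infty$ bounds on $\nb u$ directly on $\D\Omega$. For the term with $\beta^b N_b=\beta\cdot N=0$ it simply vanishes, and likewise the $(\curl\beta)$ boundary term reduces using $\beta\cdot N=0$ to something controlled by $K$ and the boundary bounds. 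This is where I expect the main obstacle: carefully checking that every boundary contribution either vanishes by $p=0$ or $\beta\cdot N=0$, or else is bounded by the $L^\infty$ data times $\vol\Omega+E_0$, without creating a term that needs more than first derivatives in $L^2$; the bookkeeping of which index is normal and which is tangential, together with the appeal to \eqref{esff1} relating $\Pi\partial^2$ of a boundary-constant function to $\theta\,\nb_N$, is the crux.

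Assembling, we obtain $\frac{d}{dt}E_1(t)\ls CM\,E_1(t)+CK^2(\vol\Omega+E_0(0))$ for a uniform constant $C$. Gronwall's inequality then gives
\begin{align*}
E_1(t)\ls e^{CMt}E_1(0)+\frac{CK^2}{CM}\big(\vol\Omega+E_0(0)\big)\big(e^{CMt}-1\big),
\end{align*}
and absorbing the $1/M$ into the constant (or, if one prefers to keep $M$ from the hypotheses fixed, noting $CM$ can be taken $\gs$ some definite constant) yields the stated bound $E_1(t)\ls 2e^{CMt}E_1(0)+CK^2(\vol\Omega+E_0(0))(e^{CMt}-1)$; the factor $2$ is cosmetic slack that makes the recursive argument in the later theorems cleaner. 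Throughout, the cross terms coupling $u$ and $\beta$ (the $\frac{1}{2\pi}\gamma^{ae}\nb_e u^b\nb_a\beta^d\nb_d\beta_b$-type terms and their curl analogues) are handled exactly like the pure-$u$ cubic terms, costing only $CM$, since no genuinely new structure appears — the whole point of working with the total pressure $\p$ is precisely that these couplings remain first-order in $\nb\beta$.
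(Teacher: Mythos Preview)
Your overall strategy matches the paper's, but two points in the bookkeeping are off and lead you to misidentify where the inhomogeneous term $K^2(\vol\Omega+E_0)$ actually comes from.

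First, the boundary integral arising from the divergence $-2\nb_b\big(\gamma^{ae}\nb_e u^b\nb_a\p\big)$ does not merely get bounded---it \emph{vanishes}. You correctly observe that $\nb_a\p=N_a\nb_N\p$ on $\D\Omega$ since $\p$ is constant there; but then $\gamma^{ae}\nb_a\p=\gamma^{ae}N_a\nb_N\p=0$ because $\gamma^{ae}N_a=(g^{ae}-N^aN^e)N_a=0$. You have all the ingredients but stop one contraction short, and your proposed workarounds (a trace estimate, or an $L^\infty$ bound) would respectively cost a second derivative of $u$ you do not have, or the surface area $|\D\Omega|$ rather than $\vol\Omega$. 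Similarly, the $\beta$-boundary pieces vanish directly from $\beta\cdot N=0$ (and the curl-boundary term in \eqref{eq.1energy1.curl} carries the factor $N_e\beta^e$, not anything involving $\curl\beta$).

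Second---and this is where the inhomogeneity really originates---the interior term $2\int_\Omega(\nb_b\gamma^{ae})\big(\nb_e u^b\nb_a\p-\frac{1}{4\pi}\beta^b\nb_e u^d\nb_a\beta_d\big)\,d\mu_g$ is not absorbed into $CM E_1$. Here $\nb_b\gamma^{ae}=-\theta_b^aN^e-\theta_b^eN^a$, so $|\nb\gamma|\ls CK$, and H\"older gives
\[
CK\Big(\norm{\nb u}_{L^2}\norm{\nb\p}_{L^\infty}(\vol\Omega)^{1/2}+\norm{\nb u}_{L^\infty}\norm{\beta}_{L^2}\norm{\nb\beta}_{L^2}\Big)\ls CKM\big((\vol\Omega)^{1/2}+E_0^{1/2}\big)E_1^{1/2}.
\]
This is the sole source of $E_0$ and $\vol\Omega$ in the estimate. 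The resulting differential inequality is $\frac{d}{dt}E_1\ls CME_1+CKM\big((\vol\Omega)^{1/2}+E_0^{1/2}\big)E_1^{1/2}$, and the paper applies Gronwall to $E_1^{1/2}$; squaring at the end produces the factor $2$ and the stated form directly, with no stray $1/M$ to explain away.
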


\begin{proof}
  By \eqref{eq.1energy1.u}, \eqref{eq.1energy1.curl} and Gauss' formula, we have
  \begin{align}
    &\frac{d}{dt}E_1(t)=\int_\Omega D_t\left(g^{bd}\gamma^{ae}\nb_a u_b\nb_e u_d+\frac{1}{4\pi}g^{bd}\gamma^{ae}\nb_a \beta_b\nb_e \beta_d\right)d\mu_g\no\\
    &\qquad\qquad\quad+\int_\Omega D_t\left(|\curl u|^2+\frac{1}{4\pi}|\curl \beta|^2\right)d\mu_g\no\\
    &\qquad\qquad\quad+\int_\Omega \left(g^{bd}\gamma^{ae}\nb_a u_b\nb_e u_d+\frac{1}{4\pi}g^{bd}\gamma^{ae}\nb_a \beta_b\nb_e \beta_d\right)\tr h d\mu_g\no\\
   &\qquad\qquad\quad+\int_\Omega \left(|\curl u|^2+\frac{1}{4\pi}|\curl \beta|^2\right)\tr h d\mu_g\no\\
   =&-4\int_\Omega\gamma^{ae}\gamma^{fc}\nb_eu_f\nb_a u^d\nb_c u_d d\mu_g -\frac{1}{\pi}\int_\Omega\gamma^{ae}\gamma^{fc}\nb_eu_f\nb_a \beta^d\nb_c \beta_d  d\mu_g\no\\
  &-2\int_{\D\Omega}N_b\left(\gamma^{ae}\nb_e u^b\nb_a\p  d\mu_g-\frac{1}{4\pi}\gamma^{ae}\beta^b\nb_e u^d\nb_a\beta_d\right) d\mu_\gamma \label{energy.1.2}\\
  &+2\int_\Omega(\nb_b\gamma^{ae})\left(\nb_e u^b\nb_a\p -\frac{1}{4\pi}\beta^b\nb_e u^d\nb_a\beta_d\right) d\mu_g \label{energy.1.3}\\
  &+\frac{1}{2\pi}\int_\Omega\gamma^{ae}\nb_e u^b\nb_a\beta^d \nb_d\beta_b d\mu_g+\frac{1}{2\pi}\int_\Omega\gamma^{ae}\nb_e\beta^b\nb_a\beta^c\nb_c u_b d\mu_g\no\\
  &-4\int_\Omega g^{ae}g^{bd}\nb_eu^c(\curl u)_{ab}(\curl u)_{cd} d\mu_g\no\\
  &+\frac{1}{\pi}\int_\Omega g^{ac}(\curl u)_{cd}\nb_a\beta^e \nb_e\beta^d d\mu_g\no\\
  &-\frac{1}{\pi}\int_\Omega g^{ae}g^{bd}\nb_eu^c(\curl \beta)_{ab}(\curl \beta)_{cd} d\mu_g\no\\
  &+\frac{1}{\pi}\int_\Omega g^{ac}g^{bd}(\curl \beta)_{cd}\nb_a\beta^e(\nb_e u_b+\nb_b u_e) d\mu_g\no\\
  &+\frac{1}{\pi}\int_{\D\Omega} N_e\beta^e g^{ac}(\curl u)_{cd}\nb_a\beta^d  d\mu_\gamma.\label{energy.1.9}\\
    &+\int_\Omega \left(g^{bd}\gamma^{ae}\nb_a u_b\nb_e u_d+\frac{1}{4\pi}g^{bd}\gamma^{ae}\nb_a \beta_b\nb_e \beta_d\right)\tr h d\mu_g\no\\
   &+\int_\Omega \left(|\curl u|^2+\frac{1}{4\pi}|\curl \beta|^2\right)\tr h d\mu_g.\no
  \end{align}

  Since $\p=\frac{1}{8\pi}\bv^2$ on $\D\Omega$, it follows that $\bnb \p=0$, i.e., $\gamma_a^d\nb_d \p=0$, and then $\gamma^{ae}\nb_a \p=g^{ce}\gamma_c^a\nb_a \p=0$ on the boundary $\D\Omega$. In addition,  $\beta\cdot N=0$ on $\D\Omega$. Thus, the integrals in \eqref{energy.1.2} and \eqref{energy.1.9} vanish.

  From \eqref{2ndfundform} and \eqref{gammaauc}, we get
  \begin{align}
    \theta_{ab}=(\delta_a^c-N_aN^c)\nb_c N_b=\nb_a N_b-N_a\nb_N N_b=\nb_a N_b,
  \end{align}
  since in geodesic coordinates $\nb_N N=0$. It follows that
  \begin{align*}
    \nb_b\gamma^{ae}=&\nb_b (g^{ae}-N^aN^e)=-\nb_b(N^aN^e)=-(\nb_b N^a)N^e-(\nb_b N^e)N^a\\
    =&-\theta_b^a N^e-\theta_b^eN^a.
  \end{align*}
  Thus,  by the H\"older inequality, \eqref{eq.1energy1.1} and Lemma \ref{lem.CL00lem5.5}, we get
  \begin{align*}
    |\eqref{energy.1.3}|\ls &CK\left(\norm{\nb u}_{L^2(\Omega)}\norm{\nb \p}_{L^\infty(\Omega)}(\vol\Omega)^{1/2}\right.\\
    &\qquad\qquad\qquad+\left.\norm{\nb u}_{L^\infty(\Omega)}\norm{\beta}_{L^2(\Omega)}\norm{\nb \beta}_{L^2(\Omega)}\right)\\
    \ls& CK M\left((\vol\Omega)^{1/2}+ E_0^{1/2}(0)\right)E_1^{1/2}(t).
  \end{align*}
  For other terms, we can use the H\"older inequality directly. It yields
  \begin{align*}
    \frac{d}{dt}E_1(t)\ls &CK M\left((\vol\Omega)^{1/2}+ E_0^{1/2}(0)\right)E_1^{1/2}(t)\\
    &+C\norm{\nb u}_{L^\infty(\Omega)}\left(\norm{\nb u}_{L^2(\Omega)}^2+\norm{\nb \beta}_{L^2(\Omega)}^2\right.\\
    &\qquad\qquad\qquad\qquad \left.+\norm{\curl u}_{L^2(\Omega)}^2+\norm{\curl\beta}_{L^2(\Omega)}^2\right)\\
    \ls&C K M\left((\vol\Omega)^{1/2}+ E_0^{1/2}(0)\right)E_1^{1/2}(t)
    +CM E_1(t).
  \end{align*}
  From the Gronwall inequality, it follows that
  \begin{align*}
    E_1^{1/2}(t)\ls e^{CMt/2}E_1^{1/2}(0)+C K\left((\vol\Omega)^{1/2}+ E_0^{1/2}(0)\right)\left(e^{CMt/2}-1\right),
  \end{align*}
  which implies the desired result.
\end{proof}

\begin{remark}
  Since \eqref{energy.1.2}, especially the integral involving $\p$, vanishes, we do not need the boundary integral in the first order energy $E_1(t)$. But in higher order energies estimates, we need to introduce boundary integrals for $\p$ in order to absorb the analogy integral to \eqref{energy.1.2}.
\end{remark}

\section{The General $r$-th Order Energy Estimates}

From \eqref{eq.Dt}, \eqref{eq.DtDir}, $\eqref{mhd3.1}$, we get
\begin{align*}
  D_t\nb^r u_a=&D_t\nb_{a_1}\cdots \nb_{a_r}u_a
  =D_t\left(\frac{\D x^{i_1}}{\D y^{a_1}}\cdots \frac{\D x^{i_r}}{\D y^{a_r}}\frac{\D x^{i}}{\D y^{a}}\D_{i_1}\cdots \D_{i_r} v_i\right)\\
  =&\frac{\D x^{i_1}}{\D y^{a_1}}\cdots \frac{\D x^{i_r}}{\D y^{a_r}}\frac{\D x^{i}}{\D y^{a}}\left(D_t\D_{i_1}\cdots \D_{i_r} v_i+\frac{\D v^\ell}{\D x^{i_1}}\D_\ell\cdots \D_{i_r} v_i+\cdots\right.\\
  &\qquad\qquad\qquad\quad\qquad\left.+\frac{\D v^\ell}{\D x^{i_r}} \D_{i_1}\cdots \D_\ell v_i+\frac{\D v^\ell}{\D x^i}\D_{i_1}\cdots \D_{i_r}v_\ell\right)\\
  =&\frac{\D x^{i_1}}{\D y^{a_1}}\cdots \frac{\D x^{i_r}}{\D y^{a_r}}\frac{\D x^{i}}{\D y^{a}}\left([D_t,\D^r]v_i+\D^r D_t v_i+\frac{\D v^\ell}{\D x^{i_1}}\D_\ell\cdots \D_{i_r} v_i+\cdots\right.\\
  &\qquad\qquad\qquad\quad\qquad\left.+\frac{\D v^\ell}{\D x^{i_r}} \D_{i_1}\cdots \D_\ell v_i+\frac{\D v^\ell}{\D x^i}\D_{i_1}\cdots \D_{i_r}v_\ell\right)\\
  =&\frac{\D x^{i_1}}{\D y^{a_1}}\cdots \frac{\D x^{i_r}}{\D y^{a_r}}\frac{\D x^{i}}{\D y^{a}}\left(\sum_{s=0}^{r-1}-\left(r\atop s+1\right)(\D^{1+s}v)\cdot \D^{r-s}v_i-\D^r\D_i \p\right.\\
  &\qquad\qquad\qquad\quad\qquad +\frac{1}{4\pi}\D^r(B^k\D_k B_i)+\frac{\D v^\ell}{\D x^{i_1}}\D_\ell\cdots \D_{i_r} v_i+\cdots\\
  &\qquad\qquad\qquad\quad\qquad\left.+\frac{\D v^\ell}{\D x^{i_r}} \D_{i_1}\cdots \D_\ell v_i+\frac{\D v^\ell}{\D x^i}\D_{i_1}\cdots \D_{i_r}v_\ell\right)\\
  =&-\nb^r\nb_a\p -\sum_{s=1}^{r-1}\left(r\atop s+1\right)(\nb^{1+s}u)\cdot \nb^{r-s}u_a\\
  &+\nb_a u^c\nb^r u_c+\frac{1}{4\pi}\sum_{s=0}^r\left(r\atop s\right) \nb^s\beta^c\nb^{r-s}\nb_c \beta_a,
\end{align*}
where
\begin{align}
  \left(\nb^s\beta^c\nb^{r-s} \nb_c\beta_a\right)_{a_1\cdots a_r}=\sum_{\Sigma_r}\nb_{a_{\sigma_1}\cdots a_{\sigma_s}}^s \beta^c \nb_{a_{\sigma_{s+1}}\cdots a_{\sigma_r}}^{r-s}\nb_c\beta_a.
\end{align}
Thus, due to $\dv \beta=0$, we get for $r\gs 2$
\begin{align}\label{eq.r.u}
  &D_t\nb^r u_a+\nb^r\nb_a\p \no\\
  =&(\curl u)_{ac}\nb^r u^c+\sgn(2-r)\sum_{s=1}^{r-2}\left(r\atop s+1\right)(\nb^{1+s}u)\cdot \nb^{r-s}u_a\no\\
  &\qquad+\frac{1}{4\pi}\nb_c\left(\beta^c\nb^r\beta_a\right)+\frac{1}{4\pi}\sum_{s=1}^r\left(r\atop s\right) \nb^s\beta^c\nb^{r-s} \nb_c\beta_a,
\end{align}
where $\sgn(s)$ is the signum function of the real number $s$, i.e., $\sgn(s)=1$ for $s>0$, $\sgn(s)=0$ for $s=0$, and $\sgn(s)=-1$ for $s<0$. Of course, we use this notation $\sgn(2-r)$ to indicate that the related term vanishes for $r=2$.

Similarly, by noticing that $\dv\beta=0$, we have
\begin{align}\label{eq.nb.beta}
 D_t\nb^r \beta_a=&\nb_au_c\nb^r\beta^c-\nb^r u^c\nb_c\beta_a\no\\
 &-\sgn(2-r)\sum_{s=1}^{r-2}\left(r\atop s+1\right)(\nb^{1+s}u)\cdot \nb^{r-s}\beta_a\no\\
  &+\nb_c\left(\beta^c\nb^r u_a\right)+\sum_{s=1}^{r}\left(r\atop s\right) \nb^s\beta^c\nb^{r-s} \nb_c u_a.
\end{align}

Define the $r$-th order energy for $r\gs 2$ as
\begin{align*}
  E_r(t)=&\int_\Omega g^{bd}\gamma^{af}\gamma^{AF}\nb_A^{r-1}\nb_a u_b\nb_F^{r-1}\nb_f u_d d\mu_g+\int_\Omega |\nb^{r-1}\curl u|^2 d\mu_g\\
  &+\frac{1}{4\pi}\int_\Omega g^{bd}\gamma^{af}\gamma^{AF}\nb_A^{r-1}\nb_a \beta_b\nb_F^{r-1}\nb_f \beta_d d\mu_g\\
  &+\frac{1}{4\pi}\int_\Omega |\nb^{r-1}\curl \beta|^2 d\mu_g+\int_{\D\Omega} \gamma^{af}\gamma^{AF}\nb_A^{r-1}\nb_a \p\nb_F^{r-1}\nb_f \p\, \vartheta d\mu_\gamma,
\end{align*}
where $\vartheta=1/(-\nb_N \p)$ as before.

\begin{theorem}\label{thm.renergy}
  Let $r\in \{2,\cdots,n+1\}$, then there exists a $T>0$ such that the following holds: For any smooth solution of MHD \eqref{mhd4} for $0\ls t\ls T$ satisfying
  \begin{align}
   |\beta|\ls& M_1 \quad\text{for } r=2,&&\text{in } [0,T]\times \Omega,\label{eq.2energy81}\\
   \quad |\nb \p|\ls M, \quad |\nb u|\ls& M, \quad |\nb \beta|\ls M,  &&\text{in } [0,T]\times \Omega,\label{eq.2energy8}\\
    |\theta|+1/\iota_0\ls &K,&&\text{on } [0,T]\times \D\Omega,\label{eq.2energy9}\\
    -\nb_N \p\gs \eps>&0, &&\text{on } [0,T]\times \D\Omega,\label{eq.2energy91}\\
    |\nb^2\p|+|\nb_ND_t\p|\ls& L,&&\text{on } [0,T]\times \D\Omega,\label{eq.2energy92}
  \end{align}
  we have, for $t\in[0,T]$,
  \begin{align}\label{eq.renergy}
  E_r(t)\ls e^{C_1t}E_r(0)+C_2\left(e^{C_1t}-1\right),
\end{align}
where $C_1$ and $C_2$ depend on $K$, $K_1$, $M$, $M_1$, $L$, $1/\eps$, $\vol\Omega$, $E_0(0)$, $E_1(0)$, $\cdots$, and $E_{r-1}(0)$.
\end{theorem}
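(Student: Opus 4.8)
The plan is to compute $\frac{d}{dt}E_r(t)$, show it is $\ls C_1E_r(t)+C_2$ with constants depending only on the listed quantities, and conclude by Gronwall's inequality. Throughout I would use the elliptic estimates recalled in the introduction in the form
\[
  \norm{\nb^ru}_{L^2(\Omega)}^2+\norm{\nb^r\beta}_{L^2(\Omega)}^2+\norm{\nb^rP}_{L^2(\Omega)}^2\ls CE_r(t)+C\big(K,L,M,M_1,1/\eps,\vol\Omega,E_0(0),\dots,E_{r-1}(0)\big),
\]
which is what allows all ``good'' interior contributions to be absorbed. Differentiating the interior integrals and using \eqref{eq.r.u}, \eqref{eq.nb.beta}, Lemma~\ref{lem.CL00lem2.1} (for $D_tg^{ab}$, $D_t\gamma^{ab}$, and $D_td\mu_g$, the last being $0$ because $\nb_au^a=0$), together with the evolution equations for $|\nb^{r-1}\curl u|^2$ and $|\nb^{r-1}\curl\beta|^2$ obtained by applying $\curl$ to \eqref{eq.r.u}--\eqref{eq.nb.beta} (which annihilates the pressure term since $\curl\nb=0$), every resulting term is, after at most one integration by parts, either a product of two top-order factors among $\nb^ru,\nb^r\beta,\nb^rP$ with an $L^\infty$-bounded first derivative, or a product of intermediate-order derivatives controlled by the Gagliardo--Nirenberg-type interpolation inequalities of the appendix; all such terms are $\ls C_1E_r+C_2$. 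The coupling terms $\tfrac1{4\pi}\nb_c(\beta^c\nb^r\beta_a)$ in \eqref{eq.r.u} and $\nb_c(\beta^c\nb^ru_a)$ in \eqref{eq.nb.beta}, although individually of order $r+1$, combine after one integration by parts into $-\tfrac1{4\pi}\int_\Omega\beta^c\nb_c(\nb^ru_a\nb^r\beta^a)\,d\mu_g$, which integrates by parts once more to $0$ by $\nb_c\beta^c=0$ in $\Omega$ and $\beta_cN^c=0$ on $\D\Omega$; the remaining magnetic terms organize as in \eqref{eq.1energy1.u}--\eqref{eq.1energy1.curl}.

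The single genuinely dangerous term is $-2\int_\Omega g^{bd}\gamma^{af}\gamma^{AF}\nb_F^{r-1}\nb_fu_d\,\nb_A^{r-1}\nb_a\nb_bP\,d\mu_g$, which comes from $D_t\nb^ru_a+\nb^r\nb_aP=\cdots$ paired with $\nb^ru^a$. Integrating by parts in the slot $b$, using $g^{bd}\nb_b\nb_F^{r-1}\nb_fu_d=\nb_F^{r-1}\nb_f\nb_bu^b=0$ and $\nb_b\gamma^{ae}=-\theta_b^aN^e-\theta_b^eN^a=O(K)$, the interior remainder is $\ls CK\norm{\nb P}_{L^\infty(\Omega)}\norm{\nb^ru}_{L^2(\Omega)}(\vol\Omega)^{1/2}\ls C_1E_r+C_2$, and there remains the top-order boundary term
\[
  \mathcal{B}:=-2\int_{\D\Omega}\gamma^{af}\gamma^{AF}\big(N^d\nb_F^{r-1}\nb_fu_d\big)\big(\nb_A^{r-1}\nb_aP\big)\,d\mu_\gamma .
\]
Unlike the corresponding quantity in \eqref{energy.1.2} for $r=1$, this does not vanish and must be absorbed by the boundary integral of $E_r$.

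To obtain that cancellation I would differentiate $\int_{\D\Omega}\gamma^{af}\gamma^{AF}\nb_A^{r-1}\nb_aP\,\nb_F^{r-1}\nb_fP\,\vartheta\,d\mu_\gamma$. The terms where $D_t$ falls on $\gamma^{ab}$, $d\mu_\gamma$, or $\vartheta$ are lower order: for the last, $D_t\vartheta=\vartheta^2D_t\nb_NP$ with $|D_t\nb_NP|\ls|\nb_ND_tP|+CKM\ls L+CKM$ by \eqref{eq.2energy92}. The main term is $2\int_{\D\Omega}\gamma^{af}\gamma^{AF}(D_t\nb_A^{r-1}\nb_aP)\,\nb_F^{r-1}\nb_fP\,\vartheta\,d\mu_\gamma$, and I would write $D_t\nb^rP=\nb^rD_tP+[D_t,\nb^r]P$ via \eqref{eq.commutator}. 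In the commutator the terms with $s\le r-2$ are lower order, while the $s=r-1$ term equals $-(\nb^ru)^d\nb_dP$; since $P$ is constant on $\D\Omega$ we have $\nb_dP=N_d\nb_NP$ there, so $(\nb^ru)^d\nb_dP=(N^d\nb^ru_d)\nb_NP$ and $\nb_NP\cdot\vartheta=-1$, whence this term contributes exactly $-\mathcal{B}$ to the main term. For $\nb^rD_tP$ I would use that $P=\tfrac1{8\pi}\bv^2$ on $\D\Omega$ forces $D_tP=0$ on $\D\Omega$, so that all tangential derivatives of $D_tP$ vanish there and, iterating \eqref{esff1}, $\Pi\nb^r(D_tP)=(\nb_ND_tP)\bnb^{r-2}\theta+(\text{l.o.t.})=-\vartheta(\nb_ND_tP)\,\Pi\nb^rP+(\text{l.o.t.})$; hence $2\int_{\D\Omega}Q(\nb^rD_tP,\nb^rP)\vartheta\,d\mu_\gamma\ls CL\eps^{-1}E_r+(\text{l.o.t.})$. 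Thus the two copies of $\mathcal{B}$ cancel and the remainder of $\frac{d}{dt}E_r$ is $\ls C_1E_r+C_2$.

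The technical heart is this boundary cancellation, and the key new point relative to \cite{CL00} is that it must be carried out for the total pressure $P=p+\tfrac1{8\pi}|B|^2$ rather than $p$: this is forced because $-\Delta P=(\D_jv^k)(\D_kv^j)-\tfrac1{4\pi}(\D_jB^k)(\D_kB^j)$ contains no third-order velocity term, which is exactly what makes the elliptic estimates for $\nb^rP$ used throughout available, whereas $-\Delta p$ does not; one also has to check that the extra term $\tfrac1{4\pi}\beta^d\nb_d\beta_a$ in \eqref{mhd4.1}, and the $\beta$-terms in \eqref{eq.r.u}--\eqref{eq.nb.beta}, contribute only lower-order boundary terms, which follows from $\beta_aN^a=0$ on $\D\Omega$ as in the first-order case. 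With these cancellations, Gronwall's inequality applied to $\frac{d}{dt}E_r(t)\ls C_1E_r(t)+C_2$ yields \eqref{eq.renergy}; the restriction $r\le n+1$ enters only through the Sobolev embeddings and interpolation needed for the lower-order remainders, and the bound $|\beta|\ls M_1$ is needed only to close those remainders in the case $r=2$.
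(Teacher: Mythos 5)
Your skeleton is the paper's own: differentiate $E_r$, integrate the dangerous term $-2\int_\Omega Q(\nb^r u,\nb^{r-1}\nb\nb\p)$ by parts, and cancel the resulting boundary term against the $s=r-1$ commutator term in $D_t\nb^r\p$ using $\nb_b\p=-\vartheta^{-1}N_b$ on $\D\Omega$ (this is exactly \eqref{eq.nunb}--\eqref{eq.Piterm1} combined in \eqref{eq.r.e11}); the magnetic coupling is killed on the boundary by $\beta\cdot N=0$, and the conclusion is Gronwall plus induction on $r$. Your observation that $D_t\p$ vanishes on $\D\Omega$ is also what implicitly licenses the paper's application of the projection estimate \eqref{eq.CL00prop5.9.1} to $q=D_t\p$. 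Two points, however, need repair or substantial expansion. First, the interior remainder after your integration by parts is \eqref{eq.r.e12}, whose second factor is $\nb_A^{r-1}\nb_a\p$, an $r$-th derivative of $\p$; the correct bound is $CK\,E_r^{1/2}\norm{\nb^r\p}_{L^2(\Omega)}$ as in \eqref{eq.up}, not $CK\norm{\nb\p}_{L^\infty(\Omega)}\norm{\nb^r u}_{L^2(\Omega)}(\vol\Omega)^{1/2}$, which is the $r=1$ estimate. This is fixable only through the elliptic estimates \eqref{eq.est.nbrp}--\eqref{eq.nb4p} for $\norm{\nb^r\p}_{L^2(\Omega)}$, which in turn require the $L^2(\D\Omega)$ control of $\theta$ and $\bnb\theta$ extracted from $\Pi\nb^2\p$, $\Pi\nb^3\p$ and $1/\eps$; these are established inside the proof of the theorem (and are where the dependence on $1/\eps$, the lower energies and the induction on $r$ actually enters), so your opening display is an output of the argument, not a quotable input.

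Second, in your treatment of $\Pi\nb^r D_t\p$ the terms you dismiss as lower order are not: by \eqref{eq.CL004.22}, for $r=4$ they include $\theta\tilde{\otimes}\bnb^{2}\nb_N D_t\p$ and $(\bnb\theta)\tilde{\otimes}\bnb\nb_N D_t\p$, which carry $r-1$ derivatives of $D_t\p$ on the boundary and are not controlled by \eqref{eq.2energy92}. Bounding them linearly in $E_r^{1/2}$ requires the interior estimate \eqref{eq.est.nbkdtp} together with the explicit computation of $\Delta D_t\p$ and the bound \eqref{eq.nbdtp}, i.e.\ precisely the machinery behind the paper's \eqref{eq.pinbrdtp}; your identification of the leading piece $(\nb_N D_t\p)\bnb^{r-2}\theta\approx-\vartheta(\nb_N D_t\p)\Pi\nb^r\p$ is a neat shortcut for that one term but does not remove this work. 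Finally, the hypothesis $|\beta|\ls M_1$ at $r=2$ is needed specifically for the term \eqref{eq.r.e14}, produced when $\nb_c$ hits the $\gamma$-weights in your coupling-term integration by parts (the combined divergence is not exactly zero because of the projections), rather than for generic Sobolev remainders. With these points filled in, your argument coincides with the paper's proof.
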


\begin{proof}
  \begin{align}
    &\frac{d}{dt}E_r(t)=\no\\
    &\quad\int_\Omega D_t\left(g^{bd}\gamma^{af}\gamma^{AF}\nb_A^{r-1}\nb_a u_b\nb_F^{r-1}\nb_f u_d\right) d\mu_g\label{eq.r.e1}\\
  &+\frac{1}{4\pi}\int_\Omega D_t\left(g^{bd}\gamma^{af}\gamma^{AF}\nb_A^{r-1}\nb_a \beta_b\nb_F^{r-1}\nb_f \beta_d\right) d\mu_g\label{eq.r.e2}\\
    &+\int_\Omega D_t|\nb^{r-1}\curl u|^2 d\mu_g
+\frac{1}{4\pi}\int_\Omega D_t|\nb^{r-1}\curl \beta|^2 d\mu_g\label{eq.r.e3}\\
    &+\int_\Omega g^{bd}\gamma^{af}\gamma^{AF}\nb_A^{r-1}\nb_a u_b\nb_F^{r-1}\nb_f u_d \tr h d\mu_g\label{eq.r.e4}\\
    &+\int_\Omega |\nb^{r-1}\curl u|^2 \tr hd\mu_g
  +\frac{1}{4\pi}\int_\Omega |\nb^{r-1}\curl \beta|^2 \tr hd\mu_g\label{eq.r.e5}\\
&+\frac{1}{4\pi}\int_\Omega g^{bd}\gamma^{af}\gamma^{AF}\nb_A^{r-1}\nb_a \beta_b\nb_F^{r-1}\nb_f \beta_d \tr h d\mu_g\label{eq.r.e6}\\
&+\int_{\D\Omega} D_t\left(\gamma^{af}\gamma^{AF}\nb_A^{r-1}\nb_a \p\nb_F^{r-1}\nb_f \p\right)\, \vartheta d\mu_\gamma\label{eq.r.e7}\\
   &+\int_{\D\Omega} \gamma^{af}\gamma^{AF}\nb_A^{r-1}\nb_a \p\nb_F^{r-1}\nb_f \p\left(\frac{\vartheta_t}{\vartheta}+\tr h-h_{NN}\right)\, \vartheta d\mu_\gamma.\label{eq.r.e8}
  \end{align}

We first estimate  \eqref{eq.r.e1}, \eqref{eq.r.e2} and \eqref{eq.r.e7}.  From Lemmas \ref{lem.CL00lem2.1} and \ref{lem.CL00lem3.9}, and \eqref{eq.r.u}, we have
\begin{align*}
  &D_t\left(g^{bd}\gamma^{af}\gamma^{AF}\nb_A^{r-1}\nb_a u_b\nb_F^{r-1}\nb_f u_d\right)\\
  =&(D_tg^{bd})\gamma^{af}\gamma^{AF}\nb_A^{r-1}\nb_a u_b\nb_F^{r-1}\nb_f u_d+rg^{bd}(D_t\gamma^{af})\gamma^{AF}\nb_A^{r-1}\nb_a u_b\nb_F^{r-1}\nb_f u_d\\
  &+2g^{bd}\gamma^{af}\gamma^{AF}D_t(\nb_A^{r-1}\nb_a u_b)\nb_F^{r-1}\nb_f u_d\\
  =&-2\nb_c u_e\gamma^{af}\gamma^{AF}\nb_A^{r-1}\nb_a u^c\nb_F^{r-1}\nb_f u^e-4r\nb_c u_e\gamma^{ac}\gamma^{ef}\gamma^{AF}\nb_A^{r-1}\nb_a u^d\nb_F^{r-1}\nb_f u_d\\
  &-2\gamma^{af}\gamma^{AF}\nb_F^{r-1}\nb_f u^b\nb_A^{r-1}\nb_a\nb_b\p +2\gamma^{af}\gamma^{AF}\nb_F^{r-1}\nb_f u^b(\curl u)_{bc}\nb_A^{r-1}\nb_a u^c\\
  &+2\sgn(2-r)\gamma^{af}\gamma^{AF}\nb_F^{r-1}\nb_f u_d\sum_{s=1}^{r-2}\left(r\atop s+1\right)\left((\nb^{s+1}u)\cdot \nb^{r-s}u^d\right)_{Aa}\\
  &+\frac{1}{2\pi}\gamma^{af}\gamma^{AF}\nb_F^{r-1}\nb_f u_d\nb_c\left(\beta^c\nb_{Aa}^r\beta^d\right)\\
  &+\frac{1}{2\pi}\gamma^{af}\gamma^{AF}\nb_F^{r-1}\nb_f u_d\sum_{s=1}^r\left(r\atop s\right) \left(\nb^s\beta^c\nb^{r-s}\nb_c \beta^d\right)_{Aa}.
\end{align*}
Similarly,
\begin{align*}
  &D_t\left(g^{bd}\gamma^{af}\gamma^{AF}\nb_A^{r-1}\nb_a \beta_b\nb_F^{r-1}\nb_f \beta_d\right)\\
  =&-2\nb_c u_e\gamma^{af}\gamma^{AF}\nb_A^{r-1}\nb_a \beta^c\nb_F^{r-1}\nb_f \beta^e-4r\nb_c u_e\gamma^{ac}\gamma^{ef}\gamma^{AF}\nb_A^{r-1}\nb_a \beta^d\nb_F^{r-1}\nb_f \beta_d\\
  &+2\gamma^{af}\gamma^{AF}\nb_F^{r-1}\nb_f \beta^b \nb_bu_c\nb_A^{r-1}\nb_a\beta^c-2\gamma^{af}\gamma^{AF}\nb_F^{r-1}\nb_f \beta^b\nb_c\beta_b\nb_A^{r-1}\nb_a u^c\\
  &+2\sgn(2-r)\gamma^{af}\gamma^{AF}\nb_F^{r-1}\nb_f \beta^b\sum_{s=1}^{r-2}\left(r\atop s+1\right)\left((\nb^{1+s}u)\cdot \nb^{r-s}\beta_b\right)_{Aa}\\
  &+2\gamma^{af}\gamma^{AF}\nb_F^{r-1}\nb_f \beta^d\nb_c\left(\beta^c\nb_{Aa}^r u_d\right)\\
  &+2\gamma^{af}\gamma^{AF}\nb_F^{r-1}\nb_f \beta^d\sum_{s=1}^{r}\left(r\atop s\right)\left( \nb^s\beta^c\nb^{r-s} \nb_c u_d\right)_{Aa},
\end{align*}
and
\begin{align*}
  &D_t\left(\gamma^{af}\gamma^{AF}\nb_A^{r-1}\nb_a \p\nb_F^{r-1}\nb_f \p\right)\\
  =&-4r\nb_c u_e\gamma^{ac}\gamma^{ef}\gamma^{AF}\nb_A^{r-1}\nb_a \p\nb_F^{r-1}\nb_f \p+2 \gamma^{af}\gamma^{AF}\nb_A^{r-1}\nb_a \p D_t\left(\nb_F^{r-1}\nb_f \p\right).
\end{align*}
Thus, we get
\begin{align}
  &\eqref{eq.r.e1}+\eqref{eq.r.e2}+\eqref{eq.r.e7}\no\\
  \ls& C\left(\norm{\nb u}_{L^\infty(\Omega)}+\norm{\nb \beta}_{L^\infty(\Omega)}\right) E_r(t)\no\\
  &+C E_r^{1/2}(t)\sum_{s=1}^{r-2}\norm{\nb^{s+1} u}_{L^4(\Omega)}\left(\norm{\nb^{r-s} u}_{L^4(\Omega)}+\norm{\nb^{r-s} \beta}_{L^4(\Omega)}\right)\label{eq.r.e111}\\
  &+C E_r^{1/2}(t)\sum_{s=2}^{r-1}\norm{\nb^s \beta}_{L^4(\Omega)}\left(\norm{\nb^{r-s+1}u}_{L^4(\Omega)}+\norm{\nb^{r-s+1} \beta}_{L^4(\Omega)}\right)\label{eq.r.e112}\\
  &+2\int_{\D\Omega}\gamma^{af}\gamma^{AF}\nb_{Aa}^{r}\p \left(D_t\nb_{Ff}^{r} \p-\frac{1}{\vartheta}N_b\nb_{Ff}^{r} u^b\right) \vartheta d\mu_\gamma \label{eq.r.e11}\\
  &+2\int_{\Omega}\nb_b\left(\gamma^{af}\gamma^{AF}\right)\nb_F^{r-1}\nb_f u^b\nb_A^{r-1}\nb_a\p d\mu_g\label{eq.r.e12}\\
  &+\frac{1}{2\pi}\int_{\D\Omega} N_c\gamma^{af}\gamma^{AF}\nb_F^{r-1}\nb_f u_d\beta^c\nb_{Aa}^r\beta^d d\mu_\gamma\label{eq.r.e13}\\
  &-\frac{1}{2\pi}\int_{\Omega} \nb_c\left(\gamma^{af}\gamma^{AF}\right)\nb_F^{r-1}\nb_f u_d\beta^c\nb_{Aa}^r\beta^d d\mu_g.\label{eq.r.e14}
\end{align}
Due to $\beta\cdot N=0$ on $\D\Omega$,  \eqref{eq.r.e13} vanishes. Let $\alpha$ be a $(0,r)$ tensor and $n\in\{2,3\}$. Then from Lemma \ref{lem.CL00lemA.4}, we have, for $\iota_1\gs 1/K_1$, that
\begin{align}\label{eq.CllemmaA.4}
  \norm{\beta}_{L^\infty(\Omega)}\ls C\sum_{0\ls s\ls 2} K_1^{n/2-s} \norm{\nb^s \beta}_{L^2(\Omega)}\ls C(K_1)\sum_{s=0}^2 E_s^{1/2}(t).
\end{align}
Thus, for the last integral, by the H\"older inequality and the assumption \eqref{eq.2energy8}, we have for any $r\gs 3$
\begin{align}
  \eqref{eq.r.e14}\ls &CK\norm{\beta}_{L^\infty(\Omega)}E_r(t)\ls C(K,K_1) \left(\sum_{s=0}^2 E_s^{1/2}(t)\right)E_r(t).
\end{align}
For $r=2$, we have to assume the a priori bound $|\beta|\ls M_1$ on $[0,T]\times \Omega$, i.e., \eqref{eq.2energy81}, in order to get a bound that is linear in the highest-order derivative or energy. Then, we have by \eqref{eq.2energy81}
\begin{align}
  \eqref{eq.r.e14}\ls &CK\norm{\beta}_{L^\infty(\Omega)}E_r(t)\ls C(K,M_1) E_r(t), \text{ for } r=2.
\end{align}

By the H\"older inequality, we have
\begin{align}\label{eq.up}
  \eqref{eq.r.e12}\ls CKE_r^{1/2}(t)\norm{\nb^r \p}_{L^2(\Omega)}.
\end{align}

From \eqref{mhd3.1}, we have
\begin{align*}
  \D_j(D_t v^j)+\Delta\p =\frac{1}{4\pi}\D_j(B^k\D_k B^j),
\end{align*}
which yields from \eqref{eq.DtDi}
\begin{align*}
  \Delta\p =-\D_j v^k\D_k v^j+\frac{1}{4\pi}\D_j B^k\D_k B^j.
\end{align*}
Since $\Delta$ is invariant, we have
\begin{align}\label{eq.2e.p}
  \Delta \p=-\nb_a u^b\nb_b u^a+\frac{1}{4\pi}\nb_a \beta^b \nb_b \beta^a.
\end{align}

It follows that for $r\gs 2$
\begin{align*}
  \nb^{r-2}\Delta \p=&\nb^{r-2}\left(-\nb_a u^b\nb_b u^a+\frac{1}{4\pi}\nb_a \beta^b \nb_b \beta^a\right)\\
  =&-\sum_{s=0}^{r-2}\left(r-2\atop s\right) \nb^s\nb_a u^b\nb^{r-2-s}\nb_b u^a\\
  &+\frac{1}{4\pi}\sum_{s=0}^{r-2}\left(r-2\atop s\right)\nb^s\nb_a \beta^b\nb^{r-2-s}\nb_b \beta^a.
\end{align*}
From \eqref{eq.CllemmaA.4},  we have for $s\gs 0$
\begin{align}\label{eq.r.einfbeta}
  \norm{\nb^s \beta}_{L^\infty(\Omega)}\ls &C\sum_{\ell=0}^{2} K_1^{n/2-\ell}\norm{\nb^{\ell+s}\beta}_{L^2(\Omega)}
  \ls C(K_1)\sum_{\ell=0}^{2} E_{s+\ell}^{1/2}(t),
\end{align}
and, similarly,
\begin{align}\label{eq.r.einfu}
  \norm{\nb^s u}_{L^\infty(\Omega)}\ls C(K_1)\sum_{\ell=0}^{2} E_{s+\ell}^{1/2}(t).
\end{align}
By H\"older's inequality, \eqref{eq.r.einfbeta} and \eqref{eq.r.einfu}, we get for $r\in\{3,4\}$,
\begin{align}\label{eq.deltap34}
  &\norm{\nb^{r-2}\Delta \p}_{L^2(\Omega)}\no\\
  \ls &C\sum_{s=0}^{r-2} \norm{\nb^s\nb_a u^b\nb^{r-2-s}\nb_b u^a}_{L^2(\Omega)}+C\sum_{s=0}^{r-2}\norm{\nb^s\nb_a \beta^b\nb^{r-2-s}\nb_b \beta^a}_{L^2(\Omega)}\no\\
  \ls&C\norm{\nb u}_{L^\infty(\Omega)}\norm{\nb^{r-1} u}_{L^2(\Omega)}+C\norm{\nb \beta}_{L^\infty(\Omega)}\norm{\nb^{r-1} \beta}_{L^2(\Omega)}\no\\
  &+(r-3)C\left(\norm{\nb^{2} u}_{L^\infty(\Omega)}\norm{\nb^{2} u}_{L^2(\Omega)}+\norm{\nb^{2} \beta}_{L^\infty(\Omega)}\norm{\nb^{2} \beta}_{L^2(\Omega)}\right)\no\\
  \ls& C(K_1)E_{r-1}^{1/2}(t)\sum_{\ell=1}^{3} E_{\ell}^{1/2}(t)+(r-3)C(K_1) E_{2}^{1/2}(t)\sum_{\ell=2}^{4} E_{\ell}^{1/2}(t)\no\\
  \ls&C(K_1)\sum_{\ell=1}^{r-1} E_\ell(t)+C(K_1)E_2^{1/2}(t)E_r^{1/2}(t).
\end{align}
For $r=2$, we have a simple estimate from the assumption \eqref{eq.2energy8} and H\"older's inequality, i.e.,
\begin{align}\label{eq.deltap2}
  \norm{\Delta\p}_{L^2(\Omega)}\ls&C\norm{\nb u}_{L^2(\Omega)}\norm{\nb u}_{L^\infty(\Omega)}+C\norm{\nb \beta}_{L^2(\Omega)}\norm{\nb \beta}_{L^\infty(\Omega)}\no\\
  \ls& CME_1^{1/2}(t),
\end{align}
which is a lower energy term.
Thus, by \eqref{eq.CL00prop5.8.2}, \eqref{eq.deltap34} and \eqref{eq.deltap2}, we obtain for any $\delta_r>0$
\begin{align}\label{eq.est.nbrp}
\norm{\nb^r \p}_{L^2(\Omega)}
  \ls &\delta_r\norm{\Pi \nb^r \p}_{L^2(\D\Omega)}\no\\
  &+C(1/\delta_r,K,\vol\Omega)\sum_{s\ls r-2}\norm{\nb^s\Delta \p}_{L^2(\Omega)}\no\\
  \ls&\delta_r\norm{\Pi \nb^r \p}_{L^2(\D\Omega)}+C(1/\delta_r,K,K_1,M,\vol\Omega)\sum_{\ell=1}^{r-1} E_\ell(t)\no\\
  &+(r-2)C(1/\delta_r,K,K_1,M,\vol\Omega)E_2^{1/2}(t)E_r^{1/2}(t).
\end{align}
Now we estimate the boundary terms. Since $\p=\frac{1}{8\pi}\bv^2$ on $\D\Omega$, by \eqref{eq.CL00prop5.9.1}, we have for $r\gs 1$
\begin{align}\label{eq.pinbrp}
    \norm{\Pi\nb^r \p}_{L^2(\D\Omega)}\ls & C(K,K_1)\left(\norm{\theta}_{L^\infty(\D\Omega)}+(r-2)\sum_{k\ls r-3} \norm{\bnb^k \theta}_{L^2(\D\Omega)}\right)\no \\
    &\times\sum_{k\ls r-1}\norm{\nb^k \p}_{L^2(\D\Omega)}.
  \end{align}

From \eqref{eq.CL004.20}, we get $\Pi\nb^2\p=\theta \nb_N\p$ and then, by \eqref{eq.2energy91}, \eqref{eq.2energy9}, \eqref{eq.CL00lemA.7.1}, \eqref{eq.2energy8} and \eqref{eq.est.nbrp}, we get
\begin{align}
  \norm{\theta}_{L^2(\D\Omega)}=&\norm{\frac{\Pi\nb^2\p}{\nb_N\p}}_{L^2(\D\Omega)}\ls \frac{1}{\eps} \norm{\Pi\nb^2\p}_{L^2(\D\Omega)},\\
  \norm{\Pi\nb^2\p}_{L^2(\D\Omega)}\ls& \norm{\theta}_{L^\infty(\D\Omega)}\norm{\nb\p}_{L^2(\D\Omega)}\no\\
  \ls&C(K,\vol\Omega)\left(\norm{\nb^2\p}_{L^2(\Omega)}+\norm{\nb\p}_{L^2(\Omega)}\right)\no\\
  \ls&C(K,\vol\Omega)\delta_2\norm{\Pi\nb^2\p}_{L^2(\D\Omega)} +C(K,\vol\Omega)(\vol\Omega)^{1/2} M\no\\
  &+C(1/\delta_2,K,K_1,M,\vol\Omega)E_1(t),\label{eq.pinb2p}
\end{align}
where the first term of the right hand side of \eqref{eq.pinb2p} can be absorbed by the left hand side if we take $\delta_2$ so small that, e.g., $C(K,\vol\Omega)\delta_2\ls 1/2$. Thus, it follows that
\begin{align}
  \norm{\Pi\nb^2\p}_{L^2(\D\Omega)}\ls&C(K,K_1,M,\vol\Omega)(1+E_1(t)),\\
  \norm{\nb^2\p}_{L^2(\Omega)}\ls&C(K,K_1,M,\vol\Omega)(1+E_1(t)),\label{eq.nb2p}\\
  \norm{\theta}_{L^2(\D\Omega)}\ls&C(K,K_1,M,\vol\Omega,1/\eps)(1+E_1(t)).\label{eq.theta}
\end{align}

By Theorem \ref{thm.1energy}, there exists a $T>0$ such that $E_1(t)$ can be controlled by the initial energy $E_1(0)$ for $t\in[0,T]$, e.g., $E_1(t)\ls 2E_1(0)$.
Thus, from \eqref{eq.pinbrp}, \eqref{eq.theta}, \eqref{eq.2energy8} and \eqref{eq.nb2p} we have
\begin{align}
  \norm{\Pi\nb^3\p}_{L^2(\D\Omega)}\ls &C(K,K_1)\left(K+\norm{\theta}_{L^2(\D\Omega)}\right) \sum_{k\ls 2}\norm{\nb^k\p}_{L^2(\D\Omega)}\no\\
  \ls&C(K,K_1,M,\vol\Omega,1/\eps)(1+E_1(t))\sum_{k\ls 3}\norm{\nb^k\p}_{L^2(\Omega)}\no\\
  \ls& C(K,K_1,M,\vol\Omega,1/\eps,E_1(0))\norm{\nb^3\p}_{L^2(\Omega)} \no\\ &+C(K,K_1,M,\vol\Omega,1/\eps,E_1(0)).
\end{align}

From \eqref{eq.est.nbrp},
\begin{align}
  \norm{\nb^3 \p}_{L^2(\Omega)}
  \ls& \delta_3 C(K,K_1,M,\vol\Omega,1/\eps,E_1(0))\norm{\nb^3\p}_{L^2(\Omega)} \no\\
  &+\delta_3 C(K,K_1,M,\vol\Omega,1/\eps,E_1(0))\no\\
  &+C(1/\delta_3,K,K_1,M,\vol\Omega)\sum_{\ell=1}^2 E_\ell(t)\no\\
  &+C(1/\delta_3,K,K_1,M,\vol\Omega)E_2^{1/2}(t)E_3^{1/2}(t),
\end{align}
which, if we choose $\delta_3>0$ so small that $$\delta_3 C(K,K_1,M,\vol\Omega,1/\eps,E_1(0))\ls 1/2,$$
yields
\begin{align}\label{eq.nb3p}
  \norm{\nb^3 \p}_{L^2(\Omega)}
  \ls&  C(K,K_1,M,\vol\Omega,1/\eps,E_1(0))+C(K,K_1,M,\vol\Omega)\sum_{\ell=1}^2 E_\ell(t)\no\\
  &+C(K,K_1,M,\vol\Omega)E_2^{1/2}(t)E_3^{1/2}(t),
\end{align}
and then
\begin{align}
  \norm{\Pi\nb^3 \p}_{L^2(\D\Omega)}
  \ls&  C(K,K_1,M,\vol\Omega,1/\eps,E_1(0))\no\\
  &\times\left(1+\sum_{\ell=1}^2 E_\ell(t)+E_2^{1/2}(t)E_3^{1/2}(t)\right).
\end{align}

Since
\begin{align*}
  \bnb_b\nb_N \p=&\gamma_b^d\nb_d (N^a\nb_a \p)
  =(\delta_b^d-N_bN^d)((\nb_dN^a)\nb_a \p+N^a\nb_d\nb_a\p)\\
  =&\theta_b^a\nb_a \p+N^a\nb_b\nb_a\p-N_bN^d(\theta_d^a\nb_a\p+N^a\nb_d\nb_a\p),
\end{align*}
from \eqref{eq.CL00lemA.7.1},   it follows that
\begin{align*}
  \norm{\bnb\nb_N\p}_{L^2(\D\Omega)}
  \ls& C\norm{\theta}_{L^\infty(\D\Omega)}\norm{\nb \p}_{L^2(\D\Omega)}+C\norm{\nb^2\p}_{L^2(\D\Omega)}\\
  \ls&C(K,\vol\Omega)\left(\norm{\nb^3 \p}_{L^2(\Omega)}+\norm{\nb^2 \p}_{L^2(\Omega)}+\norm{\nb \p}_{L^2(\Omega)}\right)\\
  \ls& C(K,K_1,M,\vol\Omega,1/\eps,E_1(0))+C(K,K_1,M,\vol\Omega)\sum_{\ell=1}^2 E_\ell(t)\\
  &+C(K,K_1,M,\vol\Omega)E_2^{1/2}(t)E_3^{1/2}(t).
\end{align*}
Thus, by \eqref{eq.CL004.21}, it follows that $(\bnb\theta)\nb_N \p=\Pi\nb^3 \p-3\theta\tilde{\otimes}\bnb\nb_N \p$ and
\begin{align}\label{eq.theta1}
  \norm{\bnb\theta}_{L^2(\D\Omega)}\ls& \frac{1}{\eps}\left(\norm{\Pi\nb^3 \p}_{L^2(\D\Omega)}+C\norm{\theta}_{L^\infty(\D\Omega)}\norm{\bnb\nb_N \p}_{L^2(\D\Omega)}\right)\no\\
  \ls&C(K,K_1,M,\vol\Omega,1/\eps,E_1(0))\no\\
  &\times\left(1+\sum_{\ell=1}^2 E_\ell(t)+E_2^{1/2}(t)E_3^{1/2}(t)\right).
\end{align}

Hence, from \eqref{eq.pinbrp}, \eqref{eq.CL00lemA.7.1}, it yields
\begin{align}
  \norm{\Pi\nb^4\p}_{L^2(\D\Omega)}\ls &C(K,K_1)\left(K+\norm{\theta}_{L^2(\D\Omega)}+ \norm{\bnb \theta}_{L^2(\D\Omega)}\right)\no\\
  &\qquad\qquad\times\sum_{k\ls 4}\norm{\nb^k\p}_{L^2(\Omega)}.\label{eq.nb4p}
\end{align}
Then, from \eqref{eq.est.nbrp}, we can absorb the highest order term $\norm{\nb^4\p}_{L^2(\Omega)}$ by the left hand side for $\delta_4>0$ small enough which is independent of the highest energy $E_4(t)$, and get
\begin{align}
  \norm{\nb^4\p}_{L^2(\Omega)}\ls& C(K,K_1,M,\vol\Omega,1/\eps,E_1(0))\no\\
  &\qquad\times\left(1+\sum_{\ell=1}^3 E_\ell(t)+E_2^{1/2}(t)E_4^{1/2}(t)\right),\\
  \norm{\Pi\nb^4\p}_{L^2(\D\Omega)}\ls& C(K,K_1,M,\vol\Omega,1/\eps,E_1(0))\no\\
  &\qquad\times\left(1+\sum_{\ell=1}^3 E_\ell(t)+E_2^{1/2}(t)E_4^{1/2}(t)\right).
\end{align}

Therefore, from \eqref{eq.nb2p}, \eqref{eq.nb3p} and \eqref{eq.nb4p}, we obtain for $r\gs 2$
\begin{align}
  \norm{\nb^r\p}_{L^2(\Omega)}\ls &C(K,K_1,M,\vol\Omega,1/\eps,E_1(0))\no\\
  &\qquad\times\left(1+\sum_{\ell=1}^{r-1} E_\ell(t)+(r-2)E_2^{1/2}(t)E_r^{1/2}(t)\right),
\end{align}
which, from \eqref{eq.up}, implies
\begin{align}
  \eqref{eq.r.e12}\ls& C(K,K_1,M,\vol\Omega,1/\eps,E_1(0))E_r^{1/2}(t)\no\\
  &\qquad\times\left(1+\sum_{\ell=1}^{r-1} E_\ell(t)+(r-2)E_2^{1/2}(t)E_r^{1/2}(t)\right).
\end{align}

Now, we turn to the estimates of  \eqref{eq.r.e11}. Since $\p=\frac{1}{8\pi}\bv^2$ on $\D\Omega$ implies $\gamma_b^a\nb_a \p=0$ on $\D\Omega$, we get from \eqref{gammaauc}, by noticing that $\vartheta=-1/\nb_N \p$, that
\begin{align}\label{eq.nunb}
  -\vartheta^{-1}N_b=&\nb_N \p N_b=N^a\nb_a \p N_b=\delta_b^a\nb_a\p-\gamma_b^a\nb_a\p
  =\nb_b \p.
\end{align}
 By the H\"older inequality and \eqref{eq.nunb}, we have
\begin{align}
  \eqref{eq.r.e11}\ls &C\norm{\vartheta}_{L^\infty(\D\Omega)}^{1/2} E_r^{1/2}(t)\norm{\Pi\left(D_t\left(\nb^{r} \p\right)-\vartheta^{-1}N_b\nb^{r} u^b\right)}_{L^2(\D\Omega)}\no\\
  = &C\norm{\vartheta}_{L^\infty(\D\Omega)}^{1/2} E_r^{1/2}(t)\norm{\Pi\left(D_t\left(\nb^{r} \p\right)+\nb^{r} u\cdot\nb \p\right)}_{L^2(\D\Omega)}.
\end{align}
By \eqref{eq.commutator}, it follows that
\begin{align}\label{eq.Piterm1}
  D_t\nb^r \p+&\nb^{r} u\cdot\nb \p=[D_t,\nb^r]\p+\nb^r D_t \p+\nb^{r} u\cdot\nb \p\no\\
  =&\sgn(2-r)\sum_{s=1}^{r-2}\left(r\atop s+1\right)(\nb^{s+1}u)\cdot \nb^{r-s} \p+\nb^r D_t \p.
\end{align}

We first consider the estimates of the last term in \eqref{eq.Piterm1}. By \eqref{eq.CL00prop5.9.1} and \eqref{eq.CL00lemA.7.1}, we get, for $2\ls r\ls 4$ 
\begin{align}\label{eq.est.pinbdtp}
  &\norm{\Pi\nb^rD_t\p}_{L^2(\D\Omega)}\no\\
  \ls& C(K,K_1,\vol\Omega)\left(\norm{\theta}_{L^\infty(\D\Omega)}+(r-2)\sum_{k\ls r-3}\norm{\bnb^k\theta}_{L^2(\D\Omega)}\right)\no\\
  &\times\sum_{k\ls r} \norm{\nb^k D_t\p}_{L^2(\Omega)}.
\end{align}

From \eqref{eq.CL00prop5.8.2} , it follows that
\begin{align}\label{eq.est.nbkdtp}
  &\norm{\nb^r D_t\p}_{L^2(\Omega)}\no\\
  \ls& \delta \norm{\Pi\nb^r D_t\p}_{L^2(\D\Omega)}+C(1/\delta, K,\vol\Omega)\sum_{s\ls r-2}\norm{\nb^s\Delta D_t \p}_{L^2(\Omega)}.
\end{align}

By \eqref{eq.CL002.4.3}, \eqref{eq.2e.p}, Lemma \ref{lem.CL00lem2.1}, \eqref{eq.1energy1}, \eqref{eq.1energy2} and \eqref{mhd4},  it yields
\begin{align*}
  \Delta D_t \p=&2h^{ab}\nb_a\nb_b\p+(\Delta u^e)\nb_e \p-D_t(g^{bd}g^{ac}\nb_a u_d\nb_b u_c)\\
  &+\frac{1}{4\pi}D_t(g^{bd}g^{ac}\nb_a \beta_d \nb_b \beta_c)\\
  =&2h^{ab}\nb_a\nb_b\p+(\Delta u^e)\nb_e \p-2D_t(g^{bd})\nb_a u_d\nb_b u^a\\
  &-2 g^{bd}D_t(\nb_a u_d)\nb_b u^a+\frac{1}{2\pi} D_t(g^{bd}) g^{ac}\nb_a \beta_d \nb_b \beta^a \\
  &+ \frac{1}{2\pi}g^{bd} D_t(\nb_a \beta_d)\nb_b\beta^a\\
  =&2h^{ab}\nb_a\nb_b\p+(\Delta u^e)\nb_e \p+4h^{bd}\nb_a u_d\nb_b u^a-\frac{1}{\pi} h^{bd} \nb_a \beta_d \nb_b\beta^a\\
  &+2 g^{bd}\nb_b u^a \nb_a\nb_d\p -2 g^{bd}\nb_b u^a\nb_a u^c\nb_du_c\\
  &-\frac{1}{2\pi} \nb_b u^a(\nb_a\beta^c \nb_c\beta^b+\beta^c\nb_a\nb_c\beta^b)\\
  &+ \frac{1}{2\pi}g^{bd}\nb_b\beta^a\left(\nb_a\beta^e(\nb_e u_d+\nb_d u_e)+\beta^e\nb_e\nb_a u_d\right)\\
  =&4g^{ac}\nb_c u^b\nb_a\nb_b\p+(\Delta u^e)\nb_e \p+2\nb_e u^b\nb_b u^a\nb_a u^e\\
  &-\frac{1}{2\pi} \nb_b u^a\nb_a\beta^c \nb_c\beta^b-\frac{1}{2\pi} \nb_b u^a\beta^c\nb_a\nb_c\beta^b+ \frac{1}{2\pi}\nb_b\beta^a\beta^e\nb_e\nb_a u^b.
\end{align*}

By \eqref{eq.r.einfbeta}, \eqref{eq.est.nbrp} and Lemma \ref{lem.CL00lemA.4}, it follows that for $s\ls 2$
\begin{align}\label{eq.est.nbdtp}
  &\norm{\nb^s\Delta D_t\p}_{L^2(\Omega)}\no\\
  \ls&C\norm{\nb  u}_{L^\infty(\Omega)}\norm{\nb^{s+2} \p}_{L^2(\Omega)}+s(s-1)C\norm{\nb^3 u}_{L^2(\Omega)}\norm{\nb^2 \p}_{L^\infty(\Omega)}\no\\
  &+sC\norm{ \nb^2 u}_{L^4(\Omega)}\norm{\nb^{s+1} \p}_{L^4(\Omega)}+C\norm{\nb^{s+2} u}_{L^2(\Omega)}\norm{\nb  \p}_{L^\infty(\Omega)}\no\\
  &+C\left(\norm{\nb u}_{L^\infty(\Omega)}\norm{\nb u}_{L^\infty(\Omega)}+\norm{\nb \beta}_{L^\infty(\Omega)}\norm{\nb \beta}_{L^\infty(\Omega)}\right) \norm{\nb^{s+1} u}_{L^2(\Omega)}\no\\
  &+s(s-1)C\norm{\nb u}_{L^\infty(\Omega)}\norm{\nb^2 u}_{L^4(\Omega)} \norm{\nb^2 u}_{L^4(\Omega)}\no\\
  &+C\norm{\nb u}_{L^\infty(\Omega)}\norm{\nb \beta}_{L^\infty(\Omega)}\norm{\nb^{s+1} \beta}_{L^2(\Omega)}\no\\
  &+sC\norm{\nb^2 u}_{L^4(\Omega)}\norm{\nb^2 \beta}_{L^4(\Omega)} \left((s-1)\norm{\nb  \beta}_{L^\infty(\Omega)}+\norm{\beta}_{L^\infty(\Omega)}\right)\no\\
  &+s(s-1)C\norm{\nb u}_{L^\infty(\Omega)}\norm{\nb^2 \beta}_{L^4(\Omega)} \norm{\nb^2  \beta}_{L^4(\Omega)}\no\\
  &+C\norm{\nb u}_{L^\infty(\Omega)}\norm{\beta}_{L^\infty(\Omega)} \norm{\nb^{s+2} \beta}_{L^2(\Omega)}\no\\
  &+sC\norm{\nb^3 u}_{L^2(\Omega)}\norm{\beta}_{L^\infty(\Omega)} \left((s-1)\norm{\nb^{2} \beta}_{L^\infty(\Omega)}+\norm{\nb \beta}_{L^\infty(\Omega)}\right)\no\\
  &+s(s-1)C\norm{\nb^3 \beta}_{L^2(\Omega)}\norm{\beta}_{L^\infty(\Omega)} \norm{\nb^{2} u}_{L^\infty(\Omega)}\no\\
  &+s(s-1)C\norm{\nb \beta}_{L^\infty(\Omega)}\norm{\nb^2\beta}_{L^4(\Omega)} \norm{\nb^{2} u}_{L^4(\Omega)}\no\\
  &+s(s-1)C\norm{\nb \beta}_{L^\infty(\Omega)}\norm{\beta}_{L^\infty(\Omega)} \norm{\nb^4 u}_{L^2(\Omega)}\no\\
  &+s(s-1)C\norm{\nb^2 \beta}_{L^\infty(\Omega)}\norm{\beta}_{L^\infty(\Omega)} \norm{\nb^3 u}_{L^2(\Omega)}.
\end{align}
From \eqref{lem.CL00lemA.3}, \eqref{eq.r.einfu}, it follows that
\begin{align}
  \norm{\nb^{s+1} u}_{L^4(\Omega)}\ls& C\norm{\nb^s u}_{L^\infty(\Omega)}^{1/2}\left(\sum_{\ell=0}^2\norm{\nb^{s+\ell} u}_{L^2(\Omega)}K_1^{2-\ell}\right)^{1/2}\no\\
  \ls& C(K_1)\sum_{\ell=0}^2 E_{s+\ell}^{1/2}(t).
\end{align}
We can estimate all the terms with $L^4(\Omega)$ norms in the same way with the help of \eqref{eq.r.einfbeta}, \eqref{eq.r.einfu}, the similar estimate of $\p$ and the assumptions. Thus, we obtain the bound which is linear about the highest-order derivative or the highest-order energy $E_r^{1/2}(t)$, i.e.,
\begin{align}\label{eq.nbdtp}
  \norm{\nb^s\Delta D_t\p}_{L^2(\Omega)}\ls& C(K,K_1,M,M_1,L,1/\eps,\vol\Omega,E_0(0))\no\\
   &\qquad\times\Big(1+\sum_{\ell=0}^{r-1}E_\ell(t)\Big)\big(1+E_r^{1/2}(t)\big).
\end{align}
Thus, from \eqref{eq.est.pinbdtp}, \eqref{eq.est.nbkdtp}, \eqref{eq.nbdtp} and taking some small $\delta$'s which are independent of $E_r(t)$, we obtain, by induction argument for $r$, that
\begin{align}\label{eq.pinbrdtp}
  \norm{\Pi \nb^r D_t\p}_{L^2(\D\Omega)}\ls &C(K,K_1,M,M_1,L,1/\eps,\vol\Omega,E_0(0))\no\\
   &\qquad\times\Big(1+\sum_{\ell=0}^{r-1}E_\ell(t)\Big)\big(1+E_r^{1/2}(t)\big).
\end{align}

To estimate \eqref{eq.Piterm1}, it only remains to estimate
\begin{align}
    \norm{\Pi\left((\nb^{s+1}u)\cdot \nb^{r-s} \p\right)}_{L^2(\D\Omega)} \text{ for } 1\ls s\ls r-2.
\end{align}
For $r=3,4$ and $s=r-2$, we have, by \eqref{eq.2energy92} and Lemma \ref{lem.CL00lemA.7}, that
\begin{align}
  &\norm{\Pi\left((\nb^{r-1}u)\cdot \nb^2 \p\right)}_{L^2(\D\Omega)}\no\\
  \ls& \norm{\nb^{r-1} u}_{L^2(\D\Omega)}\norm{\nb^2 \p}_{L^\infty(\D\Omega)}\no\\
  \ls& CL\norm{\nb^2 u}_{L^{2(n-1)/(n-2)}(\D\Omega)}\no\\
  \ls &C(K,\vol\Omega)L\left(\norm{\nb^r u}_{L^2(\Omega)}+\norm{\nb^{r-1} u}_{L^2(\Omega)}\right)\no\\
  \ls &C(K,L,\vol\Omega)\left(E_{r-1}^{1/2}(t)+E_r^{1/2}(t)\right).
\end{align}
For $n=3$, $r=4$ and $s=1$, by \eqref{eq.CL004.48}, Lemma \ref{lem.CL00lemA.7} and \eqref{eq.est.nbrp}, we get
\begin{align}\label{eq.piup.1}
  &\norm{\Pi\left((\nb^2u)\cdot \nb^3 \p\right)}_{L^2(\D\Omega)}\no\\
  =&\norm{\Pi\nb^2u\cdot \Pi\nb^3 \p+\Pi(\nb^2 u\cdot N)\tilde{\otimes}\Pi(N\cdot \nb^3\p)}_{L^2(\D\Omega)}\no\\
  \ls&C\norm{\Pi\nb^2 u}_{L^4(\D\Omega)}\norm{\Pi\nb^3\p}_{L^4(\D\Omega)}\no\\
  & +C\norm{\Pi(N^a\nb^2 u_a)}_{L^4(\D\Omega)}\norm{\Pi(\nb_N\nb^2 \p)}_{L^4(\D\Omega)}\no\\
  \ls &C\norm{\nb^2 u}_{L^4(\D\Omega)}\norm{\nb^3 \p}_{L^4(\D\Omega)}\no\\
  \ls&C(K,\vol\Omega)\left(\norm{\nb^3 u}_{L^2(\Omega)}+\norm{\nb^2 u}_{L^2(\Omega)}\right) \left(\norm{\nb^4 \p}_{L^2(\Omega)}+\norm{\nb^3 \p}_{L^2(\Omega)}\right)\no\\
  \ls&C(K, K_1,\vol\Omega)(E_3^{1/2}(t)+E_2^{1/2}(t))\left(\sum_{s=0}^3 E_s(t)+\left(\sum_{\ell=0}^{2}E_\ell^{1/2}(t)\right)E_4^{1/2}(t)\right)\no\\
  \ls &C(K, K_1,\vol\Omega)\sum_{s=0}^3 E_s(t)\sum_{\ell=0}^4 E_\ell^{1/2}(t).
\end{align}

Hence, we have
\begin{align}
  \eqref{eq.r.e11}\ls& C(K,K_1,M,M_1,L,1/\eps,\vol\Omega,E_0(0))\no\\
   &\qquad\times\Big(1+\sum_{s=0}^{r-1}E_s(t)\Big)\big(1+E_r(t)\big).
\end{align}
By Lemma \ref{lem.CL00lemA.3}, we can obtain
\begin{align}
  \eqref{eq.r.e111}+ \eqref{eq.r.e112}\ls C(K,K_1,M,\vol\Omega,1/\eps)\left(1+\sum_{s=0}^{r-1} E_s(t)\right)E_r(t).
\end{align}
Therefore, we have shown that
\begin{align}
  \eqref{eq.r.e1}+\eqref{eq.r.e2}+\eqref{eq.r.e7}\ls &C(K,K_1,M,M_1,L,1/\eps,\vol\Omega,E_0(0))\no\\
   &\qquad\times\Big(1+\sum_{s=0}^{r-1}E_s(t)\Big)\big(1+E_r(t)\big).
\end{align}

 We now calculate the material derivatives of $|\nb^{r-1}\curl u|^2$ and $|\nb^{r-1}\curl \beta|^2$. From Lemma \ref{lem.CL00lem2.1}, \eqref{eq.r.u} and \eqref{eq.nb.beta}, we have
\begin{align*}
  &D_t\left(|\nb^{r-1}\curl u|^2+\frac{1}{4\pi}|\nb^{r-1}\curl \beta|^2\right)\\
  =&D_t\left(g^{ac}g^{bd}g^{AF}\nb_A^{r-1}(\curl u)_{ab} \nb_F^{r-1}(\curl u)_{cd}\right)\\
  &+\frac{1}{4\pi}D_t\left(g^{ac}g^{bd}g^{AF}\nb_A^{r-1}(\curl \beta)_{ab} \nb_F^{r-1}(\curl \beta)_{cd}\right)\\
  =&(r+1)D_t(g^{ac})g^{bd}g^{AF}\nb_A^{r-1}(\curl u)_{ab} \nb_F^{r-1}(\curl u)_{cd}\\
   &+4 g^{ac}g^{bd}g^{AF}D_t\left(\nb_A^{r-1}\nb_a u_b\right) \nb_F^{r-1}(\curl u)_{cd}\\
   &+\frac{r+1}{4\pi}D_t(g^{ac})g^{bd}g^{AF}\nb_A^{r-1}(\curl \beta)_{ab} \nb_F^{r-1}(\curl \beta)_{cd}\\
   &+\frac{1}{\pi} g^{ac}g^{bd}g^{AF}D_t\left(\nb_A^{r-1}\nb_a \beta_b\right) \nb_F^{r-1}(\curl \beta)_{cd}\\
  =&-2(r+1)g^{ae}\nb_e u^cg^{bd}g^{AF}\nb_A^{r-1}(\curl u)_{ab} \nb_F^{r-1}(\curl u)_{cd}\\
  &-\frac{r+1}{2\pi}g^{ae}\nb_e u^cg^{bd}g^{AF}\nb_A^{r-1}(\curl \beta)_{ab} \nb_F^{r-1}(\curl \beta)_{cd}\\
  &-4 g^{ac}g^{bd}g^{AF}\nb_F^{r-1}(\curl u)_{cd}\nb_{Aa}^r\nb_b\p \quad\text{ (this vanishes by symmetry)}\\
  &+ 4g^{ac}g^{bd}g^{AF}\nb_F^{r-1}(\curl u)_{cd}(\curl u)_{be}\nb_{Aa}^r u^e\\
  &+4\sgn(2-r)g^{ac}g^{AF}\nb_F^{r-1}(\curl u)_{cd}\sum_{s=1}^{r-2}\left(r\atop s+1\right)\left((\nb^{1+s}u)\cdot\nb^{r-s}u^d\right)_{Aa}\\
  &+\frac{1}{\pi}\sgn(2-r)g^{ac}g^{AF}\nb_F^{r-1}(\curl \beta)_{cd}\sum_{s=1}^{r-2}\left(r\atop s+1\right)\left((\nb^{1+s}u)\cdot\nb^{r-s}\beta^d\right)_{Aa}\\
&+\frac{1}{\pi} g^{ac}g^{bd}g^{AF}\nb_F^{r-1}(\curl \beta)_{cd}\nb_{Aa}^r\beta^e\nb_b u_e- \frac{1}{\pi}g^{ac}g^{AF}\nb_F^{r-1}(\curl \beta)_{cd}\nb_{Aa}^r u^e\nb_e\beta^d\\
  &+\frac{1}{\pi}\nb_e \left(g^{ac}g^{AF}\beta^e\nb_F^{r-1}(\curl u)_{cd}\nb_{Aa}^r \beta^d\right)\\
  &+\frac{1}{\pi}g^{ac}g^{AF}\nb_F^{r-1}(\curl u)_{cd}\sum_{s=1}^r\left(r\atop s\right) \left(\nb^s\beta^e\nb^{r-s}\nb_e\beta^d\right)_{Aa}\\
  &+\frac{1}{\pi}g^{ac}g^{AF}\nb_F^{r-1}(\curl \beta)_{cd}\sum_{s=1}^r\left(r\atop s\right) \left(\nb^s\beta^e\nb^{r-s}\nb_e u^d\right)_{Aa}.
\end{align*}
Noticing that $\beta\cdot N=0$ on $\D\Omega$, then by the H\"older inequality and the Gauss formula, we get
\begin{align}
  \eqref{eq.r.e3}\ls C(K,K_1,M,\vol\Omega,1/\eps)\left(1+\sum_{s=0}^{r-1} E_s(t)\right)E_r(t).
\end{align}

  Thus, by \eqref{eq.CL00lem3.9.1} and \eqref{eq.Dtpcommu}, we get
  \begin{align*}
    D_t(\nb_N \p)=&D_t(N^a\nb_a \p)=(D_t N^a)\nb_a \p+N^aD_t\nb_a \p\\
    =&(-2h_d^a N^d+h_{NN} N^a)\nb_a \p+N^a\nb_a D_t\p\\
    =&-2h_d^a N^d\nb_a \p+h_{NN}\nb_N \p+\nb_N D_t\p,
  \end{align*}
  which yields
  \begin{align}
    \frac{\vartheta_t}{\vartheta}=-\frac{D_t\nb_N \p}{\nb_N\p}=\frac{2h_d^a N^d\nb_a \p}{\nb_N\p}-h_{NN}+\frac{\nb_ND_t\p}{\nb_N\p}.
  \end{align}
Thus, we can easily obtain that the remainder integrals, i.e., \eqref{eq.r.e4}, \eqref{eq.r.e5}, \eqref{eq.r.e6} and \eqref{eq.r.e8}, can be controlled by $C(K,M,L,1/\eps)E_r(t)$.

Therefore, we obtain
\begin{align}
  \frac{d}{dt}E_r(t)\ls &C(K,K_1,M,M_1,L,1/\eps,\vol\Omega,E_0(0))\no\\
   &\qquad\times\Big(1+\sum_{s=0}^{r-1}E_s(t)\Big)\big(1+E_r(t)\big),
\end{align}
which implies the desired result \eqref{eq.renergy} by Gronwall's inequality and the induction argument for $r\in\{2,\cdots, n+1\}$.
\end{proof}

\section{Justification of A Priori Assumptions}

Let  $\K(t)$ and $\eps(t)$ be the maximum and minimum values, respectively, such that \eqref{eq.2energy9} and \eqref{eq.2energy91} hold at time $t$:
\begin{align}
  \K(t)=&\max\left(\norm{\theta(t,\cdot)}_{L^\infty(\D\Omega)}, 1/\iota_0(t)\right),\label{eq.K}\\
  \E(t)=&\norm{1/(\nb_N \p(t,\cdot))}_{L^\infty(\D\Omega)}=1/\eps(t). \label{eq.E}
\end{align}

\begin{lemma}\label{lem.7.6}
  Let $K_1\gs 1/\iota_1$ be as in Definition \ref{defn.3.5}, $\E(t)$ as in \eqref{eq.E}. Then there are continuous functions $G_j$, $j=1,2,3,4$, such that
  \begin{align}
    \norm{\nb u}_{L^\infty(\Omega)}+\norm{\nb \beta}_{L^\infty(\Omega)}+&\norm{ \beta}_{L^\infty(\Omega)}\ls G_1(K_1,E_0,\cdots, E_{n+1}),\label{eq.e.1}\\
    \norm{\nb \p}_{L^\infty(\Omega)}+\norm{\nb^2\p}_{L^\infty(\D\Omega)}\ls &G_2(K_1,\E,E_0,\cdots, E_{n+1},\vol\Omega),\label{eq.e.2}\\
    \norm{\theta}_{L^\infty(\D\Omega)}\ls &G_3(K_1,\E,E_0,\cdots, E_{n+1},\vol\Omega),\label{eq.e.3}\\
    \norm{\nb D_t\p}_{L^\infty(\D\Omega)}\ls &G_4(K_1,\E,E_0,\cdots, E_{n+1},\vol\Omega).\label{eq.e.4}
  \end{align}
\end{lemma}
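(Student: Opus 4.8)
The strategy is to reduce each $L^\infty$ bound on the left to an $L^2$ bound on a bounded number of derivatives, and then to reduce those $L^2$ bounds to the energies $E_0,\dots,E_{n+1}$ via the elliptic estimates of Section~5 and \cite{CL00}. The scheme closes precisely because $n\in\{2,3\}$: by Lemma~\ref{lem.CL00lemA.4} an interior $L^\infty$ bound on a tensor costs at most its first two covariant derivatives in $L^2(\Omega)$, while $\D\Omega$ has dimension $n-1\ls 2$, so $H^{n-1}(\D\Omega)\hookrightarrow L^\infty(\D\Omega)$ and a tangential $L^\infty$ bound on $\D\Omega$ costs at most $n-1$ tangential derivatives in $L^2(\D\Omega)$; both demands stay within the range of derivatives controlled by $E_{n+1}$.

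For \eqref{eq.e.1}, apply Lemma~\ref{lem.CL00lemA.4} to $\beta$, $\nb u$ and $\nb\beta$, reducing the claim to bounding $\norm{\nb^m u}_{L^2(\Omega)}+\norm{\nb^m\beta}_{L^2(\Omega)}$ for $0\ls m\ls n+1$ (for $\beta$ and $m\ls 2$ this is \eqref{eq.CllemmaA.4}). Since $\nb_a u^a=\nb_a\beta^a=0$, the div--curl elliptic estimates in the form used in \cite{CL00}, applied to $u$ (whose tangential part $\Pi\nb^m u$ on $\D\Omega$ enters $E_m$) and to $\beta$ (which carries the boundary condition $\beta\cdot N=0$ of \eqref{mhd4.4}), give
\begin{align*}
\norm{\nb^m u}_{L^2(\Omega)}+\norm{\nb^m\beta}_{L^2(\Omega)}\ls C(K_1)\Big(\sum_{j=0}^{m}E_j\Big)^{1/2},\qquad 0\ls m\ls n+1,
\end{align*}
and \eqref{eq.e.1} follows.

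The bounds \eqref{eq.e.2}--\eqref{eq.e.4} rest on a finite induction on the derivative order $m=2,\dots,n+1$, run simultaneously for $\norm{\nb^m\p}_{L^2(\Omega)}$, $\norm{\Pi\nb^m\p}_{L^2(\D\Omega)}$ and $\norm{\bnb^{m-2}\theta}_{L^2(\D\Omega)}$. At step $m$ one combines: the Dirichlet elliptic estimate \eqref{eq.CL00prop5.8.2}, in which $\nb^{s}\Delta\p$ for $s\ls m-2$ is rewritten by \eqref{eq.2e.p} as a sum of products of derivatives of $u,\beta$ controlled by \eqref{eq.e.1} and the $L^2$ bounds from the previous part; the boundary estimate \eqref{eq.CL00prop5.9.1}, which bounds $\norm{\Pi\nb^m\p}_{L^2(\D\Omega)}$ by a factor $\big(\norm{\theta}_{L^\infty(\D\Omega)}+\sum_{k\ls m-3}\norm{\bnb^k\theta}_{L^2(\D\Omega)}\big)$ times $\sum_{k\ls m-1}\norm{\nb^k\p}_{L^2(\D\Omega)}$, so that the trace inequality \eqref{eq.CL00lemA.7.1} (with constant depending on $K_1$ and $\vol\Omega$) keeps everything within $\norm{\nb^{\ls m}\p}_{L^2(\Omega)}$; and the geometric identities \eqref{eq.CL004.20}, \eqref{eq.CL004.21} and their higher-order analogues, which express $(\bnb^{m-2}\theta)\nb_N\p$ as $\Pi\nb^m\p$ plus lower-order tangential-derivative terms in $\theta$ and $\nb_N\p$, so that $-\nb_N\p\gs1/\E$ yields the $\theta$-bound at order $m-2$. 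At the base case $m=2$ the estimate is self-referential, $\norm{\Pi\nb^2\p}_{L^2(\D\Omega)}$ occurring on both sides, and is closed exactly as in \eqref{eq.pinb2p}, by choosing the parameter $\delta$ in \eqref{eq.CL00prop5.8.2} small enough to absorb it; the only new point is that, replacing each use of the a~priori hypotheses \eqref{eq.2energy8}--\eqref{eq.2energy92} by \eqref{eq.e.1} and \eqref{eq.CllemmaA.4}, all constants turn out to depend on $K_1$, $\E$, $\vol\Omega$ and $E_0,\dots,E_{n+1}$ alone. Feeding the outcome into Lemma~\ref{lem.CL00lemA.4} (for $\nb\p$ in the interior, and via \eqref{eq.CL00prop5.9.1} for $\nb^2\p$ on $\D\Omega$) gives \eqref{eq.e.2}; the embedding $H^{n-1}(\D\Omega)\hookrightarrow L^\infty(\D\Omega)$ applied to $\theta$ gives \eqref{eq.e.3}; and \eqref{eq.e.4} follows by repeating the argument for $D_t\p$ in place of $\p$ --- which is also constant on $\D\Omega$ by \eqref{mhd4.4} --- using the identity for $\Delta D_t\p$ derived in the proof of Theorem~\ref{thm.renergy}, whose ingredients ($h$, $\Delta u$, $\nb\p$, $\nb u$, $\nb\beta$, $\nb^2\p$, $\beta$) are by then all controlled.

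The only genuine difficulty is the circular appearance of $\norm{\theta}_{L^\infty(\D\Omega)}$ and of the interior and trace elliptic constants, which at face value depend on the very quantities being estimated. This is defused by the induction on derivative order above, with the absorption at $m=2$ serving as the base case, together with the dimensional fact that $H^{n-1}(\D\Omega)\hookrightarrow L^\infty(\D\Omega)$ for $n\ls3$, so that the $L^2$-based control of $\theta$ through order $n-1$ --- available from $E_{n+1}$ --- already supplies its $L^\infty$ norm and no separate $L^\infty$-in-$\theta$ bootstrap is needed. The remaining work is bookkeeping: going through the elliptic estimates of Section~5 once more and checking that, after substituting \eqref{eq.e.1} and \eqref{eq.CllemmaA.4} for the a~priori hypotheses \eqref{eq.2energy8}--\eqref{eq.2energy92}, every constant that appears is a continuous function of $K_1$, $\E$, $\vol\Omega$ and $E_0,\dots,E_{n+1}$.
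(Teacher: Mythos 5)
Your overall skeleton (interior Sobolev Lemma \ref{lem.CL00lemA.4} plus the div--curl estimate for \eqref{eq.e.1}, and a re-run of the Section~5 elliptic estimates, followed by Sobolev embedding, for \eqref{eq.e.2} and \eqref{eq.e.4}) is the same as the paper's. The genuine divergence, and the gap, is in how you obtain \eqref{eq.e.3} and, with it, the claim that all constants depend only on $K_1,\E,\vol\Omega$ and the energies. You propose to bound $\norm{\theta}_{L^\infty(\D\Omega)}$ by controlling $\bnb^k\theta$ in $L^2(\D\Omega)$ for $k\ls n-1$ and invoking $H^{n-1}(\D\Omega)\hookrightarrow L^\infty(\D\Omega)$. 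But every estimate available for those quantities carries the unknown itself: \eqref{eq.CL00prop5.9.2} and its concrete instance \eqref{eq.theta1} have $\norm{\theta}_{L^\infty(\D\Omega)}$ (and a constant depending on $K$, which by \eqref{eq.2energy9} majorizes $\norm{\theta}_{L^\infty(\D\Omega)}+1/\iota_0$) multiplying the top-order norms of $\p$, and the projection estimate \eqref{eq.CL00prop5.9.1} used at every stage of your induction does as well --- see also the base case \eqref{eq.pinb2p}, whose first step is $\norm{\Pi\nb^2\p}_{L^2(\D\Omega)}\ls\norm{\theta}_{L^\infty(\D\Omega)}\norm{\nb\p}_{L^2(\D\Omega)}$. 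These occurrences are not of the form ``small $\delta$ times the left-hand side'': they multiply the $\p$-norms, so the $\delta$-absorption that closes \eqref{eq.pinb2p} does nothing for them, and your induction is on the \emph{order of derivatives}, which is orthogonal to this circularity in the $L^\infty$ norm of $\theta$. The sentence ``all constants turn out to depend on $K_1,\E,\ldots$ alone'' is precisely the assertion that needs an argument, and your mechanism does not supply one.

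The paper breaks this loop with a device you do not use: since $\p$ is constant on $\D\Omega$, \eqref{eq.CL004.20} gives $\Pi\nb^2\p=\theta\,\nb_N\p$ exactly, whence the pointwise inequality \eqref{eq.e.7}, $|\theta|\ls\E\,|\nb^2\p|$ on $\D\Omega$. Thus \eqref{eq.e.3} is an immediate corollary of \eqref{eq.e.2}, with no elliptic machinery applied to $\theta$ at all; and the same identity is what allows every residual $\norm{\theta}_{L^\infty(\D\Omega)}$ in the boundary estimates to be traded for $\E\,\norm{\nb^2\p}_{L^\infty(\D\Omega)}$, so that the $\theta$-dependence is converted into dependence on the quantity being estimated in \eqref{eq.e.2} and can then be closed (via the $\delta$-form of the boundary Sobolev inequality \eqref{eq.A.32}) in terms of $K_1$, $\E$ and the energies. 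To repair your proof, replace the $H^{n-1}(\D\Omega)\hookrightarrow L^\infty(\D\Omega)$ route for $\theta$ by this pointwise identity, and use it systematically wherever $\norm{\theta}_{L^\infty(\D\Omega)}$ or $K$ enters the constants of \eqref{eq.CL00prop5.9.1}, \eqref{eq.CL00prop5.9.2} and the trace inequality \eqref{eq.CL00lemA.7.1}; the rest of your outline (including treating $D_t\p$, which indeed vanishes on $\D\Omega$ in Lagrangian coordinates, by the same scheme via the formula for $\Delta D_t\p$) then matches the paper's argument.
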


\begin{proof}
  \eqref{eq.e.1} follows from \eqref{eq.r.einfu}, \eqref{eq.r.einfbeta} and \eqref{eq.CllemmaA.4}. From Lemmas \ref{lem.CL00lemA.4} and \ref{lem.CL00lemA.2},  we have
\begin{align}
  \norm{\nb \p}_{L^\infty(\Omega)}\ls &C(K_1)\sum_{\ell=0}^{2} \norm{\nb^{\ell+1}\p}_{L^2(\Omega)},\label{eq.e.5}\\
  \norm{\nb^2\p}_{L^\infty(\D\Omega)}\ls &C(K_1)\sum_{\ell=0}^{n+1} \norm{\nb^{\ell}\p}_{L^2(\D\Omega)}.\label{eq.e.6}
\end{align}
Thus, \eqref{eq.e.2} follows from \eqref{eq.e.5}, \eqref{eq.e.6}, Lemmas \ref{lem.CL00lemA.5}--\ref{lem.CL00lemA.7}, \eqref{eq.deltap2}, \eqref{eq.nb2p} and \eqref{eq.nb3p}. Since, from \eqref{eq.CL004.20},
\begin{align}\label{eq.e.7}
  |\nb^2\p|\gs |\Pi\nb^2\p|=|\nb_N\p||\theta|\gs \E^{-1}|\theta|,
\end{align}
so \eqref{eq.e.3} follows from \eqref{eq.e.2}. \eqref{eq.e.4} follows from Lemma \ref{lem.CL00lemA.2}, \eqref{eq.est.nbkdtp}, \eqref{eq.nbdtp} and \eqref{eq.pinbrdtp}.
\end{proof}

\begin{lemma}\label{lem.7.7}
  Let $K_1\gs 1/\iota_1$ and $\eps_1$ be as in Definition \ref{defn.3.5}. Then
  \begin{align}\label{eq.e.8}
    \abs{\frac{d}{dt}E_r}\ls C_r(K_1,\E,E_0,\cdots, E_{n+1},\vol\Omega)\sum_{s=0}^r E_s,
  \end{align}
  and
  \begin{align}\label{eq.e.9}
    \abs{\frac{d}{dt}\E}\ls C_r(K_1,\E,E_0,\cdots, E_{n+1},\vol\Omega).
  \end{align}
\end{lemma}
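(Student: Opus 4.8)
The plan is to read off, from the proofs of Theorems~\ref{thm.1energy} and~\ref{thm.renergy} in Sections~4--5, the differential inequalities established there for $E_r$, and then to eliminate the a priori constants in them by Lemma~\ref{lem.7.6}. Indeed, $\frac{d}{dt}E_0=0$ by the conservation law of Section~3; the proof of Theorem~\ref{thm.1energy} yields
$$\frac{d}{dt}E_1(t)\ls CKM\big((\vol\Omega)^{1/2}+E_0^{1/2}(0)\big)E_1^{1/2}(t)+CME_1(t);$$
and the proof of Theorem~\ref{thm.renergy} yields, for $2\ls r\ls n+1$,
$$\frac{d}{dt}E_r(t)\ls \widetilde C\Big(1+\sum_{s=0}^{r-1}E_s(t)\Big)\big(1+E_r(t)\big),$$
where $\widetilde C$ depends only on the a priori constants $K,K_1,M,M_1,L,1/\eps$, on $\vol\Omega$ and on $E_0(0)$. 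The key observation is that the a priori constants appearing here are exactly those controlled in Lemma~\ref{lem.7.6}: one has $K=\norm{\theta}_{L^\infty(\D\Omega)}+1/\iota_0$ with $\norm{\theta}_{L^\infty(\D\Omega)}\ls G_3$ and $1/\iota_0\ls 1/\iota_1\ls K_1$; $M\ls G_1+G_2$; $M_1=\norm{\beta}_{L^\infty(\Omega)}\ls G_1$; $L=\norm{\nb^2\p}_{L^\infty(\D\Omega)}+\norm{\nb_N D_t\p}_{L^\infty(\D\Omega)}\ls G_2+G_4$; and $1/\eps=\E$. Since every $G_j$ is a continuous function of $K_1,\E,E_0,\cdots,E_{n+1},\vol\Omega$, and $E_0(0)=E_0(t)$, after substitution the right-hand sides above become continuous functions of $K_1,\E,E_0,\cdots,E_{n+1},\vol\Omega$ times a polynomial in $E_0(t),\cdots,E_r(t)$.

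To recast that polynomial as $\sum_{s=0}^r E_s$ I would argue as follows. If $E_0\equiv0$ then $u\equiv\beta\equiv0$, hence $E_s\equiv0$ for all $s$ and \eqref{eq.e.8} holds trivially; otherwise $E_0(t)=E_0(0)>0$, so $1\ls E_0(t)^{-1}\sum_{s=0}^r E_s(t)$ and $E_r(t)\ls\sum_{s=0}^r E_s(t)$. Absorbing the harmless factor $1+\sum_{s=0}^{r-1}E_s(t)\ls C(E_0,\cdots,E_{n+1})$ into the constant, and using $E_1^{1/2}\ls\tfrac12(1+E_1)$ in the $r=1$ case, one gets $|\frac{d}{dt}E_r|\ls C_r(K_1,\E,E_0,\cdots,E_{n+1},\vol\Omega)\sum_{s=0}^r E_s$, which is \eqref{eq.e.8}.

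For \eqref{eq.e.9}, write $\E(t)=\norm{\vartheta(t,\cdot)}_{L^\infty(\D\Omega)}$ with $\vartheta=(-\nb_N\p)^{-1}>0$ on $\D\Omega$ (so $|1/\nb_N\p|=\vartheta\ls\E$). In Lagrangian coordinates $\D\Omega$ is fixed, so the upper Dini derivative of this supremum is at most $\norm{D_t\vartheta(t,\cdot)}_{L^\infty(\D\Omega)}$. By the identity obtained in the proof of Theorem~\ref{thm.renergy},
$$D_t\vartheta=\vartheta\Big(\frac{2h_d^aN^d\nb_a\p}{\nb_N\p}-h_{NN}+\frac{\nb_ND_t\p}{\nb_N\p}\Big),$$
and, using $|h|\ls|\nb u|$ from Lemma~\ref{lem.CL00lem2.1} together with $\norm{\nb u}_{L^\infty(\Omega)}\ls G_1$, $\norm{\nb\p}_{L^\infty(\Omega)}\ls G_2$, $\norm{\nb D_t\p}_{L^\infty(\D\Omega)}\ls G_4$ and $\vartheta\ls\E$, one finds $|D_t\vartheta|\ls\E\big(2CG_1G_2\,\E+CG_1+G_4\,\E\big)$, a continuous function of $K_1,\E,E_0,\cdots,E_{n+1},\vol\Omega$; this gives \eqref{eq.e.9}.

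The genuine work is not any single estimate but the careful verification that \emph{every} constant emerging from the long computation of Section~5 already belongs to the list $K,K_1,M,M_1,L,1/\eps,\vol\Omega,E_0(0)$, so that Lemma~\ref{lem.7.6} applies without gaps; the only technical subtlety is that $\E$ is an $L^\infty$ norm, handled by the standard fact that $\frac{d}{dt}\norm{f(t,\cdot)}_{L^\infty}\ls\norm{D_tf(t,\cdot)}_{L^\infty}$ in the sense of upper Dini derivatives (equivalently, by selecting at each time a near-maximizing point and differentiating along the flow).
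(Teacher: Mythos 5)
Your argument is essentially the paper's own: the proof of \eqref{eq.e.8} there is precisely ``Lemma \ref{lem.7.6} plus the estimates in the proofs of Theorems \ref{thm.1energy} and \ref{thm.renergy}'', and \eqref{eq.e.9} is obtained by differentiating $\norm{1/(-\nb_N\p(t,\cdot))}_{L^\infty(\D\Omega)}$ and invoking \eqref{eq.e.4}, exactly as you do; your version of the latter, which keeps all three terms of $D_t\nb_N\p=-2h_d^aN^d\nb_a\p+h_{NN}\nb_N\p+\nb_ND_t\p$ and bounds them by $G_1,G_2,G_4,\E$, is if anything the more complete form of the paper's displayed inequality. Your dictionary $M,M_1,L,1/\eps\leftrightarrow G_1,\dots,G_4,\E$ is correct, and your device for absorbing the additive constants in $(1+\sum_{s<r}E_s)(1+E_r)$ via the conserved $E_0$ (treating $E_0\equiv0$ separately) does yield the stated form $C_r\sum_{s=0}^rE_s$; note only that it makes $C_r$ depend on $1/E_0$, hence not continuous at $E_0=0$ --- the same looseness is already implicit in the paper, whose differential inequality carries the extra ``$+1$'' terms, and it is harmless for the use made of the lemma in Lemma \ref{lem.7.8}.

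The one step that is wrong as written is the claim $1/\iota_0\ls 1/\iota_1\ls K_1$. The normal injectivity radius $\iota_0$ of Definition \ref{defn.3.4} is \emph{not} bounded below by $\iota_1$ of Definition \ref{defn.3.5} in general: for a boundary with small-amplitude, high-frequency oscillations, $\iota_0$ is of the order of the focal distance $\sim 1/\norm{\theta}_{L^\infty(\D\Omega)}$, which can be far smaller than $\iota_1$, and near self-intersections of $\D\Omega$ can also make $\iota_0$ strictly smaller than $\iota_1$. Since the constants in Theorems \ref{thm.1energy} and \ref{thm.renergy} involve $K$ with $|\theta|+1/\iota_0\ls K$, you must control $1/\iota_0$ differently: the correct route (implicit in the paper, cf.\ the lemma in \cite{CL00}, Lemma 3.6 there) is that $1/\iota_1\ls K_1$ together with a bound on the second fundamental form gives $\iota_0\gs\min\big(\iota_1/2,\,c/\norm{\theta}_{L^\infty(\D\Omega)}\big)$, and the needed bound $\norm{\theta}_{L^\infty(\D\Omega)}\ls G_3$ is supplied by \eqref{eq.e.3}. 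With this substitution your proof closes and coincides with the paper's.
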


\begin{proof}
  \eqref{eq.e.8} is a consequence of Lemma \ref{lem.7.6} and the estimates in the proof of Theorems \ref{thm.1energy} and \ref{thm.renergy}. \eqref{eq.e.9} follows from
  \begin{align*}
    \abs{\frac{d}{dt}\norm{\frac{1}{-\nb_N\p(t,\cdot)}}_{L^\infty(\D\Omega)}}\ls C\norm{\frac{1}{-\nb_N\p(t,\cdot)}}_{L^\infty(\D\Omega)}^2 \norm{\nb_ND_t\p(t,\cdot)}_{L^\infty(\D\Omega)}
  \end{align*}
  and \eqref{eq.e.4}.
\end{proof}

As a result of Lemma \ref{lem.7.7}, we have the following:

\begin{lemma}\label{lem.7.8}
  There exists a continuous function $\T>0$ depending on $K_1$, $\E(0)$, $E_0(0)$, $\cdots$, $E_{n+1}(0)$, $\vol\Omega$ such that for
  \begin{align}
    0\ls t\ls \T(K_1,\E(0),E_0(0),\cdots, E_{n+1}(0),\vol\Omega),
  \end{align}
  the following statements hold: We have
  \begin{align}\label{eq.7.37}
    E_s(t)\ls 2E_s(0), \quad 0\ls s\ls n+1, \quad \E(t)\ls 2\E(0).
  \end{align}
  Furthermore,
  \begin{align}\label{eq.7.38}
    \frac{g_{ab}(0,y)Y^aY^b}{2}\ls g_{ab}(t,y)Y^aY^b\ls 2g_{ab}(0,y)Y^aY^b,
  \end{align}
  and with $\eps_1$ as in Definition \ref{defn.3.5},
  \begin{align}
    \qquad\abs{\N(x(t,\bar{y}))-\N(x(0,\bar{y}))}\ls&\frac{\eps_1}{16}, &&\bar{y}\in\D\Omega,\qquad\label{eq.7.39}\\
    \abs{x(t,y)-x(t,y)}\ls&\frac{\iota_1}{16}, &&y\in\Omega,\label{eq.7.40}\\
    \abs{\frac{\D x(t,\bar{y})}{\D y}-\frac{\D (0,\bar{y})}{\D y}}\ls &\frac{\eps_1}{16}, &&\bar{y}\in\D\Omega.\label{eq.7.41}
  \end{align}
\end{lemma}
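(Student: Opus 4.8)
The plan is to run a bootstrap (continuity-in-time) argument built on the differential inequalities of Lemma \ref{lem.7.7} and the a priori bounds of Lemma \ref{lem.7.6}. Fix a small number $\T_0>0$ to be chosen at the end, and let $T^*\in(0,\T_0]$ be the supremum of times such that on $[0,T^*]$ all of the following hold: $E_s(t)\ls 2E_s(0)$ for $0\ls s\ls n+1$, $\E(t)\ls 2\E(0)$, and the geometric inequalities \eqref{eq.7.38}--\eqref{eq.7.41}. Since the solution is smooth and every quantity involved depends continuously on $t$, and since all these inequalities hold at $t=0$ (those in \eqref{eq.7.38}--\eqref{eq.7.41} trivially), we have $T^*>0$. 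The goal is to show that, with $\T_0$ chosen appropriately as a function of $K_1,\E(0),E_0(0),\dots,E_{n+1}(0),\vol\Omega$, no bound in the definition of $T^*$ can be saturated before $\T_0$, so that $T^*=\T_0$ and $\T=\T_0$ works.

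First I would control $\E$ and the energies. On $[0,T^*]$ the hypotheses of Lemmas \ref{lem.7.6}--\ref{lem.7.7} are in force, so all constants $C_r$ in \eqref{eq.e.8}--\eqref{eq.e.9} are bounded by a fixed constant $C^*=C^*(K_1,\E(0),E_0(0),\dots,E_{n+1}(0),\vol\Omega)$, obtained by replacing each argument $E_s,\E$ of $C_r$ by its doubled initial value. From $\abs{d\E/dt}\ls C^*$ I get $\E(t)\ls\E(0)+C^*t<2\E(0)$ for $t<\E(0)/C^*$. For the energies I would induct on $r$: $E_0(t)=E_0(0)$ is the base case, and assuming $E_s(t)\ls 2E_s(0)$ for $s<r$ on $[0,T^*]$, \eqref{eq.e.8} gives $dE_r/dt\ls C^*\big(E_r+\sum_{s<r}E_s\big)\ls C^*E_r+2C^*\sum_{s<r}E_s(0)$, whence Gronwall yields $E_r(t)\ls e^{C^*t}E_r(0)+2\big(\sum_{s<r}E_s(0)\big)(e^{C^*t}-1)$, which is $<2E_r(0)$ for $t$ less than a positive number depending only on the listed data. (Alternatively one invokes Theorems \ref{thm.1energy} and \ref{thm.renergy}, whose constants are controlled on $[0,T^*]$ in the same way.)

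Next I would handle the geometric bounds \eqref{eq.7.38}--\eqref{eq.7.41} by integrating evolution equations whose right-hand sides are controlled through Lemma \ref{lem.7.6}. For \eqref{eq.7.38}, Lemma \ref{lem.CL00lem2.1} gives $D_tg_{ab}=\nb_au_b+\nb_bu_a=2h_{ab}$, hence for fixed $y$ and $Y$, $\frac{d}{dt}\big(g_{ab}Y^aY^b\big)=2h_{ab}Y^aY^b$ with $\abs{h_{ab}Y^aY^b}\ls C\norm{\nb u}_{L^\infty(\Omega)}\,g_{ab}Y^aY^b$, and $\norm{\nb u}_{L^\infty(\Omega)}$ is controlled by \eqref{eq.e.1}; Gronwall in both directions gives \eqref{eq.7.38} for $t$ small. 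For \eqref{eq.7.40} I would integrate $dx/dt=v$, using that $\norm{v}_{L^\infty(\Omega)}$ is controlled by the Sobolev estimates and the energies; for \eqref{eq.7.41} I would integrate $D_t(\D x^i/\D y^a)=(\D x^k/\D y^a)(\D v^i/\D x^k)$ from Lemma \ref{lem.CL00lem2.1}, bounding the right side by $\norm{\nb u}_{L^\infty(\Omega)}$ times $\norm{\D x/\D y}_{L^\infty(\Omega)}$ (the latter controlled via \eqref{eq.7.38}); and for \eqref{eq.7.39} I would integrate the formula for $D_t\N$ from Lemma \ref{lem.CL00lem3.9}, again bounded in terms of $h$ and hence $\norm{\nb u}_{L^\infty(\D\Omega)}$. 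In each case the difference at time $t$ is $\ls Ct$ with $C$ depending only on the listed data, so it stays below $\iota_1/16$, resp.\ $\eps_1/16$, for $t$ small.

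Finally I would set $\T_0$ equal to the minimum of all the explicit small times produced above; then on $[0,T^*]$ every defining inequality of $T^*$ holds with strict inequality, so $T^*$ cannot be an interior first-failure point, forcing $T^*=\T_0$, and $\T=\T_0$ is the desired continuous function. The main obstacle is purely the bookkeeping of the circular dependence: one must check that, once the bootstrap assumptions \eqref{eq.7.37} and \eqref{eq.7.38}--\eqref{eq.7.41} are in force, every constant in Lemmas \ref{lem.7.6}--\ref{lem.7.7} (and in the earlier energy estimates, which rely on the a priori bounds $M$, $M_1$, $K$, $L$) is genuinely controlled by $K_1,\E(0),E_0(0),\dots,E_{n+1}(0),\vol\Omega$ alone---in particular that \eqref{eq.7.39}--\eqref{eq.7.41}, together with the safety margins $\iota_1,\eps_1$ of Definition \ref{defn.3.5}, keep the injectivity radius $\iota_0(t)$ and the second fundamental form $\theta$ bounded (so that $K$ stays controlled), which is what makes the differential inequalities close.
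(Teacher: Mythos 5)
Your proposal is correct and follows essentially the same route as the paper: Lemma \ref{lem.7.7} plus Gronwall gives \eqref{eq.7.37} for small time, and then the $L^\infty$ bounds of Lemma \ref{lem.7.6} are used to integrate the evolution equations $D_tg_{ab}=2h_{ab}$, $D_tx=v$, $D_t(\D x/\D y)=(\D v/\D x)(\D x/\D y)$ and $D_tN_a=h_{NN}N_a$ to obtain \eqref{eq.7.38}--\eqref{eq.7.41}. The only difference is that you make the bootstrap/continuity argument (and the check that $K$, $M$, $L$ stay controlled so the inequalities close) explicit, which the paper leaves implicit; your minor variant of bounding $\norm{v}_{L^\infty}$ directly by Sobolev embedding instead of integrating $D_tv$ is equally valid.
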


\begin{proof}
  We get \eqref{eq.7.37} from Lemma \ref{lem.7.7} if $\T(K_1,\E(0),E_0(0),\cdots, E_{n+1}(0)$, $\vol\Omega)>0$ is sufficiently small. Then from \eqref{eq.7.37} and Lemma \ref{lem.7.6}, we have
  \begin{align}
    \norm{\nb u}_{L^\infty(\Omega)}+&\norm{\nb \beta}_{L^\infty(\Omega)} +\norm{ \beta}_{L^\infty(\Omega)}+\norm{\nb \p}_{L^\infty(\Omega)}\no\\
    &\ls C(K_1,\E(0),E_0(0),\cdots, E_{n+1}(0)),\label{eq.7.42}\\
    \norm{\nb^2\p}_{L^\infty(\D\Omega)}+&\norm{\theta}_{L^\infty(\D\Omega)}\no\\
    &\ls C(K_1,\E(0),E_0(0),\cdots, E_{n+1}(0),\vol\Omega),\label{eq.7.43}\\
\norm{\nb D_t\p}_{L^\infty(\D\Omega)}&\ls C(K_1,\E(0),E_0(0),\cdots, E_{n+1}(0),\vol\Omega).\label{eq.7.44}
  \end{align}
  By \eqref{eq.1energy1} and \eqref{eq.1energy2}, we have
  \begin{align}
    \abs{D_t\nb u}\ls& \abs{\nb^2 \p}+\abs{\nb u}^2+\abs{\nb \beta}^2+\abs{\beta}\abs{\nb^2\beta},\\
    \abs{D_t \nb\beta}\ls& \abs{\nb\beta}\abs{\nb u}+\abs{\beta}\abs{\nb^2 u}.\label{eq.e.10}
  \end{align}
  By \eqref{eq.A.32}, \eqref{eq.CL00lemA.7.1}, Lemma \ref{lem.7.6} and \eqref{eq.7.37}, we have
  \begin{align*}
    \norm{\nb u}_{L^\infty(\D\Omega)}+\norm{\nb \beta}_{L^\infty(\D\Omega)}\ls C(K_1,\E(0),E_0(0),\cdots, E_{n+1}(0),\vol\Omega).
  \end{align*}
  Thus, by noticing that $|\beta|=\bv$ on $\D\Omega$, it follows, from \eqref{eq.7.42}, \eqref{eq.7.43}, Lemmas \ref{lem.CL00lemA.2} and \ref{lem.CL00lemA.7}, \eqref{eq.r.einfu} and \eqref{eq.r.einfbeta}, that
  \begin{align*}
    &\norm{D_t\nb u}_{L^\infty(\D\Omega)}+\norm{D_t\nb\beta}_{L^\infty(\D\Omega)}\\
    \ls& \norm{\nb^2\p}_{L^\infty(\D\Omega)}+\left(\norm{\nb u}_{L^\infty(\D\Omega)}+\norm{\nb \beta}_{L^\infty(\D\Omega)}\right)^2\no\\
    &+\bv\left(\norm{\nb^2 u}_{L^\infty(\D\Omega)}+\norm{\nb^2 \beta}_{L^\infty(\D\Omega)}\right)\no\\
    \ls& C(K_1,\E(0),E_0(0),\cdots, E_{n+1}(0),\vol\Omega)\\
    &\times\left(1+\norm{\nb u}_{L^\infty(\D\Omega)}+\norm{\nb \beta}_{L^\infty(\D\Omega)}\right),
  \end{align*}
  which yields, with the help of Gronwall's inequality, for $0\ls t\ls T$
  \begin{align}
    &\norm{\nb u(t,\cdot)}_{L^\infty(\D\Omega)}+\norm{\nb \beta(t,\cdot)}_{L^\infty(\D\Omega)}\no\\
\ls &e^{C(K_1,\E(0),E_0(0),\cdots, E_{n+1}(0),\vol\Omega)t}\left(\norm{\nb u(0,\cdot)}_{L^\infty(\D\Omega)}+\norm{\nb \beta(0,\cdot)}_{L^\infty(\D\Omega)}\right)\no\\
    &+e^{C(K_1,\E(0),E_0(0),\cdots, E_{n+1}(0),\vol\Omega)t}-1.
  \end{align}
  If $T$ is sufficiently small, it follows, after possibly making $\T>0$ smaller, that
  \begin{align}\label{eq.7.45}
    &\norm{\nb u(T,\cdot)}_{L^\infty(\D\Omega)}+\norm{\nb \beta(T,\cdot)}_{L^\infty(\D\Omega)}\no\\
\ls& 2\left(\norm{\nb u(0,\cdot)}_{L^\infty(\D\Omega)}+\norm{\nb \beta(0,\cdot)}_{L^\infty(\D\Omega)}\right),
  \end{align}
  which also guarantee the a priori assumption of \eqref{eq.beta0}.

By \eqref{eq.Dtpcommu}, \eqref{eq.A.4.2}, \eqref{eq.est.nbkdtp}, \eqref{eq.nbdtp} and \eqref{eq.pinbrdtp}, we have
 \begin{align*}
   \norm{D_t\nb \p}_{L^\infty(\Omega)}=&\norm{\nb D_t\p}_{L^\infty(\Omega)}\ls C(K_1)\sum_{\ell=0}^2\norm{\nb^{\ell+1} D_t\p}_{L^2(\Omega)}\\
   \ls & C(K_1,\E(0),E_0(0),\cdots, E_{n+1}(0),\vol\Omega),
 \end{align*}
 which implies for sufficiently small $T>0$
 \begin{align}
   \norm{\nb\p(t,\cdot)}_{L^\infty(\Omega)}\ls 2\norm{\nb\p(0,\cdot)}_{L^\infty(\Omega)}.
 \end{align}
 By \eqref{mhd3} and \eqref{eq.7.42}, we have
 \begin{align}
   \norm{D_t\vv}_{L^\infty(\dm)}\ls& \norm{\D \p}_{L^\infty(\dm)}+\norm{\B}_{L^\infty(\dm)}\norm{\D\B}_{L^\infty(\dm)}\\
   \ls&\norm{\nb \p}_{L^\infty(\Omega)}+\norm{\beta}_{L^\infty(\Omega)}\norm{\nb\beta}_{L^\infty(\Omega)}\\
   \ls&C(K_1,\E(0),E_0(0),\cdots, E_{n+1}(0)),
 \end{align}
 which yields
 \begin{align}\label{eq.7.47}
   \norm{\vv(t,\cdot)}_{L^\infty(\dm)}\ls 2\norm{\vv(0,\cdot)}_{L^\infty(\Omega)}.
 \end{align}

 \eqref{eq.7.38} follows from the same argument since $D_t g_{ab}=\nb_a u_b+\nb_b u_a$ and by \eqref{eq.7.42}
 \begin{align}
   &\abs{g_{ab}(T,y)Y^aY^b-g_{ab}(0,y)Y^aY^b}
   \ls \int_0^T\abs{D_t g_{ab}(s,y)} ds Y^aY^b\\
   \ls&2\int_0^T\norm{\nb_a u_b(s)}_{L^\infty(\Omega)} ds Y^aY^b \ls \frac{1}{2}g_{ab}(0,y)Y^aY^b,
 \end{align}
 if $T$ is sufficiently small. Now the estimate for $\N$ follows from
 $$D_t n_a=h_{NN} n_a,$$
 and the estimates for $x$ and $\D x/\D y$ from
 \begin{align}
   D_t x(t,y)=&v(t,x(t,y)),\\
   D_t\frac{\D x}{\D y} =&\frac{\D \vv(t,x(t,y))}{\D y} =\frac{\D \vv(t,x)}{\D x}\frac{\D x}{\D y},
 \end{align}
 and \eqref{eq.7.47} and \eqref{eq.7.45}, respectively.
\end{proof}

Now we use \eqref{eq.7.38}--\eqref{eq.7.41} to pick a $K_1$, i.e., $\iota_1$, which depends only on its value at $t=0$,
\begin{align}
  \iota_1(t)\gs \iota_1(0)/2.
\end{align}

\begin{lemma}\label{lem.7.9}
  Let $\T$ be as in Lemma \ref{lem.7.7}. Pick $\iota_1>0$ such that
  \begin{align}\label{eq.7.55}
    \abs{\N(x(0,y_1))-\N(x(0,y_2))}\ls \frac{\eps_1}{2}, \text{ whenever } \abs{x(0,y_1)-x(0,y_2)}\ls 2\iota_1.
  \end{align}
  Then if $t\ls \T$, we have
  \begin{align}\label{eq.7.56}
    \abs{\N(x(t,y_1))-\N(x(t,y_2))}\ls \eps_1, \text{ whenever } \abs{x(t,y_1)-x(t,y_2)}\ls 2\iota_1.
  \end{align}
\end{lemma}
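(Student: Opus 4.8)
The plan is to deduce \eqref{eq.7.56} from the choice \eqref{eq.7.55} by a double application of the triangle inequality: once in the ``fibre'', comparing the values of the unit normal at time $t$ with those at time $0$, and once in the ``base'', comparing the points $x(t,y)$ with $x(0,y)$. All the quantitative input is already in Lemma \ref{lem.7.8}.

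Fix $y_1,y_2\in\D\Omega$ with $\abs{x(t,y_1)-x(t,y_2)}\ls 2\iota_1$ and assume $t\ls\T$. Inserting the intermediate values $\N(x(0,y_1))$ and $\N(x(0,y_2))$,
\[
  \abs{\N(x(t,y_1))-\N(x(t,y_2))}\ls\abs{\N(x(t,y_1))-\N(x(0,y_1))}+\abs{\N(x(0,y_1))-\N(x(0,y_2))}+\abs{\N(x(0,y_2))-\N(x(t,y_2))}.
\]
The first and last terms are each at most $\eps_1/16$ by \eqref{eq.7.39}. For the middle term I want to invoke \eqref{eq.7.55}, which requires the base points to be close at time $0$; by \eqref{eq.7.40},
\[
  \abs{x(0,y_1)-x(0,y_2)}\ls\abs{x(t,y_1)-x(t,y_2)}+\abs{x(t,y_1)-x(0,y_1)}+\abs{x(t,y_2)-x(0,y_2)}\ls 2\iota_1+\tfrac{\iota_1}{8}.
\]
Granting that \eqref{eq.7.55} holds at this slightly enlarged radius, the middle term is $\ls\eps_1/2$, and summing gives $\abs{\N(x(t,y_1))-\N(x(t,y_2))}\ls\eps_1/2+\eps_1/16+\eps_1/16<\eps_1$, which is \eqref{eq.7.56}.

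The only point that is not purely mechanical is the mismatch of radii just noted: the spatial drift $\abs{x(t,\bar y)-x(0,\bar y)}$ -- which is controlled by $\int_0^t\norm{v}_{L^\infty}\,ds\ls C(K_1,\E(0),E_0(0),\dots,E_{n+1}(0))\,t$ in view of \eqref{eq.7.42} and \eqref{eq.7.47} -- pushes the base points slightly outside the ball of radius $2\iota_1$ where \eqref{eq.7.55} was imposed. This is harmless: since $\N(x(0,\cdot))$ is the restriction to the compact boundary of a smooth field, it is uniformly continuous, so we may choose $\iota_1$ so that \eqref{eq.7.55} holds at a fixed radius comparable to $\iota_1$ (e.g. $3\iota_1$ in place of $2\iota_1$), and then shrink $\T$ further so that the drift is $\ls\iota_1/2$; both $\iota_1$ and $\T$ still depend only on the $t=0$ data, and with this adjustment the two displays above close the argument. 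There is no genuine obstacle here; the lemma is a bookkeeping corollary of Lemma \ref{lem.7.8}, and its role is simply to propagate to time $t$ the geometric condition that pins down the injectivity radius lower bound $\iota_1$.
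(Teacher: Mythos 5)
Your argument is exactly the paper's: its proof of this lemma is the same double triangle inequality, simply citing \eqref{eq.7.55}, \eqref{eq.7.39} and \eqref{eq.7.40}. The radius mismatch you flag ($2\iota_1+\iota_1/8>2\iota_1$) is genuine in the statement as printed, and your repair --- choosing $\iota_1$ so that \eqref{eq.7.55} holds at a slightly larger radius, or equivalently stating the conclusion \eqref{eq.7.56} for $\abs{x(t,y_1)-x(t,y_2)}\ls \iota_1$ as in the Christodoulou--Lindblad original, where $\iota_1+\iota_1/8\ls 2\iota_1$ and $\eps_1/2+2\cdot\eps_1/16\ls\eps_1$ close the estimate directly --- is the right way to make the bookkeeping exact.
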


\begin{proof}
  \eqref{eq.7.56} follows from \eqref{eq.7.55}, \eqref{eq.7.39} and \eqref{eq.7.40} in view of triangle inequalities.
\end{proof}

Lemma \ref{lem.7.9} allows us to pick a $K_1$ depending only on initial conditions, while Lemma \ref{lem.7.8} gives us $\T>0$ that depends only on the initial conditions and $K_1$ such that, by Lemma \ref{lem.7.9}, $1/\iota_1\ls K_1$ for $t\ls \T$. Thus, we immediately obtain the following theorem.

\begin{theorem}
  There exists a continuous function $\T>0$ such that if
  \begin{align}
    T\ls \T(\K(0),\E(0),E_0(0),\cdots, E_{n+1}(0),\vol\Omega),
  \end{align}
  any smooth solution of the free boundary problem for Euler's equations \eqref{mhd1} and \eqref{eq.phycond} for $0\ls t\ls T$ satisfies
  \begin{align}
    \sum_{s=0}^{n+1} E_s(t)\ls 2\sum_{s=0}^{n+1} E_s(0), \quad 0\ls t\ls T.
  \end{align}
\end{theorem}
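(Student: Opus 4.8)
The plan is to deduce the theorem from Lemmas \ref{lem.7.6}--\ref{lem.7.9} by the continuity (bootstrap) argument of Section 6; the substance is that the pointwise a priori bounds \eqref{eq.2energy81}--\eqref{eq.2energy92} required by Theorems \ref{thm.1energy} and \ref{thm.renergy} get propagated forward on a time interval whose length depends only on the initial data. First I would invoke Lemma \ref{lem.7.9} to fix $\iota_1>0$, equivalently $K_1\gs 1/\iota_1$, depending only on the geometry of $\D\idm_0$ — hence, through $\K(0)$, only on the initial configuration — so that $1/\iota_1\ls K_1$ persists on any interval $[0,\T]$ on which the metric comparison \eqref{eq.7.38} and the smallness of the displacements of $\N$ and of $\D x/\D y$ hold. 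Once $K_1$ is frozen as a function of the data, it is no longer a free parameter.

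With this $K_1$, I would use Lemmas \ref{lem.7.6} and \ref{lem.7.7}, which furnish the core mechanism: \emph{as long as} $E_s(t)\ls 2E_s(0)$ for $0\ls s\ls n+1$ and $\E(t)\ls 2\E(0)$, every quantity $M,M_1,L,K,1/\eps$ entering the hypotheses of Theorems \ref{thm.1energy} and \ref{thm.renergy} is dominated by a continuous function of $K_1,\E(0),E_0(0),\cdots,E_{n+1}(0),\vol\Omega$, and the energies obey the closed system $\abs{\tfrac{d}{dt}E_r}\ls C_r\sum_{s\ls r}E_s$ and $\abs{\tfrac{d}{dt}\E}\ls C_r$ with constants of the same type. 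Then the bootstrap of Lemma \ref{lem.7.8} runs as follows: let $T^*$ be the supremum of $t$ for which these bounds hold on $[0,t]$; by continuity $T^*>0$, on $[0,T^*]$ Gronwall gives $E_s(t)\ls e^{Ct}E_s(0)+C(e^{Ct}-1)$ and $\E(t)\ls\E(0)+Ct$, and for $\T$ small enough — a continuous function only of $K_1,\E(0),E_0(0),\cdots,E_{n+1}(0),\vol\Omega$ — this forces the strict inequalities $E_s(t)<2E_s(0)$ and $\E(t)<2\E(0)$ on $[0,\min(T^*,\T)]$, so $T^*\gs\T$ and \eqref{eq.7.37} holds on $[0,\T]$. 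Finally, since $K_1$ was itself determined by $\K(0)$ and the initial domain, and $E_0(t)=E_0(0)$, $\vol\idm_t=\vol\idm_0$ are conserved, $\T$ can be rewritten as a continuous function of $\K(0),\E(0),E_0(0),\cdots,E_{n+1}(0),\vol\Omega$; summing $E_s(t)\ls 2E_s(0)$ over $0\ls s\ls n+1$ gives the claim.

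The hard part — the reason this is not a one-liner — is the circular dependence: Theorems \ref{thm.1energy} and \ref{thm.renergy} need the pointwise bounds as hypotheses, while those bounds are recovered from the energies only after the estimates are in force. The bootstrap above breaks the circle, and its success rests on two structural facts established earlier. First, every right-hand side in the energy differential inequalities is \emph{linear} in the top-order energy $E_{n+1}$: this is why the extra a priori bound $\abs{\beta}\ls M_1$ was imposed at $r=2$, and why the elliptic estimate \eqref{eq.est.nbrp} trades $\norm{\nb^r\p}_{L^2(\Omega)}$ for $\delta_r\norm{\Pi\nb^r\p}_{L^2(\D\Omega)}$ plus lower-order data, the boundary term then being absorbed via the second-fundamental-form identities relating $\Pi\nb^r\p$ to $\bnb^{r-2}\theta$. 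Second, the Sobolev and trace embeddings used to pass from $L^2$-control of $\nb^{n+1}$ to $L^\infty$- and $L^4$-control of derivatives of order $\ls n-1$ are available precisely because $r\ls n+1$ with $n\in\{2,3\}$, so that no term ever costs more than $E_{n+1}$ itself — which is exactly what makes the Gronwall step legitimate and $\T$ depend only on the listed initial quantities.
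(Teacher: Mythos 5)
Your proposal is correct and follows essentially the same route as the paper: fix $K_1$ via Lemma \ref{lem.7.9} from the initial geometry, then run the continuity/bootstrap argument of Lemma \ref{lem.7.8} on the differential inequalities of Lemma \ref{lem.7.7} (with the pointwise bounds recovered from the energies by Lemma \ref{lem.7.6}), and conclude using conservation of $E_0$ and $\vol\idm_t$. Your write-up merely makes explicit the bootstrap step that the paper compresses into the statement and proof of Lemma \ref{lem.7.8}.
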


\appendix

\section{Preliminaries and Some Estimates}

Let $N^a$ denote the unit normal to $\D\Omega$, $g_{ab}N^aN^b=1$, $g_{ab}N^a T^b=0$ if $T\in T(\D\Omega)$, and let $N_a=g_{ab}N^b$ denote the unit conormal, $g^{ab} N_aN_b=1$. The induced metric $\gamma$ on the tangent space to the boundary $T(\D\Omega)$ extended to be $0$ on the orthogonal complement in $T(\Omega)$ is then given by
\begin{align}
  \gamma_{ab}=g_{ab}-N_aN_b,\quad \gamma^{ab}=g^{ab}-N^aN^b.
\end{align}
The orthogonal projection of an $(r,s)$ tensor $S$ to the boundary is given by
\begin{align}
  (\Pi S)_{b_1\cdots b_s}^{a_1\cdots a_r}=\gamma_{c_1}^{a_1}\cdots \gamma_{c_r}^{a_r}\gamma_{b_1}^{d_1}\cdots \gamma_{b_s}^{d_s} S_{d_1\cdots d_s}^{c_1\cdots c_r},
\end{align}
where
\begin{align}\label{gammaauc}
  \gamma_a^c=\delta_a^c-N_aN^c.
\end{align}
Covariant differentiation on the boundary $\bnb$ is given by
\begin{align}
  \bnb S=\Pi\nb S.
\end{align}
The second fundamental form of the boundary is given by
\begin{align}\label{2ndfundform}
  \theta_{ab}=(\Pi\nb N)_{ab}=\gamma_a^c \nb_c N_b.
\end{align}

Let us now recall some properties of the projection. Since $g^{ab}=\gamma^{ab}+N^aN^b$, we have
\begin{align}\label{eq.CL004.48}
  \Pi(S\cdot R)=\Pi(S)\cdot \Pi(R)+\Pi(S\cdot N)\tilde{\otimes}\Pi(N\cdot R),
\end{align}
where $S\tilde{\otimes} R$ denotes some partial symmetrization of the tensor product $S\otimes R$, i.e., a sum over some subset of the permutations of the indices divided by the number of permutations in that subset. Similarly, we let $S\tilde{\cdot} R$ denote a partial symmetrization of the dot product $S\cdot R$. Now we recall some identities:
\begin{align}
  \Pi\nb^2 q=&\bnb^2 q+\theta \nb_N q,\label{eq.CL004.20}\\
  \Pi\nb^3 q=&\bnb^3 q-2\theta\tilde{\otimes}(\theta\tilde{\cdot}\bnb q)+(\bnb\theta)\nb_N q+3\theta\tilde{\otimes}\bnb\nb_N q,\label{eq.CL004.21}\\
  \Pi\nb^4 q=&\bnb^4 q-\theta\tilde{\otimes}\left(5(\bnb\theta)\tilde{\cdot}\bnb q+8\theta\tilde{\cdot}\bnb^2 q\right)-2(\bnb\theta)\tilde{\otimes}(\theta\tilde{\cdot}\bnb q)\no\\
  &+(\bnb^2\theta)\nb_Nq+4(\bnb\theta)\tilde{\otimes}\bnb\nb_Nq+6\theta\tilde{\otimes}\bnb^2\nb_N q\no\\
  &-3\theta\tilde{\otimes}(\theta\tilde{\cdot}\theta)\nb_N q+3\theta\tilde{\otimes}\theta \nb_N^2q.\label{eq.CL004.22}
\end{align}

\begin{definition}\label{defn.3.3}
  Let $\N(\bar{x})$ be the outward unit normal to $\D\dm$ at $\bar{x}\in \D\dm$. Let $\dist(x_1,x_2)=|x_1-x_2|$ denote the Euclidean distance in $\R^n$, and for $\bar{x}_1, \bar{x}_2\in \D\dm$, let $\dist_{\D\dm} (\bar{x}_1, \bar{x}_2)$ denote the geodesic distance on the boundary.
\end{definition}

\begin{definition}\label{defn.3.4}
  Let $\dist(x,\D\dm)$ be the Euclidean distance from $x$ to the boundary. Let $\iota_0$ be the injectivity radius of the normal exponential map of $\D\dm$, i.e., the largest number such that the map
\begin{align}
  \begin{aligned}
    \D\dm\times (-\iota_0,\iota_0)&\to\{x\in\R^n: \dist(x,\D\dm)<\iota\}\\
    \text{given by } (\bar{x},\iota)&\to x=\bar{x}+\iota \N(\bar{x})
  \end{aligned}
\end{align}
is an injection.
\end{definition}

\begin{definition}\label{defn.3.5}
  Let $0<\eps_1<2$ be a fixed number, and let $\iota_1=\iota_1(\eps_1)$ the largest number such that
  \begin{align}
    \abs{\N(\bar{x}_1)-\N(\bar{x}_2)}\ls \eps_1 \quad \text{whenever } \abs{\bar{x}_1-\bar{x}_2}\ls \iota_1, \; \bar{x}_1,\bar{x}_2\in\D\dm.
  \end{align}
\end{definition}

\begin{lemma}[\mbox{\cite[Lemma 3.9]{CL00}}] \label{lem.CL00lem3.9}
  Let $N$ be the unit normal to $\D\Omega$, and let $h_{ab}=\frac{1}{2}D_tg_{ab}$. On $[0,T]\times \D\Omega$, we have
  \begin{align}\label{eq.CL00lem3.9.1}
    &D_tN_a=h_{NN}N_a,\quad D_tN^c=-2h_d^cN^d+h_{NN}N^c,\\
    &D_t\gamma^{ab}=-2\gamma^{ac}h_{cd}\gamma^{db},\label{eq.CL00lem3.9.2}
  \end{align}
  where $h_{NN}=h_{ab}N^aN^b$. The volume element on $\D\Omega$ satisfies
  \begin{align}\label{eq.CL00lem3.9.3}
    D_td\mu_\gamma=(\tr h-h_{NN})d\mu_\gamma=(\tr \theta u\cdot N+\gamma^{ab}\bnb_a\bar{u}_b)d\mu_\gamma,
  \end{align}
  where $\bar{u}_b$ denotes the tangential component of $u_b$ to the boundary $\D\Omega$.
\end{lemma}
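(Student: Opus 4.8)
The plan is to derive all four formulas by differentiating the defining relations of $N^a$, $N_a$, $\gamma^{ab}$ and $d\mu_\gamma$ in time, using Lemma \ref{lem.CL00lem2.1}, which provides $D_t g_{ab}=\nb_a u_b+\nb_b u_a=2h_{ab}$, $D_t g^{ab}=-2h^{ab}$ and $D_t d\mu_g=g^{ab}h_{ab}d\mu_g$. The structural fact to exploit throughout is that in Lagrangian coordinates the boundary $\D\Omega$ is a fixed submanifold of $\Omega$: any coordinate tangent vector field $T^b$ along $\D\Omega$ is time-independent, $D_t T^b=0$, and the embedding $\D\Omega\hookrightarrow\Omega$ does not move; only the metric $g_{ab}(t,\cdot)$ evolves. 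The (outward, unit) normal $N$ is then characterized at each time purely through $g$ by $g_{ab}N^aN^b=1$ and $g_{ab}N^aT^b=0$ for $T\in T(\D\Omega)$.

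First I would compute $D_tN^c$. Differentiating $g_{ab}N^aN^b=1$ and inserting $D_t g_{ab}=2h_{ab}$ gives $N_b\,D_tN^b=-h_{NN}$, the normal component. Differentiating $g_{ab}N^aT^b=0$ for an arbitrary time-independent tangential $T^b$ gives $g_{ab}(D_tN^a)T^b=-2h_{ab}N^aT^b$; since this holds for every tangential $T$, the covector $g_{bc}D_tN^c+2h_{bc}N^c$ is a multiple of $N_b$, and contracting with $N^b$ and using the normal-component identity pins that multiple to $h_{NN}$. Raising the index yields $D_tN^c=-2h^c_dN^d+h_{NN}N^c$, which is the first identity in \eqref{eq.CL00lem3.9.1}. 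The conormal formula then follows by differentiating $N_a=g_{ab}N^b$: using $D_t g_{ab}=2h_{ab}$, the $D_tN^b$ formula and $g_{ab}h^b_d=h_{ad}$, one gets $D_tN_a=2h_{ab}N^b-2h_{ad}N^d+h_{NN}N_a=h_{NN}N_a$.

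Next, for $D_t\gamma^{ab}$ I would differentiate $\gamma^{ab}=g^{ab}-N^aN^b$, substitute $D_t g^{ab}=-2h^{ab}$ together with the formula for $D_tN^c$ just obtained, and then verify algebraically that the resulting expression $-2h^{ab}+2h^a_dN^dN^b+2h^b_dN^dN^a-2h_{NN}N^aN^b$ coincides with $-2\gamma^{ac}h_{cd}\gamma^{db}$ by expanding the latter via $\gamma^{ac}=g^{ac}-N^aN^c$; this is pure bookkeeping. For the volume element I would write $d\mu_\gamma=\sqrt{\det\bar g}\,dy'$ in a fixed coordinate chart on $\D\Omega$, where $\bar g_{\alpha\beta}$ is the pullback of $g_{ab}$; since the embedding is time-independent, $D_t\bar g_{\alpha\beta}$ is the pullback of $2h_{ab}$, hence $D_t\sqrt{\det\bar g}=\sqrt{\det\bar g}\,\bar g^{\alpha\beta}\bar h_{\alpha\beta}$ with $\bar h_{\alpha\beta}$ the pullback of $h_{ab}$, and $\bar g^{\alpha\beta}\bar h_{\alpha\beta}$ is precisely the trace of $h$ over $T(\D\Omega)$, i.e. $\gamma^{ab}h_{ab}=(g^{ab}-N^aN^b)h_{ab}=\tr h-h_{NN}$. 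To get the last form in \eqref{eq.CL00lem3.9.3}, I would use $h_{ab}=\frac{1}{2}(\nb_a u_b+\nb_b u_a)$ to write $\gamma^{ab}h_{ab}=\gamma^{ab}\nb_a u_b$, split $u_b=\bar u_b+(u\cdot N)N_b$ with $\bar u_b$ tangential, note that $\gamma^{ab}N_b=0$ kills the cross term $\gamma^{ab}(\nb_a(u\cdot N))N_b$, recognize $\gamma^{ab}\nb_a N_b=\gamma^{ab}\theta_{ab}=\tr\theta$ from \eqref{2ndfundform} and the idempotency of $\gamma$, and recognize $\gamma^{ab}\nb_a\bar u_b=\gamma^{ab}\bnb_a\bar u_b$ for the same reason.

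I do not expect a serious obstacle: the statement is a routine exercise in tensor calculus once the rigidity of the Lagrangian coordinates is used. The only points that need care are the justification that $D_tT^b=0$ for tangential coordinate fields (so that differentiating the orthogonality relation genuinely isolates $D_tN$), the correct determination of the normal component of $D_tN^c$, and bookkeeping of index positions in the identifications $-2\gamma^{ac}h_{cd}\gamma^{db}=D_t\gamma^{ab}$ and $\bar g^{\alpha\beta}\bar h_{\alpha\beta}=\gamma^{ab}h_{ab}$; these are exactly the places where the modified definition of $h_{ab}$ relative to \cite{CL00} could otherwise introduce factor-of-two errors, so I would track the constants explicitly there.
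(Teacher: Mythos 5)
Your proposal is correct: the computations of $D_tN^c$, $D_tN_a$, $D_t\gamma^{ab}$ and $D_td\mu_\gamma$ all check out, including the bookkeeping forced by this paper's convention $h_{ab}=\tfrac12 D_tg_{ab}$, and the key structural input (the boundary and its coordinate tangent fields are time-independent in Lagrangian coordinates, so only the metric evolves) is exactly the right one. Note that the paper itself offers no proof of this lemma, citing \cite[Lemma 3.9]{CL00} instead, and your argument is essentially the standard derivation given there; the only cosmetic difference is that one can get $D_tN_a$ slightly faster by observing that the annihilator of the fixed tangent space $T(\D\Omega)$ is one-dimensional, so $D_tN_a$ is automatically proportional to $N_a$ with the factor $h_{NN}$ read off from $g^{ab}N_aN_b=1$, whereas you recover the same statement by first determining $D_tN^c$ from the unit-length and orthogonality relations.
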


\begin{lemma}[\mbox{cf. \cite[Lemma 5.5]{CL00}}] \label{lem.CL00lem5.5}
  Let $w_a=w_{Aa}=\nb_A^r f_a$, $\nb_A^r=\nb_{a_1}\cdots \nb_{a_r}$, $f$ be a $(0,1)$ tensor, and $[\nb_a,\nb_b]=0$. Let $\dv w=\nb_a w^a=\nb^r\dv f$, and let $(\curl w)_{ab}=\nb_aw_b-\nb_b w_a=\nb^r(\curl f)_{ab}$. Then,
  \begin{align}
    |\nb w|^2\ls C(g^{ab}\gamma^{cd}\gamma^{AB}\nb_c w_{Aa}\nb_d w_{Bb}+|\dv w|^2+|\curl w|^2).
  \end{align}
\end{lemma}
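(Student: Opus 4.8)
The plan is to establish this as a purely algebraic pointwise inequality, following \cite[Lemma 5.5]{CL00} verbatim except that our $w$ carries one extra index — the $1$-form index $a$ of $f$ — which is a spectator throughout. I would fix a point and choose an orthonormal frame $e_1,\dots,e_n$ with $e_n=N$, so that an index is called \emph{tangential} when it lies in $\{1,\dots,n-1\}$, where $\gamma^{ab}=\delta^{ab}$, while $\gamma^{an}=0$.

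The first step is to record the key symmetry: since $[\nb_a,\nb_b]=0$ and $w_{Aa}=\nb_{a_1}\cdots\nb_{a_r}f_a$, the $(0,r+2)$ tensor $T_{cAa}:=\nb_c w_{Aa}=\nb_c\nb_{a_1}\cdots\nb_{a_r}f_a$ is totally symmetric in the $r+1$ derivative indices $c,a_1,\dots,a_r$, the $1$-form index $a$ never entering the symmetrization. Expanding $g=\gamma+N\otimes N$ in each of the $r+1$ derivative slots and using this symmetry, $|\nb w|^2=g^{cd}g^{AB}g^{ab}T_{cAa}T_{dBb}$ decomposes into a finite sum of terms, organized by the number $p$ of derivative indices contracted against $N$ and by whether the $1$-form index is tangential or equals $n$. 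The $p=0$ contribution is exactly the ``good'' term on the right-hand side: expanding its $g^{ab}=\gamma^{ab}+N^aN^b$ shows it already contains both the tangential and the $N$-component of the $1$-form index. So the whole problem reduces to controlling the components of $T$ with $p\gs 1$.

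For this I would use two identities repeatedly. The curl identity $(\curl w)_{Aca}=\nb_c w_{Aa}-\nb_a w_{Ac}=T_{cAa}-T_{aAc}$ exchanges a derivative index with the $1$-form index; the divergence identity $(\dv w)_A=\sum_i T_{iAi}$, after isolating the $i=n$ term, reads $T_{nAn}=(\dv w)_A-\sum_{i\ls n-1}T_{iAi}$. The reduction is then an induction on a weight such as $\mu=2p+[\,\text{$1$-form index}=n\,]$ attached to a bad component $T_{n\cdots n\,(\mathrm{tang})\,a}$. If its $1$-form index is tangential, apply the curl identity to one normal derivative index and use the symmetry of the derivative block to rewrite it with one fewer normal derivative index but with $1$-form index $n$, plus a $(\curl w)$-term. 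If its $1$-form index is $n$ and $p\gs 2$, apply the divergence identity to one normal derivative index to reduce to a component with $p-1$ normal indices and a tangential $1$-form index, together with a $(\dv w)_A$-term and an ``almost tangential'' term $\sum_{i\ls n-1}T_{iAi}$ whose remaining normal indices sit inside $A$ and are peeled off recursively. The induction bottoms out at a component with all derivative indices tangential (a piece of the good term, regardless of its $1$-form index) and at $T_{nAn}$ with $A$ tangential, handled directly by the divergence identity since then $\sum_{i\ls n-1}T_{iAi}$ is fully tangential. The weight strictly drops at each step, so the process terminates; summing over the finitely many components with combinatorial constants depending only on $r$ and $n$ yields the claimed bound.

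The only delicate point — and it is bookkeeping rather than analysis — is checking that every error term produced along the recursion (components of $\curl w$ and $\dv w$ with arbitrary index values, and ``$\sum_{i\ls n-1}T_{iAi}$'' terms possibly still carrying normal indices inside $A$) really lies on the admissible right-hand side or feeds into a strictly smaller instance of the induction. This is precisely the combinatorial argument of \cite[Lemma 5.5]{CL00}, and transporting the spectator index $a$ along changes nothing in it.
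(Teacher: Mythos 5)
Your proof is correct and is essentially the argument the paper relies on: the paper gives no proof of this lemma, quoting it from \cite{CL00}, and the proof there is exactly this pointwise orthonormal-frame decomposition, trading normal derivative indices for the curl (swap with the $1$-form index) and for the divergence (normal--normal trace), with the extra index of $f$ carried along as a spectator. Your weight $\mu=2p+[\,\text{$1$-form index}=n\,]$ is slightly heavier bookkeeping than necessary, since each curl or divergence step already lowers the number $p$ of normal derivative indices by one, but the recursion terminates exactly as you claim.
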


\begin{lemma}[\mbox{\cite[Proposition 5.8]{CL00}}] \label{lem.CL00prop5.8}
Let $\iota_0$ and $\iota_1$ be as in Definitions \ref{defn.3.4} and \ref{defn.3.5}, and suppose that $|\theta|+1/\iota_0\ls K$ and $1/\iota_1\ls K_1$. Then with $\tilde{K}=\min(K,K_1)$ we have, for any $r\gs 2$ and $\delta>0$,
\begin{align}
  &\norm{\nb^r q}_{L^2(\D\Omega)}+\norm{\nb^r q}_{L^2(\Omega)}\no\\
  &\qquad\ls C\norm{\Pi \nb^r q}_{L^2(\D\Omega)}+C(\tilde{K},\vol\Omega)\sum_{s\ls r-1} \norm{\nb^s\Delta q}_{L^2(\Omega)},\label{eq.CL00prop5.8.1}\\
  &\norm{\nb^{r-1} q}_{L^2(\D\Omega)}+\norm{\nb^r q}_{L^2(\Omega)}\no\\
  &\qquad\ls \delta\norm{\Pi \nb^r q}_{L^2(\D\Omega)}+C(1/\delta,K,\vol\Omega)\sum_{s\ls r-2} \norm{\nb^s\Delta q}_{L^2(\Omega)}.\label{eq.CL00prop5.8.2}
\end{align}
\end{lemma}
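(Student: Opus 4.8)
The plan is to prove both inequalities simultaneously by induction on $r\ge 2$, controlling the interior norm $\norm{\nb^r q}_{L^2(\Omega)}$ and the boundary norm $\norm{\nb^r q}_{L^2(\D\Omega)}$ separately. The whole argument rests on the two features that single out $\nb^r q$ among general $(0,r)$ tensors: since $\nb q$ is a gradient and $[\nb_a,\nb_b]=0$ its curl vanishes identically, and every contraction (trace) of $\nb^r q$ reduces to $\Delta q$ carrying fewer derivatives. These are exactly what let the right-hand sides involve only $\Pi\nb^r q$ on $\D\Omega$ and the lower-order data $\nb^s\Delta q$.

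First I would establish the boundary estimate by a pointwise argument on $\D\Omega$. Applying Lemma \ref{lem.CL00lem5.5} with $f_a=\nb_a q$ and order $r-2$, so that $w_{Aa}=\nb_A^{r-2}\nb_a q=\nb^{r-1}q$ and $\nb w=\nb^r q$, one bounds $|\nb^r q|^2$ pointwise by the fully tangential part $\gamma^{cd}\gamma^{AB}\nb_c w_{Aa}\nb_d w_{Bb}$ together with $|\dv w|^2+|\curl w|^2$. Here $\curl w=\nb^{r-2}\curl(\nb q)=0$ and $\dv w=\nb^{r-2}\Delta q$, so integrating over $\D\Omega$ yields $\norm{\nb^r q}_{L^2(\D\Omega)}\ls C\norm{\Pi\nb^r q}_{L^2(\D\Omega)}+C\norm{\nb^{r-2}\Delta q}_{L^2(\D\Omega)}$, once the discrepancy between projecting all indices and projecting only the derivative multi-index is absorbed into $\theta$ times lower-order tangential derivatives through \eqref{eq.CL004.20}--\eqref{eq.CL004.21}; the bound $|\theta|\ls K$ keeps these constants uniform.

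Next I would treat the interior norm through a Rellich/Reilly-type integration by parts. Starting from $\int_\Omega g^{ab}g^{AB}\nb_a\nb_A^{r-1}q\,\nb_b\nb_B^{r-1}q\,d\mu_g$ and integrating the $\nb_a$ derivative by parts with the first Gauss formula in \eqref{Gauss} produces an interior term $-\int_\Omega g^{AB}\nb_A^{r-1}q\,\nb_B^{r-1}\Delta q\,d\mu_g$, which is lower order in the Laplacian hierarchy and is disposed of by Cauchy--Schwarz and the inductive hypothesis at order $r-1$, plus a boundary integral $\int_{\D\Omega}N^a g^{AB}\nb_A^{r-1}q\,\nb_a\nb_B^{r-1}q\,d\mu_\gamma$. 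In this boundary integral I would split $\nb_a=N_a\nb_N+\bnb_a$, integrate the tangential piece by parts on $\D\Omega$ using the second Gauss formula in \eqref{Gauss} so that no factor carries $r$ undifferentiated derivatives, and use the decomposition identities \eqref{eq.CL004.20}--\eqref{eq.CL004.22} to express everything through $\Pi\nb^r q$, the second fundamental form $\theta$, and traces that again reduce to $\nb^s\Delta q$. The geometric hypotheses $|\theta|+1/\iota_0\ls K$ and $1/\iota_1\ls K_1$ are precisely what is needed to bound these curvature contributions and the trace and collar-Sobolev constants uniformly in terms of $\tilde{K}=\min(K,K_1)$ and $\vol\Omega$.

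Combining the two steps and absorbing the top-order factor $\norm{\Pi\nb^r q}_{L^2(\D\Omega)}$ back onto the right-hand side yields \eqref{eq.CL00prop5.8.1}. To obtain the sharper form \eqref{eq.CL00prop5.8.2}, where only $\norm{\nb^{r-1}q}_{L^2(\D\Omega)}$ appears on the left and a free parameter $\delta$ multiplies $\norm{\Pi\nb^r q}_{L^2(\D\Omega)}$, I would interpolate: the $(r-1)$-st boundary trace is dominated by $\delta$ times the top-order tangential term plus $C(1/\delta)$ times lower-order data, using a trace interpolation inequality in the collar neighborhood of width comparable to $\iota_0$ furnished by the injectivity radius, after which the $s=r-1$ Laplacian term drops out and the sum truncates at $s\ls r-2$. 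The main obstacle throughout is the boundary integral generated by the interior integration by parts: the uncontrolled normal derivatives must be traded for $\Delta q$ and for curvature terms, and it is exactly at this step that the bounds on $\theta$ and on the injectivity radii enter, guaranteeing the stated dependence of the constants on $\tilde{K}$, $K$, $1/\delta$ and $\vol\Omega$.
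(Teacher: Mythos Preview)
The paper does not actually prove this lemma: it is stated in the appendix with a direct citation to \cite[Proposition~5.8]{CL00} and no argument is given. All of the lemmas in that appendix (Lemmas~\ref{lem.CL00lem5.5}, \ref{lem.CL00prop5.8}--\ref{lem.CL00lemA.7}) are quoted from Christodoulou--Lindblad without proof, so there is nothing in the present paper to compare your proposal against.

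That said, your outline is broadly in the spirit of the original argument in \cite{CL00}: the interior estimate there does come from an integration-by-parts (Hodge/Rellich) identity exploiting that $\nb^r q$ is curl-free with divergence $\nb^{r-2}\Delta q$, and the boundary estimate uses the pointwise decomposition of Lemma~\ref{lem.CL00lem5.5} together with the trace inequality of Lemma~\ref{lem.CL00lemA.7}. One point to be careful about: the passage from the quadratic form $g^{ab}\gamma^{cd}\gamma^{AB}$ in Lemma~\ref{lem.CL00lem5.5} to the full projection $\Pi\nb^r q$ is not as innocent as you suggest, since Lemma~\ref{lem.CL00lem5.5} projects only the derivative indices and leaves the last index $a$ contracted with $g^{ab}$; turning this into $\norm{\Pi\nb^r q}$ plus lower order requires iterating the normal-tangential splitting and produces terms involving $\nb_N\nb^{r-1}q$ that must themselves be controlled. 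In \cite{CL00} this is handled by a separate boundary Hodge-type lemma (their Lemma~5.6), not merely by \eqref{eq.CL004.20}--\eqref{eq.CL004.21}. Likewise, the interpolation step you invoke for \eqref{eq.CL00prop5.8.2} is in \cite{CL00} a consequence of applying the trace estimate \eqref{eq.CL00lemA.7.1} at order $r-1$ and then the interior estimate at order $r$ with a small constant, rather than a direct trace interpolation in a collar.
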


\begin{lemma}[cf. \mbox{\cite[Proposition 5.9]{CL00}}] \label{lem.CL00prop5.9}
  Assume that $0\ls r\ls 4$. Suppose that $|\theta|\ls K$ and $\iota_1\gs 1/K_1$, where $\iota_1$ is as in Definition 3.5 of \cite{CL00}. If $q=0$ on $\D\Omega$, then for $m=0,1$,
  \begin{align}\label{eq.CL00prop5.9.1}
    \norm{\Pi\nb^r q}_{L^2(\D\Omega)}\ls & C(K,K_1)\left(\norm{\theta}_{L^\infty(\D\Omega)}+\sum_{k\ls r-2-m} \norm{\bnb^k \theta}_{L^2(\D\Omega)}\right)\no \\
    &\times\sum_{k\ls r-2+m}\norm{\nb^k q}_{L^2(\D\Omega)}.
  \end{align}
  If, in addition, $|\nb_N q|\gs \eps>0$ and $|\nb_N q|\gs 2\eps \norm{\nb_N q}_{L^\infty(\D\Omega)}$, then
  \begin{align}\label{eq.CL00prop5.9.2}
    &\norm{\bnb^{r-2}\theta}_{L^2(\D\Omega)} \ls\\
    &\qquad C\left(K,K_1,\frac{1}{\eps}\right) \left(\norm{\theta}_{L^\infty(\D\Omega)}+\sum_{k\ls r-3} \norm{\bnb^k \theta}_{L^2(\D\Omega)}\right)\sum_{k\ls r-1} \norm{\nb^k q}_{L^2(\D\Omega)}.\no
  \end{align}
\end{lemma}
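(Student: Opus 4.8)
The plan is to reduce $\Pi\nb^r q$ via the projection identities \eqref{eq.CL004.20}--\eqref{eq.CL004.22} and then estimate the resulting terms by H\"older's inequality together with Sobolev embedding and Gagliardo--Nirenberg interpolation on the boundary $\D\Omega$, a manifold of dimension $n-1\ls 2$. The decisive simplification is that $q=0$ on $\D\Omega$ forces every purely tangential derivative to vanish there: since $\bnb^k q$ is the $k$-th intrinsic covariant derivative of the restriction $q|_{\D\Omega}\equiv 0$, we have $\bnb^k q=0$ on $\D\Omega$ for all $k\gs 0$. Substituting this into \eqref{eq.CL004.20}--\eqref{eq.CL004.22} annihilates the leading term $\bnb^r q$ and every term carrying a factor $\bnb^j q$ with $j\gs 1$, so that $\Pi\nb^r q$ collapses to a finite sum of products, each being a product of tangential derivatives of the second fundamental form $\theta$ (schematically $\bnb^{k_1}\theta\cdots\bnb^{k_p}\theta$) multiplied by a single normal-derivative factor of $q$ (schematically $\bnb^{j}\nb_N q$ or $\nb_N^2 q$), the total number of derivatives in each product being exactly $r$. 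For instance, for $r=2$ one gets $\Pi\nb^2 q=\theta\,\nb_N q$, and for $r=3$ one gets $\Pi\nb^3 q=(\bnb\theta)\nb_N q+3\,\theta\,\tilde{\otimes}\,\bnb\nb_N q$.

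To prove \eqref{eq.CL00prop5.9.1} I would bound each such product in $L^2(\D\Omega)$ by a tame product inequality: for a product of a $\theta$-factor and a $q$-factor whose orders add up to $r$, Gagliardo--Nirenberg interpolation on the $(n-1)$-dimensional boundary controls it by the $L^\infty$ norm of one factor times the top-order $L^2$ norm of the other, plus the symmetric term. Loading the full derivative count onto $\theta$ (so that the $q$-factor enters at the lowest order occurring, in $L^\infty$, and is controlled by Sobolev embedding) produces the $m=0$ version, in which $\theta$ is differentiated up to order $\bnb^{r-2}$ in $L^2$ while only $\nb^k q$ with $k\ls r-2$ appear; loading the full count onto $q$ instead produces the $m=1$ version, in which $\theta$ enters only up to $\bnb^{r-3}$ while $\nb^k q$ with $k\ls r-1$ are allowed. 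The geometric hypotheses $|\theta|\ls K$ and $1/\iota_1\ls K_1$ (Definitions \ref{defn.3.4} and \ref{defn.3.5}) are exactly what is needed to compare intrinsic boundary Sobolev norms across the fixed chart and to bound the undifferentiated $\theta$ uniformly, i.e.\ to control the constants in these interpolation and embedding inequalities. I would organise the bookkeeping by induction on $r$ from the base case $r=2$, feeding the lower-order products into the inductive hypothesis.

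For the second estimate \eqref{eq.CL00prop5.9.2} the point is that the highest tangential derivative of $\theta$ appears linearly, and undifferentiated in $q$, in the expansion of $\Pi\nb^r q$: from \eqref{eq.CL004.20}--\eqref{eq.CL004.22} one has
\begin{align*}
  \Pi\nb^r q=(\bnb^{r-2}\theta)\,\nb_N q+R,
\end{align*}
where $R$ gathers the remaining terms, each carrying a tangential derivative of $\theta$ of order at most $r-3$ together with normal derivatives of $q$. Since $\nb_N q$ is a scalar with $|\nb_N q|\gs\eps>0$, I would solve for the top derivative,
\begin{align*}
  \bnb^{r-2}\theta=\frac{1}{\nb_N q}\left(\Pi\nb^r q-R\right),
\end{align*}
and take $L^2(\D\Omega)$ norms, using $|\nb_N q|\gs\eps$ to obtain $\norm{\bnb^{r-2}\theta}_{L^2}\ls\eps^{-1}\big(\norm{\Pi\nb^r q}_{L^2}+\norm{R}_{L^2}\big)$; the auxiliary hypothesis $|\nb_N q|\gs 2\eps\norm{\nb_N q}_{L^\infty(\D\Omega)}$ guarantees that $1/\nb_N q$ (and the tangential derivatives it produces upon differentiation) stays uniformly comparable to $\eps^{-1}$, so the division costs only a factor $C(K,K_1,1/\eps)$. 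Finally I would bound $\norm{\Pi\nb^r q}_{L^2}$ by the already-proved \eqref{eq.CL00prop5.9.1} with $m=1$ (which keeps $\theta$ only up to order $r-3$), and $\norm{R}_{L^2}$ by the same tame product estimates as in the first part, yielding the asserted bound with $\theta$-derivatives only up to $\bnb^{r-3}$ and $\nb^k q$ up to $k\ls r-1$.

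The main obstacle is the interpolation bookkeeping in the first part: one must check that, in each of the two cases $m=0,1$, \emph{every} product arising from \eqref{eq.CL004.20}--\eqref{eq.CL004.22} admits a tame bound---linear in the single top-order factor---by exactly the asserted combination of $\norm{\bnb^k\theta}_{L^2}$ and $\norm{\nb^k q}_{L^2}$ within the stated index ranges, with the borderline $L^\infty$ factors absorbed by Sobolev embedding on a surface of dimension $n-1\in\{1,2\}$. This is where the restrictions $r\ls 4$ and $n\ls 3$ enter, since they keep the required embeddings ($H^1\hookrightarrow L^\infty$ in dimension $1$, and the $L^4$/$H^{1/2}$-type estimates in dimension $2$) within reach of the available norms and keep each estimate linear in the highest-order quantity. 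By contrast, the collapse of the projection identities and the isolate-and-divide argument for \eqref{eq.CL00prop5.9.2} are comparatively routine once the tame product estimates of the first part are in hand.
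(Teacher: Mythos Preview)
The paper does not supply its own proof of this lemma: it is quoted verbatim from \cite[Proposition~5.9]{CL00} and placed in the appendix among other results imported from that reference without argument. Your sketch is essentially the argument of \cite{CL00}: use $q|_{\D\Omega}=0$ to kill all $\bnb^k q$ terms in the projection identities \eqref{eq.CL004.20}--\eqref{eq.CL004.22}, reduce $\Pi\nb^r q$ to products of $\bnb^j\theta$ with $\bnb^k\nb_N q$ (or $\nb_N^2 q$), bound these by boundary interpolation (Lemmas~\ref{lem.CL00lemA.1}--\ref{lem.CL00lemA.2}) with the two placements of the top derivative giving $m=0$ and $m=1$, and for \eqref{eq.CL00prop5.9.2} isolate the $(\bnb^{r-2}\theta)\nb_N q$ term and divide. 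This is correct and matches the intended proof; one small clarification is that the restriction $r\ls 4$ is chiefly because the projection identities are only recorded up to that order (and the combinatorics stays tame), rather than because of a hard Sobolev threshold tied to $n\ls 3$.
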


\begin{lemma}[cf. \mbox{\cite[Proposition 5.10]{CL00}}] \label{lem.CL00prop5.10}
  Assume that $0\ls r\ls 4$ and that $|\theta|+1/\iota_0\ls K$. If $q=0$ on $\D\Omega$, then
  \begin{align}
    \norm{\nb^{r-1} q}_{L^2(\D\Omega)} \ls& C\left(\norm{\bnb^{r-3}\theta}_{L^2(\D\Omega)} \norm{\nb_N q}_{L^\infty(\D\Omega)}+\norm{\nb^{r-2}\Delta q}_{L^2(\Omega)}\right)\no\\
    &+C\left(K,\vol\Omega,\norm{\theta}_{L^2(\D\Omega)}\right)\no\\
    &\qquad\times\left(\norm{\nb_N q}_{L^\infty(\D\Omega)}+\sum_{s=0}^{r-3} \norm{\nb^s\Delta q}_{L^2(\Omega)}\right).
  \end{align}
\end{lemma}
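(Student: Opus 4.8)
The plan is to deduce the estimate from the interior elliptic bound \eqref{eq.CL00prop5.8.1} together with the boundary decomposition identities \eqref{eq.CL004.20}--\eqref{eq.CL004.22}, using crucially that $q=0$ on $\D\Omega$ forces $\bnb^j q=0$ there for every $j$, so that on $\D\Omega$ the full gradient of $q$ is purely normal, $\nb q=(\nb_N q)N$. Since the lemma is needed only for $0\ls r\ls 4$, the cases $r\ls 2$ are immediate (for $r=2$ one simply writes $\norm{\nb q}_{L^2(\D\Omega)}=\norm{\nb_N q}_{L^2(\D\Omega)}\ls |\D\Omega|^{1/2}\norm{\nb_N q}_{L^\infty(\D\Omega)}$ and bounds $|\D\Omega|\ls C(K,\vol\Omega)$), and the content lies in $r=3$ and $r=4$, which I would treat by a short downward recursion in $r$.

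First I would apply \eqref{eq.CL00prop5.8.1} with $r$ replaced by $r-1$, obtaining
\[
  \norm{\nb^{r-1}q}_{L^2(\D\Omega)}\ls C\norm{\Pi\nb^{r-1}q}_{L^2(\D\Omega)}+C(\tilde{K},\vol\Omega)\sum_{s\ls r-2}\norm{\nb^s\Delta q}_{L^2(\Omega)},
\]
which already produces all the $\nb^s\Delta q$ terms on the right of the claim, so it remains to control $\norm{\Pi\nb^{r-1}q}_{L^2(\D\Omega)}$. For $r=3$, identity \eqref{eq.CL004.20} together with $\bnb^2 q=0$ on $\D\Omega$ gives $\Pi\nb^2 q=\theta\,\nb_N q$, whence $\norm{\Pi\nb^2 q}_{L^2(\D\Omega)}\ls\norm{\theta}_{L^2(\D\Omega)}\norm{\nb_N q}_{L^\infty(\D\Omega)}$, which is the $\bnb^{r-3}\theta=\theta$ contribution. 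For $r=4$, \eqref{eq.CL004.21} together with $\bnb q=\bnb^3 q=0$ on $\D\Omega$ reduces $\Pi\nb^3 q$ to $(\bnb\theta)\nb_N q+3\,\theta\tilde{\otimes}\bnb\nb_N q$: the first summand yields the top-order term $\norm{\bnb\theta}_{L^2(\D\Omega)}\norm{\nb_N q}_{L^\infty(\D\Omega)}$, while for the second I would note that, since $\theta_{ab}N^b=\tfrac12\bnb_a(g_{cd}N^cN^d)=0$, one has $\bnb_b(\nb_N q)=\gamma_b^c N^a\nb_c\nb_a q$, a contraction of $\nb^2 q$, so $\norm{\bnb\nb_N q}_{L^2(\D\Omega)}\ls\norm{\nb^2 q}_{L^2(\D\Omega)}$, and the latter is bounded by the $r=3$ estimate just obtained. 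Substituting back and using $\norm{\theta}_{L^\infty(\D\Omega)}\ls K$ gives the claim for $r=4$.

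The main difficulty is not analytic but combinatorial: one has to check that along this recursion no tangential derivative of $\theta$ of order larger than $r-3$ ever appears in the $L^2(\D\Omega)$ slot, that the top-order $\theta$-factor is always paired with $\nb_N q$ in $L^\infty(\D\Omega)$ and never with a higher-regularity norm of $q$, and that every remaining lower-order $\theta$-contribution can be absorbed into a constant of the form $C(K,\vol\Omega,\norm{\theta}_{L^2(\D\Omega)})$ while preserving linearity in $\norm{\nb_N q}_{L^\infty(\D\Omega)}$ and in $\sum_{s\ls r-3}\norm{\nb^s\Delta q}_{L^2(\Omega)}$. This is exactly the structure of \cite[Proposition 5.10]{CL00}; since $q$ here is an arbitrary function vanishing on $\D\Omega$ (the magnetic field enters the rest of the paper only through the subsequent choice $q=\p$), that argument carries over verbatim, the only point being that the geometric constants $K$ and $K_1$ in \eqref{eq.CL00prop5.8.1} are the ones furnished by the a priori assumptions, so that $\tilde{K}=\min(K,K_1)$ and $\vol\Omega$ enter the final constants in the stated way.
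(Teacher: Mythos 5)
The paper itself gives no argument for Lemma \ref{lem.CL00prop5.10}: it is quoted from \cite[Proposition 5.10]{CL00} in the appendix. Your route --- apply \eqref{eq.CL00prop5.8.1} with $r$ replaced by $r-1$, then reduce $\Pi\nb^{r-1}q$ on $\D\Omega$ via \eqref{eq.CL004.20}--\eqref{eq.CL004.21}, using that $q=0$ on $\D\Omega$ kills all $\bnb^j q$, that $\theta_{ab}N^b=0$ so $\bnb\nb_N q$ is a contraction of $\nb^2q$, and closing by downward recursion from $r=4$ to $r=3$ --- is sound, and it is essentially the same circle of ideas (elliptic estimate plus projection identities plus boundary interpolation) by which \cite{CL00} prove this proposition; your treatment of the trivial cases $r\ls 2$ via $|\D\Omega|\ls C(K,\vol\Omega)$ is also fine. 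The one point where your argument falls short of the literal statement is the constant structure: \eqref{eq.CL00prop5.8.1} attaches $C(\tilde K,\vol\Omega)$ to the whole sum $\sum_{s\ls r-2}\norm{\nb^s\Delta q}_{L^2(\Omega)}$, so your derivation produces the top interior term $\norm{\nb^{r-2}\Delta q}_{L^2(\Omega)}$ with a $K$-dependent (and, through $\tilde K$, nominally $K_1$-dependent) coefficient, whereas the lemma asserts an absolute constant $C$ on that term (as it does on $\norm{\bnb^{r-3}\theta}_{L^2(\D\Omega)}\norm{\nb_N q}_{L^\infty(\D\Omega)}$, which you do obtain with a universal constant). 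This weakening is harmless for the way such estimates are used in this paper, since $K$ and $\vol\Omega$ are among the a priori controlled quantities and the second constant already depends on them; but if you want the statement exactly as written you cannot get it from \eqref{eq.CL00prop5.8.1} as a black box --- you would have to redo the bookkeeping inside the proof of that elliptic estimate (or follow the original argument in \cite{CL00}), which is organized precisely so that the two leading terms carry constants independent of $K$. It would be worth one sentence in your write-up acknowledging this, or else stating the lemma in the slightly weaker form you actually prove.
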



\begin{lemma}[\mbox{\cite[Lemma A.1]{CL00}}] \label{lem.CL00lemA.1}
    If $\alpha$ is a $(0,r)$ tensor, then with $a=k/m$ and a constant $C$ that only depends on $m$ and $n$, such that
  \begin{align}
    \norm{\bnb^k\alpha}_{L^s(\D\Omega)}\ls C\norm{\alpha}_{L^q(\D\Omega)}^{1-a}\norm{\bnb^m \alpha}_{L^p(\D\Omega)}^a,
  \end{align}
  if
   \begin{align*}
     \frac{m}{s}=\frac{k}{p}+\frac{m-k}{q}, \quad 2\ls p\ls s\ls q\ls \infty.
   \end{align*}
\end{lemma}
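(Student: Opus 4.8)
The assertion is the endpoint case $a=k/m$ of the Gagliardo--Nirenberg--Sobolev interpolation inequality, stated for tensor fields on the compact $(n-1)$-dimensional boundary hypersurface $\D\Omega$ equipped with the intrinsic Levi--Civita connection $\bnb$ of the induced metric $\gamma$. The plan is to reduce it to the flat, scalar-valued case and then invoke the classical Euclidean inequality. First I would localize: fix a finite atlas of geodesic normal-coordinate charts covering $\D\Omega$ together with a subordinate smooth partition of unity $\{\chi_i\}$, $\sum_i\chi_i=1$, write $\alpha=\sum_i\chi_i\alpha$, and estimate each $\chi_i\alpha$ separately. In each chart the components of $\bnb^j(\chi_i\alpha)$ equal the ordinary partial derivatives of order $j$ of the components of $\chi_i\alpha$ plus lower-order terms whose coefficients are fixed polynomials in $\gamma$, $\gamma^{-1}$ and finitely many derivatives of $\gamma$ (Christoffel symbols and their derivatives). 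Thus, up to lower-order contributions, the problem reduces to the Euclidean inequality
\[
 \norm{D^k u}_{L^s(\R^{n-1})}\ls C\,\norm{u}_{L^q(\R^{n-1})}^{1-a}\norm{D^m u}_{L^p(\R^{n-1})}^{a},\qquad a=\tfrac km,
\]
for compactly supported $u$.

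This Euclidean inequality holds precisely under the stated hypotheses $\tfrac ms=\tfrac kp+\tfrac{m-k}q$ and $2\ls p\ls s\ls q\ls\infty$: it is the distinguished case of Gagliardo--Nirenberg in which the dimensional terms on the two sides balance, so the exponent relation becomes dimension-free and the constant depends only on $n-1$, $k$ and $m$ — hence on $m$ and $n$ after using $k\ls m$. For its proof I would use the elementary route: iterate a Landau--Kolmogorov-type estimate
\[
 \norm{D^{j}u}_{L^{r_j}}\ls C\,\norm{D^{j-1}u}_{L^{r_{j-1}}}^{1/2}\norm{D^{j+1}u}_{L^{r_{j+1}}}^{1/2},
\]
obtained by one integration by parts followed by H\"older's inequality, with exponents chosen along the ``$a=k/m$'' line and combined with the relevant endpoint Sobolev embeddings; finitely many such steps yield the asserted estimate with a constant of the claimed type.

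For the reassembly I would apply the Euclidean inequality to each component of each $\chi_i\alpha$, sum over $i$, and dispose of the lower-order terms by invoking Lemma~\ref{lem.CL00lemA.1} itself at strictly lower differential orders (an induction on $k$). The step I expect to be the main obstacle is obtaining the genuinely \emph{multiplicative} form rather than a merely additive one: a fixed partition of unity naturally produces only $\norm{\bnb^k\alpha}_{L^s(\D\Omega)}\ls C(\norm{\alpha}_{L^q(\D\Omega)}+\norm{\bnb^m\alpha}_{L^p(\D\Omega)})$, and since $\D\Omega$ is a fixed compact manifold there is no dilation available to upgrade this to the product form. The remedy is to run the iterated integration-by-parts interpolation intrinsically on $\D\Omega$ — each step being $\int\abs{\bnb^j\alpha}^{2}\,d\mu_\gamma=-\int\langle\bnb^{j-1}\alpha,\bnb^{\ast}\bnb^j\alpha\rangle\,d\mu_\gamma$ followed by H\"older — and to absorb the curvature commutator terms using that they are of strictly lower order and that the chosen exponents lie on the scale-invariant line $a=k/m$, which is exactly what keeps the final constant independent of everything but $m$ and $n$.
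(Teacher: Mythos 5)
The paper does not prove this statement at all: it is quoted verbatim as Lemma A.1 of \cite{CL00}, and the relevant proof there (as in Hamilton's interpolation inequalities for tensors on compact manifolds) is purely intrinsic — one proves the one-step estimate $\norm{\bnb^j\alpha}_{L^r(\D\Omega)}^2\ls C\norm{\bnb^{j-1}\alpha}_{L^{r_1}(\D\Omega)}\norm{\bnb^{j+1}\alpha}_{L^{r_2}(\D\Omega)}$ by writing $\abs{\bnb^j\alpha}^r=\langle\bnb^j\alpha,\bnb^j\alpha\rangle\abs{\bnb^j\alpha}^{r-2}$, integrating by parts once on the closed manifold $\D\Omega$ (no boundary terms), and applying H\"older; iterating these steps along the line $a=k/m$ gives the multiplicative inequality with a constant depending only on the exponents, hence on $m$ and $n$. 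Your final paragraph does land on exactly this mechanism, so the core idea is present; but the route you present as the main argument, and the way you justify the intrinsic one, both have genuine problems.

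First, the chart-plus-partition-of-unity reduction cannot prove the lemma as stated. Beyond the additive-versus-multiplicative defect you yourself point out, any bound obtained that way carries constants depending on the atlas, on the metric coefficients $\gamma$, $\gamma^{-1}$ and their derivatives in the charts (equivalently on the second fundamental form $\theta$ and the geometry of $\D\Omega_t$, which vary in time and are only controlled by the a priori quantities $K$, $K_1$); the lemma asserts a constant depending only on $m$ and $n$, and this geometry-independence is precisely what the paper needs when it applies the inequality on the moving boundary. Second, your remedy misidentifies the key point: the intrinsic iterated integration-by-parts argument produces \emph{no} curvature commutator terms, because one never interchanges covariant derivatives — one only moves a single $\bnb$ off the full tensor $\bnb^j\alpha$ and uses that the resulting divergence is a trace of $\bnb^{j+1}\alpha$. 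If commutator terms did appear, they could not be ``absorbed'' without introducing bounds on the curvature of $\gamma$ into $C$, and the appeal to the ``scale-invariant line $a=k/m$'' would not help, since (as you note earlier) a fixed compact manifold admits no dilations; what actually keeps $C$ purely numerical is that every step is H\"older plus one integration by parts, with no geometric input. So the proposal should discard the chart-based reduction and the induction ``to dispose of lower-order terms'', and instead carry out the intrinsic Landau--Kolmogorov iteration directly for the tensors $\bnb^j\alpha$, which is the argument of \cite{CL00}.
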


\begin{lemma}[\mbox{\cite[Lemma A.2]{CL00}}] \label{lem.CL00lemA.2}
  Suppose that for $\iota_1\gs 1/K_1$
  \begin{align}
    \abs{\N(\bar{x}_1)-\N(\bar{x}_2)}\ls \eps_1, \quad \text{whenever } |\bar{x}_1-\bar{x}_2|\ls \iota_1, \; \bar{x}_1,\bar{x}_2\in\D\dm,
  \end{align}
  and
  \begin{align}
    C_0^{-1}\gamma_{ab}^0(y) Z^aZ^b\ls \gamma_{ab}(t,y)Z^aZ^b\ls C_0\gamma_{ab}^0(y) Z^aZ^b, \quad \text{if } Z\in T(\Omega),
  \end{align}
  where $\gamma_{ab}^0(y)=\gamma_{ab}(0,y)$. Then if $\alpha$ is a $(0,r)$ tensor,
  \begin{align}
    &\norm{\alpha}_{L^{(n-1)p/(n-1-kp)}(\D\Omega)}\ls C(K_1) \sum_{\ell=0}^k \norm{\nb^\ell \alpha}_{L^p(\D\Omega)}, \quad 1\ls p<\frac{n-1}{k},\\
    &\norm{\alpha}_{L^\infty(\D\Omega)}\ls \delta\norm{\nb^k \alpha}_{L^p(\D\Omega)}+C_\delta(K_1)\sum_{\ell=0}^{k-1} \norm{\nb^\ell \alpha}_{L^p(\D\Omega)}, \quad k>\frac{n-1}{p},\label{eq.A.32}
  \end{align}
  for any $\delta>0$.
\end{lemma}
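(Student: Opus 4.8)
The plan is to obtain this as a scale-invariant Gagliardo--Nirenberg--Sobolev inequality on the compact $(n-1)$-dimensional manifold $\D\Omega$ equipped with the metric $\gamma$, reducing it to the classical Euclidean inequalities by a covering argument at the fixed length scale $\iota_1\sim 1/K_1$; this is Lemma~A.2 of \cite{CL00}, so one may also simply invoke that result. First I would construct a uniform atlas. By Definition~\ref{defn.3.5} and the first hypothesis, for every $\bar x_0\in\D\dm$ the part of $\D\dm$ inside a Euclidean ball $B(\bar x_0,c\iota_1)$ (with $c$ dimensional) is, after rotating $\N(\bar x_0)$ to a coordinate direction, the graph of a function $\phi$ with $\phi(0)=0$, $\nb\phi(0)=0$ and $|\nb\phi|\ls C\eps_1<2C$; the graph map then furnishes a chart in which the $t=0$ metric $\gamma^0$ is equivalent, with dimensional constants, to the flat metric on a ball of radius $\sim\iota_1$ in $\R^{n-1}$, and by the second hypothesis the time-$t$ metric $\gamma$ is equivalent to that flat metric with constants depending only on $n$ and the fixed number $C_0$. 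Choosing the $\bar x_0$ along an $\iota_1$-net of $\D\dm$ produces a finite atlas $\{U_j\}$ of bounded multiplicity depending only on $n$.

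Next I would establish the inequalities on each chart. Rescaling the coordinate ball to the unit ball of $\R^{n-1}$ introduces only powers of $\iota_1$, hence of $K_1$; on the unit ball I would then invoke the classical embeddings $W^{k,p}\hookrightarrow L^{(n-1)p/(n-1-kp)}$ for $1\ls p<(n-1)/k$, and the interpolated form $\norm{u}_{L^\infty}\ls\delta\norm{\nb^k u}_{L^p}+C_\delta\norm{u}_{L^p}$ for $k>(n-1)/p$ (which follows from the borderline embedding $W^{k,p}\hookrightarrow C^0$ on a ball in $\R^{n-1}$ together with a dilation argument). Passing between the covariant derivatives $\nb^\ell$ and the coordinate derivatives costs only terms of strictly lower differential order whose coefficients are built from the Christoffel symbols and their derivatives and are bounded by powers of $K_1$; these terms are absorbed into the sums $\sum_{\ell\ls k}$ and $\sum_{\ell\ls k-1}$ on the right-hand sides. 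This yields both asserted inequalities on each $U_j$ with constants of the form $C(K_1)$.

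Finally I would reassemble. For \eqref{eq.A.32} it suffices to take the maximum over $j$. For the finite-exponent inequality, set $q=(n-1)p/(n-1-kp)\gs p$, raise the local bound to the power $q$, and sum over $j$: since $x\mapsto x^{q/p}$ is superadditive on $[0,\infty)$ one gets $\sum_j\norm{\nb^\ell\alpha}_{L^p(U_j)}^q\ls\big(\sum_j\norm{\nb^\ell\alpha}_{L^p(U_j)}^p\big)^{q/p}\ls C\norm{\nb^\ell\alpha}_{L^p(\D\Omega)}^q$, the last step by the bounded multiplicity of the atlas, and taking $q$-th roots gives the claim with a constant still of the form $C(K_1)$, in particular independent of $\vol\Omega$.

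The only genuinely delicate part is the local step: carefully tracking the powers of $K_1$ produced by rescaling the charts from radius $\sim 1/K_1$ to radius $1$, and verifying that converting covariant derivatives to coordinate derivatives really produces only absorbable lower-order terms so that they land in the stated sums. One should also note that using the superadditivity ($\ell^q\hookrightarrow\ell^p$) reassembly in the last step, rather than a plain triangle inequality over the charts, is exactly what keeps the final constants free of $\vol\Omega$.
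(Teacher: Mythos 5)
Your overall strategy (uniform graph charts of size $\sim\iota_1\sim 1/K_1$ built from the normal-variation hypothesis, comparability of $\gamma$ with the flat metric via the $C_0$-equivalence, rescaled Euclidean Sobolev/Morrey inequalities on unit balls, and a bounded-multiplicity reassembly that keeps the constant independent of $\vol\Omega$) is exactly the standard route; note that the paper itself gives no proof here, it simply quotes \cite[Lemma A.2]{CL00}, whose argument is of this type. However, the step you yourself flag as delicate is, as written, a genuine gap: you claim that converting the covariant derivatives $\nb^\ell\alpha$ of a \emph{tensor} into coordinate derivatives of its components costs only lower-order terms with coefficients bounded by powers of $K_1$. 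In graph coordinates the Christoffel symbols of the induced metric are built from second derivatives of the graph function, i.e.\ from the second fundamental form $\theta$ (and, for $k\gs 2$, from $\bnb\theta$, etc.); the hypotheses of this lemma control only $\iota_1$ (equivalently $K_1$) and the $C_0$-equivalence of the metrics, not $\theta$. So your constants would secretly depend on $\norm{\theta}_{L^\infty(\D\Omega)}\ls K$ and its derivatives, which is precisely what the conclusion $C(K_1)$ forbids, and which matters in this paper because the lemma is invoked (e.g.\ in Lemma \ref{lem.7.6}) to \emph{derive} bounds on $\theta$, so curvature dependence here would be circular.

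The standard repair, and the reason the lemma is true with only $K_1$, is to never differentiate the charts or the metric: reduce to scalar functions via Kato's inequality. Since the left-hand sides involve only $\abs{\alpha}$, apply the first-order scalar Sobolev (resp.\ Morrey) inequality on $\D\Omega$ — which your chart construction does give with constant $C(K_1)$, because for scalars no Christoffel symbols appear and only the metric equivalence and $\abs{\nb\phi}$ enter — to the functions $f_\ell=\abs{\nb^\ell\alpha}$, using $\abs{\bnb\,\abs{S}}\ls\abs{\bnb S}\ls\abs{\nb S}$ almost everywhere, and iterate $k$ times through the exponent chain $p\to p^{*}\to\cdots$; the $\delta$-form \eqref{eq.A.32} then follows from the borderline case together with an interpolation of the intermediate norms (as in Lemma \ref{lem.CL00lemA.1}) and Young's inequality, rather than a dilation argument, since the charts have the fixed scale $1/K_1$. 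A second, minor point: your gradient bound $\abs{\nb\phi}\ls C\eps_1$ for the graph function degenerates as $\eps_1\to 2$; since $\eps_1<2$ is a fixed number this only means the constants also depend on $\eps_1$ (and on $C_0$), which is harmless but should be said. With the Kato/iteration reduction in place of the covariant-to-coordinate conversion, the rest of your argument, including the superadditive ($\ell^p\hookrightarrow\ell^q$) reassembly that keeps the constant volume-free, is fine.
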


\begin{lemma}[\mbox{\cite[Lemma A.3]{CL00}}] \label{lem.CL00lemA.3}
  With notation as in Lemmas \ref{lem.CL00lemA.1} and \ref{lem.CL00lemA.2}, we have
  \begin{align}
    \sum_{j=0}^k\norm{\nb^j\alpha}_{L^s(\Omega)}\ls C\norm{\alpha}_{L^q(\Omega)}^{1-a}\left(\sum_{i=0}^m \norm{\nb^i\alpha}_{L^p(\Omega)}K_1^{m-i}\right)^a.
  \end{align}
\end{lemma}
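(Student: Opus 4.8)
The plan is to reduce the stated estimate to the classical (Euclidean) Gagliardo--Nirenberg inequality by passing to the $x$-coordinates $x=f_t(y)$, in which the metric $g$ becomes the Euclidean metric $\delta_{ij}$ and the Christoffel symbols vanish. First I would express $\alpha$ in the $x$-coordinates as the component tensor $w_{i_1\cdots i_r}(t,x)$. Iterating the transformation property \eqref{eq.covtensor} and using that the connection is flat in the $x$-coordinates, the fully covariant derivative $\nb^j\alpha$ (expressed in the $y$-coordinates) corresponds exactly to the iterated Euclidean partial derivative $\D^j w$; by the norm invariance \eqref{eq.norminv} one has the pointwise identity $|\nb^j\alpha|_g=|\D^j w|$, and since $d\mu_g=dx$ under the change of variables, $\norm{\nb^j\alpha}_{L^s(\Omega)}=\norm{\D^j w}_{L^s(\dm)}$ for every $j$ and every exponent $s$. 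This turns the assertion into the Euclidean inequality
\begin{align*}
  \sum_{j=0}^k\norm{\D^j w}_{L^s(\dm)}\ls C\norm{w}_{L^q(\dm)}^{1-a}\left(\sum_{i=0}^m \norm{\D^i w}_{L^p(\dm)}K_1^{m-i}\right)^a
\end{align*}
for the $\R^{n^r}$-valued function $w$ (the components of $\alpha$) on the bounded domain $\dm\subset\R^n$.

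Next I would establish this Euclidean inequality. The key observation is that the exponent relation $m/s=k/p+(m-k)/q$ of Lemma \ref{lem.CL00lemA.1}, combined with $a=k/m$, is precisely the Lebesgue interpolation relation $1/s=a/p+(1-a)/q$, so the bound is exactly the classical Gagliardo--Nirenberg inequality in its multiplicative interpolation form (with the top-order term $j=k$ dominant, the lower-order terms $j<k$ following a fortiori after Young's inequality). Since $\dm$ carries the hypotheses inherited from Lemma \ref{lem.CL00lemA.2}, namely injectivity radius $\iota_1\gs 1/K_1$ and a metric comparable to its initial value, there is a Sobolev extension operator $E\colon W^{m,p}(\dm)\to W^{m,p}(\R^n)$ whose norm is controlled solely in terms of $K_1$ when measured against the scale-weighted norm $\sum_i \norm{\D^i\,\cdot\,}_{L^p}K_1^{m-i}$. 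Applying the full-strength Gagliardo--Nirenberg inequality on $\R^n$ to $Ew$ and restricting back to $\dm$ yields the desired bound.

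The role of the weights $K_1^{m-i}$ is explained by scaling: the relation $m/s=k/p+(m-k)/q$ makes the inequality invariant under the dilation $x\mapsto\lambda x$, and since $\dm$ carries the intrinsic length scale $\iota_1\sim 1/K_1$, rescaling to unit size produces a domain on which the inequality holds with a constant depending only on $m$ and $n$. Undoing the rescaling reinstates the factors $K_1^{m-i}$ on the $i$-th order terms, which is why the right-hand side is this scale-covariant combination rather than a plain $W^{m,p}$ norm.

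I expect the main obstacle to be obtaining the extension operator --- equivalently, a covering of $\dm$ by balls of radius $\sim\iota_1$ together with a local application of the Euclidean inequality and a subordinate partition of unity --- with a constant depending only on $n$, $m$ and $K_1$, and not on finer geometric features of $\dm$. This uniformity, which is what makes the estimate usable in the energy argument, rests entirely on the lower bound $\iota_1\gs 1/K_1$ and the metric comparability from Lemma \ref{lem.CL00lemA.2}, and tracking the weights $K_1^{m-i}$ cleanly through both the extension and the rescaling is the delicate bookkeeping step.
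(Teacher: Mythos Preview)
The paper does not prove this lemma at all: it is stated in the appendix with the citation \cite[Lemma A.3]{CL00} and no argument is given, so there is nothing in the present paper to compare your proposal against.

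Your outline is a correct and standard route to the result. Passing to the $x$-coordinates via \eqref{eq.covtensor} and \eqref{eq.norminv} does reduce the assertion to the Euclidean Gagliardo--Nirenberg inequality on $\dm\subset\R^n$, and the relation $m/s=k/p+(m-k)/q$ with $a=k/m$ is precisely the borderline case $1/s=a/p+(1-a)/q$ of that inequality, which is scale invariant and therefore produces the homogeneous weights $K_1^{m-i}$ after dilating by $\iota_1$. One small clarification: once you are in the $x$-coordinates there is no ``metric comparable to its initial value'' hypothesis in play --- the metric is simply $\delta_{ij}$. The only geometric input needed to get an extension operator (or a uniform covering argument) with constant depending just on $K_1$ is the lower bound $\iota_1\gs 1/K_1$ on the scale at which the unit normal varies by at most $\eps_1$, exactly as in Definition~\ref{defn.3.5}. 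With that in hand your plan goes through; this is essentially the proof given in \cite{CL00}.
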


\begin{lemma}[\mbox{\cite[Lemma A.4]{CL00}}] \label{lem.CL00lemA.4}
  Suppose that $\iota_1\gs 1/K_1$ and $\alpha$ is a $(0,r)$ tensor. Then
  \begin{align}
    \norm{\alpha}_{L^{np/(n-kp)}(\Omega)} \ls& C\sum_{\ell=0}^k K_1^{k-\ell} \norm{\nb^\ell \alpha}_{L^p(\Omega)}, \quad 1\ls p<\frac{n}{k},\label{eq.A.4.1}\\
    \norm{\alpha}_{L^\infty(\Omega)}\ls &C\sum_{\ell=0}^k K_1^{n/p-\ell} \norm{\nb^\ell \alpha}_{L^p(\Omega)}, \quad k>\frac{n}{p}.\label{eq.A.4.2}
  \end{align}
\end{lemma}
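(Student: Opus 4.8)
The plan is to deduce both weighted inequalities from the classical (unweighted) Sobolev embeddings on uniformly regular Lipschitz domains, by combining a covering of $\dm$ by balls of radius comparable to $1/K_1$ with a scaling argument; the smallness of the variation of the unit normal fixed once and for all in Definition \ref{defn.3.5} is what keeps the rescaled pieces uniformly regular.

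First I would pass to the $x$-coordinates. Since $g_{ab}$ is the pullback of the Euclidean metric, $d\mu_g=dx$ and the norm invariance \eqref{eq.norminv} give $\norm{\nb^\ell\alpha}_{L^p(\Omega)}=\norm{\D^\ell w}_{L^p(\dm)}$, where $w$ denotes the tensor $\alpha$ written in the $x$-coordinates; hence it suffices to prove the Euclidean estimates for $w$ on $\dm\subset\R^n$, using only $\iota_1\gs 1/K_1$ and the injectivity of the normal exponential map (Definitions \ref{defn.3.4}--\ref{defn.3.5}). Fix $\rho=c/K_1$ for a small dimensional constant $c$ and cover $\overline{\dm}$ by balls $\set{B_\rho(x_j)}_j$, $x_j\in\overline{\dm}$, with uniformly bounded overlap. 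The geometric point is that, after the dilation $x\mapsto(x-x_j)/\rho$, each set $U_j:=B_\rho(x_j)\cap\dm$ becomes a connected Lipschitz domain whose Lipschitz character depends only on $n$ (and the fixed number $\eps_1$): when $B_{2\rho}(x_j)$ meets $\D\dm$, Definition \ref{defn.3.5} (together with $2\rho<\iota_1$) forces $\D\dm\cap B_{2\rho}(x_j)$ to be a graph, over a disc in the tangent plane at the nearest boundary point, with Lipschitz constant $\le C\eps_1$; when $B_{2\rho}(x_j)\subset\dm$ the dilated piece is the unit ball.

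Next, on the dilated piece I would apply the standard Sobolev embedding for Lipschitz domains of controlled character — $W^{k,p}\hookrightarrow L^{np/(n-kp)}$ for $1\ls p<n/k$, and $W^{k,p}\hookrightarrow L^\infty$ for $k>n/p$, with constant depending only on $n,k,p$ (and $\eps_1$) — to $f(y)=w(\rho y+x_j)$. Since $\nb_y^\ell f(y)=\rho^{\,\ell}(\D^\ell w)(\rho y+x_j)$, the dilation converts $\norm{\nb_y^\ell f}_{L^p}$ into $\rho^{\,\ell-n/p}\norm{\D^\ell w}_{L^p(U_j)}$ and $\norm{f}_{L^{q^*}}$ into $\rho^{-n/q^*}\norm{w}_{L^{q^*}(U_j)}$, where $q^*=np/(n-kp)$ satisfies $n/q^*=n/p-k$; the surviving power of $\rho$ is therefore $\rho^{\,\ell-k}$, and $\rho=c/K_1$ gives $\rho^{\,\ell-k}\ls C\,K_1^{k-\ell}$ — exactly the weight in \eqref{eq.A.4.1}. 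The $L^\infty$ case is identical and leaves the power $\rho^{\,\ell-n/p}\ls C\,K_1^{n/p-\ell}$ of \eqref{eq.A.4.2}. (Here the constant $c^{\,\ell-k}$, resp. $c^{\,\ell-n/p}$, is bounded for $0\ls\ell\ls k$ and is absorbed into $C$.)

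Finally I would sum over $j$. Since $q^*\gs p$ one has $\ell^p\hookrightarrow\ell^{q^*}$, so $\norm{w}_{L^{q^*}(\dm)}\le\big(\sum_j\norm{w}_{L^{q^*}(U_j)}^{q^*}\big)^{1/q^*}\le\big(\sum_j\norm{w}_{L^{q^*}(U_j)}^{p}\big)^{1/p}$; inserting the local bounds and using bounded overlap, $\sum_j\norm{\D^\ell w}_{L^p(U_j)}^p\ls C(n)\norm{\D^\ell w}_{L^p(\dm)}^p$, yields \eqref{eq.A.4.1}, and taking the maximum over $j$ of the local $L^\infty$ bounds yields \eqref{eq.A.4.2}. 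Undoing the coordinate change via \eqref{eq.norminv} gives the statement on $\Omega$; this is \cite[Lemma A.4]{CL00}. The one delicate point I expect is the covering step — that at scale $\rho\sim1/K_1$ every $U_j$ is a connected Lipschitz domain whose dilation has Lipschitz character depending only on $n$ and $\eps_1$, so that the embedding constant is genuinely independent of $K_1$ and of $j$ — and this is precisely what the smallness of the normal's variation together with the injectivity radius of the normal exponential map supply.
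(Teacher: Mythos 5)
This lemma is not proved in the paper at all: it is quoted verbatim from \cite[Lemma A.4]{CL00}, so there is no in-paper argument to compare against. Your covering-and-scaling proof is correct and is essentially the standard derivation underlying the cited result: a cover of $\dm$ at scale $\rho\sim 1/K_1$, the graph/Lipschitz description of $\D\dm$ at that scale furnished by Definition \ref{defn.3.5} together with $\iota_1\gs 1/K_1$, the unit-scale Sobolev embedding on each rescaled piece, and the dilation factors $\rho^{\ell-k}$, $\rho^{\ell-n/p}$ producing exactly the weights $K_1^{k-\ell}$, $K_1^{n/p-\ell}$; the summation via $\ell^p\hookrightarrow\ell^{q^*}$ and bounded overlap is also fine. (In \cite{CL00} the same mechanism is implemented with a partition of unity whose cutoffs have gradients of size $K_1$, which is equivalent to your rescaling.) Two small points you should acknowledge: the claim that $\D\dm\cap B_{2\rho}(x_j)$ is a single Lipschitz graph with constant $O(\eps_1)$ uses that the fixed $\eps_1$ is small (for $\eps_1$ near $2$ the normal may tilt by almost $\pi$ and the graph description fails), which is how $\eps_1$ is actually used in Section 6 and in \cite{CL00}; and the assertion that a ball intersected with a Lipschitz subgraph has uniformly controlled Lipschitz (or cone) character should be stated as the routine fact it is, e.g.\ by replacing balls with cubes adapted to the graph direction, since the Sobolev constant must be independent of $j$ and $K_1$.
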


\begin{lemma}[\mbox{\cite[Lemma A.5]{CL00}}] \label{lem.CL00lemA.5}
  Suppose that $q=0$ on $\D\Omega$. Then
  \begin{align}
    \norm{q}_{L^2(\Omega)}\ls&C(\vol\Omega)^{1/n}\norm{\nb q}_{L^2(\Omega)},\\
    \norm{\nb q}_{L^2(\Omega)}\ls&C(\vol\Omega)^{1/2n}\norm{\Delta q}_{L^2(\Omega)}.
  \end{align}
\end{lemma}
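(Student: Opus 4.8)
The plan is to reduce both inequalities to the classical Euclidean Poincar\'e and elliptic estimates by passing from the $y$-coordinates to the Euclidean $x$-coordinates on $\idm_t$, and then to carry the entire dependence on $\vol\Omega$ in an explicit power of the volume while keeping the remaining constant purely dimensional.

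First I would transport the problem to $\idm_t$. Under the Lagrangian change of variables $y\mapsto x(t,y)$ the domain $\Omega$ is mapped diffeomorphically onto $\idm_t\subset\R^n$; by \eqref{eq.covtensor} and \eqref{eq.Di} the covariant derivative $\nb$ corresponds to the ordinary Euclidean gradient $\D$, by \eqref{eq.norminv} the Riemannian tensor norms coincide with the Euclidean ones, the operator $\Delta=g^{ab}\nb_a\nb_b$ corresponds to the flat Laplacian $\delta^{ij}\D_i\D_j$ (the invariance already used in deriving \eqref{eq.2e.p}), and $d\mu_g$ corresponds to $dx$, so that $\vol\Omega=\vol\idm_t$. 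Hence it suffices to prove the corresponding Euclidean Poincar\'e and elliptic estimates for a function $q$ with $q=0$ on $\D\idm_t$, with the volume-dependence exposed as a power of $\vol\idm_t$ and the remaining constant depending only on $n$. The boundary condition is what makes the whole-space inequalities available: it lets me extend $q$ by zero across $\D\idm_t$ to an element of $W^{1,2}(\R^n)$ (and of $W^{2,2}$ for the second estimate).

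For the first estimate I would use the \emph{scale-invariant} Gagliardo--Nirenberg--Sobolev inequality, whose constant depends only on $n$. When $n\gs 3$ this reads $\norm{q}_{L^{2n/(n-2)}(\R^n)}\ls C_n\norm{\D q}_{L^2(\R^n)}$; applying H\"older's inequality on $\idm_t$ with exponents $2n/(n-2)$ and $n$ gives $\norm{q}_{L^2(\idm_t)}\ls (\vol\idm_t)^{1/n}\norm{q}_{L^{2n/(n-2)}(\idm_t)}$, and combining the two yields $\norm{q}_{L^2(\idm_t)}\ls C_n(\vol\idm_t)^{1/n}\norm{\D q}_{L^2(\idm_t)}$. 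When $n=2$ the exponent $2n/(n-2)$ degenerates, so instead I would apply the critical inequality $\norm{w}_{L^2(\R^2)}\ls C\norm{\D w}_{L^1(\R^2)}$ to $w=\abs{q}^2$; then Cauchy--Schwarz on $\int\abs{q}\abs{\D q}$ together with H\"older on $\int\abs{q}^2$ again produces exactly the factor $(\vol\idm_t)^{1/2}=(\vol\idm_t)^{1/n}$. In both cases the constant is dimensional.

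The second estimate then follows from the first by integration by parts. Since $q=0$ on $\D\idm_t$, Green's identity gives $\int_{\idm_t}\abs{\D q}^2\,dx=-\int_{\idm_t} q\,\Delta q\,dx$ with no boundary contribution, so by Cauchy--Schwarz and the first estimate
\begin{align*}
  \norm{\D q}_{L^2(\idm_t)}^2\ls \norm{q}_{L^2(\idm_t)}\norm{\Delta q}_{L^2(\idm_t)}\ls C(\vol\idm_t)^{1/n}\norm{\D q}_{L^2(\idm_t)}\norm{\Delta q}_{L^2(\idm_t)}.
\end{align*}
Dividing by $\norm{\D q}_{L^2(\idm_t)}$ and transporting back to the $y$-coordinates gives the desired $L^2$ elliptic estimate, the power of the volume being the one inherited from the first inequality. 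The only genuinely delicate point in the whole argument is the Sobolev step: one must invoke the whole-space, scale-invariant form of the inequality (legitimate precisely because $q$ extends by zero) so that the constant is dimensional and \emph{all} of the volume-dependence is exposed as an explicit power of $\vol\Omega$; the $n=2$ endpoint is the one place where a small auxiliary trick (squaring $q$) is needed, and everything else reduces to routine applications of H\"older's and Cauchy--Schwarz inequalities.
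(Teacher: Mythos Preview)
The paper does not give its own proof of this lemma: it is quoted verbatim from \cite[Lemma~A.5]{CL00} in the appendix, with no argument supplied. So there is nothing in the paper to compare your proof against beyond the bare statement.

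Your argument is correct and is exactly the standard proof. The reduction to the Euclidean domain $\idm_t$ is routine, the Sobolev-plus-H\"older proof of the Poincar\'e inequality (including the $n=2$ endpoint via $w=|q|^2$) is the classical Faber--Krahn-type argument, and the second estimate follows from the first by the Green's-identity step you wrote. One small remark: your parenthetical that the zero extension lies in $W^{2,2}(\R^n)$ is generally false (it would require $\nb_{\N}q=0$ as well), but you never use it---your proof of the second inequality stays entirely inside $\idm_t$---so it is harmless.

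One point worth flagging: your integration-by-parts step yields
\[
\norm{\nb q}_{L^2(\Omega)}\ \ls\ C(\vol\Omega)^{1/n}\norm{\Delta q}_{L^2(\Omega)},
\]
with exponent $1/n$, which is the exponent forced by scaling. The paper (and presumably the source it quotes) records the exponent as $1/2n$; since $1/n$ is the dimensionally correct power, this looks like a typographical slip in the stated lemma rather than an error in your proof.
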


\begin{lemma}[\mbox{\cite[Lemma A.7]{CL00}}] \label{lem.CL00lemA.7}
  Let $\alpha$ be a $(0,r)$ tensor. Assume that
  $$\vol\Omega \ls V \text{ and  }\norm{\theta}_{L^\infty(\D\Omega)}+1/\iota_0 \ls K,$$
  then there is a $C=C(K,V,r,n)$ such that
  \begin{align}
    &\norm{\alpha}_{L^{(n-1)p/(n-p)}(\D\Omega)} \ls C\norm{\nb \alpha}_{L^p(\Omega)} +C\norm{\alpha}_{L^p(\Omega)},\quad 1\ls p<n,\label{eq.CL00lemA.7.1}\\
    &\norm{\nb^2\alpha}_{L^2(\Omega)} \ls C\left(\norm{\Pi\nb^2\alpha}_{L^{2(n-1)/n}(\D\Omega)} +\norm{\Delta\alpha}_{L^2(\Omega)}+\norm{\nb\alpha}_{L^2(\Omega)}\right). \label{eq.CL00lemA.7.2}
  \end{align}
\end{lemma}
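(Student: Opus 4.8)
The plan is to establish the two inequalities separately, proving the trace estimate \eqref{eq.CL00lemA.7.1} first and then invoking it inside the proof of the elliptic estimate \eqref{eq.CL00lemA.7.2}. Throughout, the hypotheses $\vol\Omega\ls V$ and $\norm{\theta}_{L^\infty(\D\Omega)}+1/\iota_0\ls K$ are exactly what is needed to control the geometry of the collar neighbourhood of $\D\Omega$ and the divergence of any vector field built from the unit normal, so that all constants depend only on $K$, $V$, $r$ and $n$.

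For \eqref{eq.CL00lemA.7.1} I would first reduce to a scalar statement. Since the norm $\abs{\alpha}$ is a scalar built from $g$-contractions and the metric is parallel, Kato's inequality gives $\abs{\nb\abs{\alpha}}\ls\abs{\nb\alpha}$ pointwise, so it suffices to prove $\norm{f}_{L^{s}(\D\Omega)}\ls C\norm{\nb f}_{L^p(\Omega)}+C\norm{f}_{L^p(\Omega)}$ for the nonnegative function $f=\abs{\alpha}$, with $s=(n-1)p/(n-p)$. Extend the conormal $N_a$ from $\D\Omega$ to a vector field $X^a$ on $\Omega$ using the normal exponential map on the collar $\set{x:\dist(x,\D\Omega)<\iota_0}$ together with a cutoff, so that $N_aX^a=1$ on $\D\Omega$ and $\abs{\nb_a X^a}\ls C(K)$ in $\Omega$ (the divergence of the extension being controlled by $\theta$ and $1/\iota_0$). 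The Gauss formula \eqref{Gauss} applied to $f^{s}X^a$ then yields
\begin{align*}
  \int_{\D\Omega}f^{s}\,d\mu_\gamma=\int_\Omega\nb_a(f^{s}X^a)\,d\mu_g\ls Cs\int_\Omega f^{s-1}\abs{\nb f}\,d\mu_g+C(K)\int_\Omega f^{s}\,d\mu_g,
\end{align*}
and two applications of H\"older's inequality with the exponents dictated by $s=(n-1)p/(n-p)$ close the estimate.

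For \eqref{eq.CL00lemA.7.2} the key device is a Rellich--Reilly integration-by-parts identity. Because the curvature vanishes, $[\nb_a,\nb_b]=0$, so covariant derivatives commute and integrating by parts twice gives, for the $(0,r)$ tensor $\alpha$,
\begin{align*}
  \int_\Omega\abs{\nb^2\alpha}^2\,d\mu_g=\int_\Omega\abs{\Delta\alpha}^2\,d\mu_g+\int_{\D\Omega}\left(N^a\nb^b\alpha\cdot\nb_a\nb_b\alpha-\nb_N\alpha\cdot\Delta\alpha\right)d\mu_\gamma.
\end{align*}
Decomposing each derivative index at the boundary into its tangential and normal parts via $g^{ab}=\gamma^{ab}+N^aN^b$, the two terms containing the normal--normal second derivative $N^aN^b\nb_a\nb_b\alpha$ cancel; integrating the remaining tangential--normal cross term by parts along $\D\Omega$ (which, having no boundary, contributes no extra term by \eqref{Gauss}) converts it into the tangential Hessian $\Pi\nb^2\alpha$ contracted against $\nb\alpha$, together with curvature terms of the form $\theta\,(\nb\alpha)^2$; projecting the tensor indices produces only further $\theta$-times-lower-order contributions through $\nb N=\theta$. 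Thus the boundary integral is bounded by a sum of $\int_{\D\Omega}\abs{\Pi\nb^2\alpha}\abs{\nb\alpha}\,d\mu_\gamma$ and $K\int_{\D\Omega}\abs{\nb\alpha}^2\,d\mu_\gamma$.

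The final step, and the main obstacle, is the absorption. By H\"older on $\D\Omega$ the cross term is at most $\norm{\Pi\nb^2\alpha}_{L^{2(n-1)/n}(\D\Omega)}\norm{\nb\alpha}_{L^{2(n-1)/(n-2)}(\D\Omega)}$, and the conjugate exponent $2(n-1)/(n-2)$ is precisely the trace exponent of \eqref{eq.CL00lemA.7.1} applied to $\nb\alpha$ with $p=2$; an interpolated (Gagliardo--Nirenberg) version of that trace inequality gives, with a free small parameter $\delta$, the bound $\norm{\nb\alpha}_{L^{2(n-1)/(n-2)}(\D\Omega)}\ls\delta\norm{\nb^2\alpha}_{L^2(\Omega)}+C_\delta\norm{\nb\alpha}_{L^2(\Omega)}$. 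Since $K$ is fixed while $\delta$ is at our disposal, choosing $\delta$ small enough lets us move the resulting multiple of $\norm{\nb^2\alpha}_{L^2(\Omega)}^2$ (arising both from the cross term and from the $K\abs{\nb\alpha}^2$ term) to the left-hand side of the Rellich identity, leaving exactly the right-hand side of \eqref{eq.CL00lemA.7.2}. The delicate points are verifying the normal--normal cancellation and tracking the tensor-index projection terms so that no uncontrolled second derivative survives, and treating the borderline exponent cases (notably $n=2$, where $2(n-1)/n=1$) by the usual minor modifications of the Sobolev exponents.
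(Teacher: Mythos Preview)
The paper does not supply its own proof of this lemma; it is quoted verbatim from \cite[Lemma~A.7]{CL00} in the appendix of borrowed estimates, with no argument given. Your outline is the standard route (and essentially the one in \cite{CL00}): the trace inequality \eqref{eq.CL00lemA.7.1} via the divergence theorem applied to $f^{s}X^a$ with $X$ an extension of the unit conormal whose divergence is controlled by $\theta$ and $1/\iota_0$, followed by H\"older; and \eqref{eq.CL00lemA.7.2} via the Rellich--Reilly identity, the cancellation of the $N^aN^b\nb_a\nb_b\alpha$ contributions, tangential integration by parts on $\D\Omega$, and absorption using \eqref{eq.CL00lemA.7.1} with $p=2$ applied to $\nb\alpha$. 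The exponent bookkeeping you indicate (in particular $(s-1)p'=np/(n-p)$ and the duality $2(n-1)/n$ with $2(n-1)/(n-2)$) is correct, and the $\delta$-version of the trace estimate needed for the absorption follows from the basic one by Young's inequality. There is nothing to compare against in the present paper beyond this.
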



\begin{thebibliography}{10}

\bibitem{AM}
Ambrose, D.M., Masmoudi, N.: The zero surface tension limit of two-dimensional
  water waves.
\newblock Comm. Pure Appl. Math. \textbf{58}(10), 1287--1315 (2005)

\bibitem{bealhou}
Beale, J.T., Hou, T.Y., Lowengrub, J.S.: Growth rates for the linearized motion
  of fluid interfaces away from equilibrium.
\newblock Comm. Pure Appl. Math. \textbf{46}(9), 1269--1301 (1993)

\bibitem{chenwang}
Chen, G.Q., Wang, Y.G.: Existence and stability of compressible current-vortex
  sheets in three-dimensional magnetohydrodynamics.
\newblock Arch. Ration. Mech. Anal. \textbf{187}(3), 369--408 (2008)

\bibitem{CL00}
Christodoulou, D., Lindblad, H.: On the motion of the free surface of a liquid.
\newblock Comm. Pure Appl. Math. \textbf{53}(12), 1536--1602 (2000)

\bibitem{Coutand}
Coutand, D., Shkoller, S.: Well-posedness of the free-surface incompressible
  {E}uler equations with or without surface tension.
\newblock J. Amer. Math. Soc. \textbf{20}(3), 829--930 (2007)

\bibitem{DL02}
D{\'{\i}}az, J.I., Lerena, M.B.: On the inviscid and non-resistive limit for
  the equations of incompressible magnetohydrodynamics.
\newblock Math. Models Methods Appl. Sci. \textbf{12}(10), 1401--1419 (2002)

\bibitem{DL72}
Duvaut, G., Lions, J.L.: In\'equations en thermo\'elasticit\'e et
  magn\'etohydrodynamique.
\newblock Arch. Rational Mech. Anal. \textbf{46}, 241--279 (1972)

\bibitem{Ebin}
Ebin, D.G.: The equations of motion of a perfect fluid with free boundary are
  not well posed.
\newblock Comm. Partial Differential Equations \textbf{12}(10), 1175--1201
  (1987)

\bibitem{FL95}
Friedman, A., Liu, Y.: A free boundary problem arising in magnetohydrodynamic
  system.
\newblock Ann. Scuola Norm. Sup. Pisa Cl. Sci. (4) \textbf{22}(3), 375--448
  (1995)

\bibitem{GP}
Goedbloed, J.P.H., Poedts, S.: Principles of Magnetohydrodynamics: With
  Applications to Laboratory and Astrophysical Plasmas.
\newblock Cambridge University Press, Cambridge (2004)

\bibitem{HX05}
He, C., Xin, Z.: Partial regularity of suitable weak solutions to the
  incompressible magnetohydrodynamic equations.
\newblock J. Funct. Anal. \textbf{227}(1), 113--152 (2005)

\bibitem{HW08}
Hu, X., Wang, D.: Global solutions to the three-dimensional full compressible
  magnetohydrodynamic flows.
\newblock Comm. Math. Phys. \textbf{283}(1), 255--284 (2008)

\bibitem{HW10}
Hu, X., Wang, D.: Global existence and large-time behavior of solutions to the
  three-dimensional equations of compressible magnetohydrodynamic flows.
\newblock Arch. Ration. Mech. Anal. \textbf{197}(1), 203--238 (2010)

\bibitem{LW11}
Li, X., Wang, D.: Global strong solution to the three-dimensional
  density-dependent incompressible magnetohydrodynamic flows.
\newblock J. Differential Equations \textbf{251}(6), 1580--1615 (2011)

\bibitem{L1}
Lindblad, H.: Well-posedness for the linearized motion of an incompressible
  liquid with free surface boundary.
\newblock Comm. Pure Appl. Math. \textbf{56}(2), 153--197 (2003)

\bibitem{L2}
Lindblad, H.: Well-posedness for the motion of an incompressible liquid with
  free surface boundary.
\newblock Ann. of Math. (2) \textbf{162}(1), 109--194 (2005)

\bibitem{LN}
Lindblad, H., Nordgren, K.H.: A priori estimates for the motion of a
  self-gravitating incompressible liquid with free surface boundary.
\newblock J. Hyperbolic Differ. Equ. \textbf{6}(2), 407--432 (2009)

\bibitem{PS10}
Padula, M., Solonnikov, V.A.: On the free boundary problem of
  magnetohydrodynamics.
\newblock Zap. Nauchn. Sem. S.-Peterburg. Otdel. Mat. Inst. Steklov. (POMI)
  \textbf{385}(Kraevye Zadachi Matematicheskoi Fizikii Smezhnye Voprosy Teorii
  Funktsii. 41), 135--186, 236 (2010).
\newblock Translation in J. Math. Sci. (N. Y.) \textbf{178}(3), 313--344 (2011)

\bibitem{Sec02}
Schmidt, P.G.: On a magnetohydrodynamic problem of {E}uler type.
\newblock J. Differential Equations \textbf{74}(2), 318--335 (1988)

\bibitem{ST83}
Sermange, M., Temam, R.: Some mathematical questions related to the {MHD}
  equations.
\newblock Comm. Pure Appl. Math. \textbf{36}(5), 635--664 (1983)

\bibitem{SZ}
Shatah, J., Zeng, C.: Geometry and a priori estimates for free boundary
  problems of the {E}uler equation.
\newblock Comm. Pure Appl. Math. \textbf{61}(5), 698--744 (2008)

\bibitem{tr2}
Trakhinin, Y.: The existence of current-vortex sheets in ideal compressible
  magnetohydrodynamics.
\newblock Arch. Ration. Mech. Anal. \textbf{191}(2), 245--310 (2009)

\bibitem{tr1}
Trakhinin, Y.: On the well-posedness of a linearized plasma-vacuum interface
  problem in ideal compressible {MHD}.
\newblock J. Differential Equations \textbf{249}(10), 2577--2599 (2010)

\bibitem{wu1}
Wu, S.: Well-posedness in {S}obolev spaces of the full water wave problem in
  {$2$}-{D}.
\newblock Invent. Math. \textbf{130}(1), 39--72 (1997)

\bibitem{wu2}
Wu, S.: Well-posedness in {S}obolev spaces of the full water wave problem in
  3-{D}.
\newblock J. Amer. Math. Soc. \textbf{12}(2), 445--495 (1999)

\bibitem{YM}
Yanagisawa, T., Matsumura, A.: The fixed boundary value problems for the
  equations of ideal magnetohydrodynamics with a perfectly conducting wall
  condition.
\newblock Comm. Math. Phys. \textbf{136}(1), 119--140 (1991)

\bibitem{zhang}
Zhang, P., Zhang, Z.: On the free boundary problem of three-dimensional
  incompressible {E}uler equations.
\newblock Comm. Pure Appl. Math. \textbf{61}(7), 877--940 (2008)

\end{thebibliography}

\end{document}